\renewenvironment{tikzcd}{\begin{oldtikzcd}}{\end{oldtikzcd}\vspace{-.5cm}}
\newtheorem{theorem}{Theorem}[section]
\newtheorem{lemma}[theorem]{Lemma}
\newtheorem{proposition}[theorem]{Proposition}
\newtheorem{corollary}{Corollary}[theorem]
\theoremstyle{definition}
\newtheorem{definition}[theorem]{Definition}
\newtheorem{remark}[corollary]{Remark}
\newtheorem{notation}[theorem]{Notation}
\newtheorem{example}{Example}[section]
\newcommand{\N}{\mathbb{N}}
\newcommand{\Z}{\mathbb{Z}}
\newcommand{\PP}{\mathbb{P}}
\DeclareMathAlphabet{\mathpzc}{OT1}{pzc}{m}{it}
\newcommand{\cl}{\scalebox{1.2}{$\mathpzc{Cl}$}}
\newcommand{\rproj}{\scalebox{1.18}{$\mathpzc{Proj}$}}
\newcommand{\apl}[2]{#1 \! \rightarrow \! #2}
\newcommand{\overapl}[3]{#1 \! \overset{#3}{\rightarrow} \! #2}
\newcommand{\aplllarga}[3]{\begin{oldtikzcd}[column sep=2.5em, ampersand replacement=\&] 
                           #1 : #2 \rar \& #3 \end{oldtikzcd}}
\DeclareMathOperator{\bl}{bl}
\DeclareMathOperator{\Bl}{Bl}
\DeclareMathOperator{\Cl}{Cl}
\DeclareMathOperator{\s}{Spec}
\DeclareMathOperator{\Pic}{Pic}
\DeclareMathOperator{\Hilb}{Hilb}
\DeclareMathOperator{\Hom}{Hom}
\DeclareMathOperator{\Id}{Id}
\DeclareMathOperator{\im}{em}
\DeclareMathOperator{\Sch}{Sch}
\DeclareMathOperator{\Set}{Set}
\DeclareMathOperator{\adm}{adm}
\title{On the Universal Scheme of \(r\)-relative clusters 
  of a family}
\author{Pau Brustenga i Moncusí}
\begin{document}
\setlength{\abovedisplayskip}{5pt} \setlength{\belowdisplayskip}{5pt}
\setlength{\abovedisplayshortskip}{5pt} \setlength{\belowdisplayshortskip}{5pt}

\maketitle



\section*{Introduction}

Kleiman introduced in \citep{kleiman-multiple-point-1981} the iterated blowups,
varieties \(X_r\)
which naturally parametrize ordered clusters of \(r\)
(possibly infinitely near) points of a variety \(X\),
for each \(r\).
These have proved to be very useful in enumerative geometry, and in other areas
of algebraic geometry, especially in the case when \(X\)
is a surface.  See for instance \citep{kleiman-multiple-point-1981},
\citep{Kleiman-Enumerating-1999}, \citep{roe-conditions-2001},
\citep{roe-existence-2001}, \citep{de-poi-threefolds-2003},
\citep{Kleiman-Node-2004}, \citep{fernandez-de-bobadilla-moduli-2005},
\citep{Ran-Geometry-2005} and \citep{alberich-carraminana-enriques-2005}.\\

Following the general philosophy that Grothendieck expounds in his \'El\'ements
de G\'eometrie Alg\'ebrique'', given a scheme \(B\),
we generalise the concept of point cluster of a variety to the \(B\)-point
cluster of \(B\)-variety
\(\mathcal{S}\),
or relative clusters of a family of varieties \(\apl{\pi\!:\! \mathcal{S}}{B}\).
We define schemes \(\Cl_r\)
that generalise the iterated blow ups in the sense that they naturally
parametrize
clusters of \(r\)
\(B\)-points of a \(B\)-variety
\(\mathcal{S}\),
or \(r\)-relative
clusters of a family of varieties \(\apl{\pi\!:\! \mathcal{S}}{B}\),
and we prove their existence if \(B\) is
projective and \(\mathcal{S}\) quasiprojective (Theorem~\ref{thr:existence of Cl_r}).\\

Kleiman's varieties $X_r$ can be identified as our \(\Cl_r\)
for the particular ``absolute'' case when $B=\s K$.  In the case of surfaces
(especially $\PP^2$) these varieties have received a good deal of attention in
the literature for themselves (i.e., not just for applications) see for instance
\citep{harbourne-iterated-1988}, \citep{paxia-flat-1991} ---who does not consider
the possibility of infinitely near points --- \citep{roe-varieties-2004} or
\citep{kleiman-enriques-2011}.  Our approach, taking the point of view of
universal families and representable functors, is closer in spirit to that of
\citep{harbourne-iterated-1988}.  The relations between Kleiman varieties and
Hilbert schemes became a recurring theme which was clarified in
\citep{kleiman-enriques-2011}; it is natural to hope that the analogous forgetful
maps (eliminating the ordering of the \(B\)-points)
from the universal schemes \(\Cl_r\)
will be useful in the study of the relative Hilbert scheme
\(\Hilb_{\mathcal{S}/B}\).
Harbourne goes even further in the case of $\PP^2$, considering isomorphisms
between fibers of the universal family and the corresponding moduli problem,
which gives rise to a quotient stack.  We do not deal with these issues here.
\\

When \(B\) is regular an explicit construction of \(\Cl_{r+1}\)
is possible, in terms of iterated blow ups centred at suitable sheaves of ideals
with support at a subscheme of \(\Cl_r^2:=\Cl_r\times_{\Cl_{r-1}} \Cl_r\),
which fails to be Cartier only along the diagonal.  To this extent, the
construction is indeed analogous the one in \citep{kleiman-multiple-point-1981}
for the absolute case, which works by iteratively blowing up along diagonals; but
new phenomena appear in the relative case.  To be precise, we show that there is
an open subscheme \(V\subseteq \Cl_{r+1}\),
parametrizing the clusters of sections in which the image of the last section is
not contained in the exceptional divisor of the blow up of \(\Cl_r\), and a stratification
\(\apl{\sqcup_{p\in P} \Cl^p}{\Cl_r^2}\),
by locally closed subschemes, such that \(V\)
is isomorphic to a subset of strata \(\sqcup_{p\in\adm}\Cl^p\)
(Theorems~\ref{theorem of birrationaliti} and~\ref{the end theorem}).  For some
strata whose closure intersects the diagonal, the corresponding component of
\(\Cl_{r+1}\)
is (an open subset of) a blowup as above; since strata are often singular along
the diagonal, the centre of the blowup is not always uniquely determined (but the
blowup map is uniquely determined).  It also happens that infinitely near
\(B\)-points exist,
which are not limits of ``ordinary''  \(B\)-points.\\

In Section~\ref{examples} we show a few simple examples illustrating these new
phenomena, in the case of families of surfaces.


\renewcommand{\abstractname}{Acknowledgements}
\begin{abstract}
  I thank my thesis advisor, Joaquim Roé, for his great support.
\end{abstract}

\section{Preliminaries}\label{preliminaries}
On the whole article we fix an algebraically closed base field \(K\),
we consider all scheme a \(K\)-scheme and all morphism a morphism of \(K\)-schemes.

Let \(Y\)
be a scheme and \(y\in Y\)
a point. We denote \(k(y)\)
the residue field of \(y\)
and \(\apl{\im_y\!:\!\s(k(y))}{Y}\)
the embedding
of \(y\)
in \(Y\).
Given another scheme \(X\),
we denote \(X^y\)
the product \(\s(k(y))\times_{\s(K)} X\).
Given a morphism \(\apl{f\!:\! X}{Y}\)
and a subscheme \(W\subseteq Y\),
we denote \(f^{-1}(W)\)
the schematic pre-image (i.e. the pull back by \(f\)
of the inclusion \(\apl{W}{Y}\),
which is a subscheme of \(X\))
and \(X_y\) the fibre of \(f\) at \(y\) (i.e. \(X_y:=f^{-1}(y)\)).



Consider the following diagram. 
\[
  \begin{tikzcd}
    X  \rar{} \dar{}  & Y \rar{} \dar{}  & Z \dar{}   \\
    X' \rar{}  & Y' \rar{} & Z'  \\
  \end{tikzcd}
\]
We call it a \emph{Cartesian diagram} when both squares are Cartesian.  We use
the following fact without further reference: if the right hand square is
Cartesian then the left hand square is Cartesian if and only if so is the big one
obtained by composing the rows, see \citep[p.89]{hartshorne-algebraic-1977}.

\begin{definition}\label{def:loally-quasiprojective}
  We call a scheme which is a finite or countable disjoint union of
  quasiprojective schemes a \emph{piecewise quasiprojective} scheme.   
\end{definition}

\begin{lemma}\label{lem:imatge section is closed}
  Given a separated morphism \(\apl{\alpha\!:\!X}{Y}\),
  any section \(\apl{a\!:\!Y}{X}\) of \(\alpha\) is a closed embedding.
\end{lemma}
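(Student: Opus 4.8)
The plan is to realise the section \(a\) as a base change of the diagonal morphism \(\apl{\Delta_{X/Y}\!:\! X}{X\times_Y X}\). Since \(\alpha\) is separated, \(\Delta_{X/Y}\) is a closed embedding, and closed embeddings are stable under arbitrary base change; hence it suffices to fit \(a\) into a Cartesian square one of whose edges is \(\Delta_{X/Y}\).

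To set this up I would regard \(a\) as a morphism of \(Y\)-schemes, where \(Y\) carries the identity structure morphism and \(X\) carries \(\alpha\); the hypothesis \(\alpha\circ a=\Id_Y\) is exactly what makes \(a\) a morphism over \(Y\). The key input is the standard fact that the graph \(\Gamma_f=(\Id_Z,f)\) of an \(S\)-morphism \(\apl{f\!:\! Z}{W}\) fits into a Cartesian square whose remaining three edges are \(f\), \(\Delta_{W/S}\) and \(f\times_S\Id_W\); this is a one-line verification from the universal property of fibre products. Specialising to \(S=Z=Y\), \(W=X\) and \(f=a\), and using the canonical isomorphism \(Y\times_Y X\cong X\) (the second projection, an isomorphism because it is the base change of \(\Id_Y\)), the graph \(\Gamma_a\) is carried precisely to \(a\). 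Concretely, \(a\) then sits in the Cartesian square with vertices \(Y\), \(X\), \(X\), \(X\times_Y X\) in which both maps out of \(Y\) are \(a\), the bottom map is \(\Delta_{X/Y}\), and the remaining map \(\apl{X}{X\times_Y X}\) is \(x\mapsto(a(\alpha(x)),x)\); commutativity is immediate, and the Cartesian property holds because a point of the fibre product is the datum of an \(x\in X\) with \(x=a(\alpha(x))\), i.e.\ of a point of \(Y\).

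Granting this, \(a\) is a closed embedding because \(\Delta_{X/Y}\) is one and closed embeddings are stable under base change, which is the assertion of the lemma.

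I do not expect a genuine obstacle here; the only point requiring a little care is bookkeeping the identification \(Y\times_Y X\cong X\) so that the abstract graph morphism is correctly recognised as \(a\) and the right-hand vertical arrow is correctly computed. An alternative, more hands-on route would observe that \(a\) is automatically a monomorphism (it admits the retraction \(\alpha\)) and try to check the closed-embedding property affine-locally, but that demands more attention to the underlying topology and to finiteness hypotheses, whereas the diagonal argument above works verbatim for an arbitrary separated \(\alpha\).
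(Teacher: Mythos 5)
Your proof is correct and is essentially the paper's own argument: both realise \(a\) as the base change of the diagonal \(\apl{\Delta\!:\!X}{X\times_Y X}\) via the same Cartesian square with vertices \(Y\), \(X\), \(X\), \(X\times_Y X\), the right-hand vertical arrow being \(x\mapsto(a(\alpha(x)),x)\) (the paper writes it as \(\Id_X\times_Y(a\circ\alpha)\), i.e.\ with the factors swapped, which makes no difference). The extra discussion of graphs of \(Y\)-morphisms is just a slightly more systematic way of arriving at the identical square.
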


\begin{proof}
  It is straightforward to check that the diagram
  \[
    \begin{tikzcd}
      Y \arrow{r}{a} \dar[swap]{a}  &
      X \arrow{d}{\Id_{X}\times_Y (a\circ \alpha)} \\
      X \arrow[hook,swap]{r}{\Delta} & X \times_Y X \\
    \end{tikzcd}
  \]
  is Cartesian, where \(\Delta\)
  is the diagonal, which is a closed embedding by hypothesis. Therefore so is the
  morphism \(a\).
\end{proof}

\begin{lemma}\label{lem:first}
  Given the following diagram where the square is Cartesian,
  \[
    \begin{tikzcd}
      &  X' \ar{r}{\beta} \dar[swap]{g}  & Y' \ar{d}{f}  \\
      Y \rar[swap]{a}  &  X \rar[swap]{\alpha} & Y \\
    \end{tikzcd}
  \]
  if \(\apl{a \!:\!Y }{X }\)
  is a section of \(\alpha\)
  then there is a unique section \(\apl{b\!:\!Y'}{X'}\)
  of \(\beta\) that makes the following diagram Cartesian.
  \[
    \begin{tikzcd}
      Y' \ar[dashed]{r}{b} \ar{d}[swap]{f}  & X' \ar{d}{g} \\
      Y \ar{r}[swap]{a} & X \\
    \end{tikzcd}
  \]
  
\end{lemma}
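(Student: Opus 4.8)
The plan is to build \(b\) by hand from the universal property of the fibre product, and then to read off the Cartesian property from the pasting fact recalled above. Since the given square is Cartesian, \((X',g,\beta)\) is a fibre product of \(\overapl{X}{Y}{\alpha}\) and \(\overapl{Y'}{Y}{f}\). Hence, to define \(\apl{b\!:\!Y'}{X'}\) it suffices to prescribe a morphism \(Y'\to X\) and a morphism \(Y'\to Y'\) that agree after composition with \(\alpha\) and \(f\) respectively; I would take \(a\circ f\) and \(\Id_{Y'}\). The required compatibility is precisely \(\alpha\circ a\circ f = f = f\circ\Id_{Y'}\), which holds because \(a\) is a section of \(\alpha\). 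The universal property then produces a unique \(b\) with \(g\circ b = a\circ f\) and \(\beta\circ b = \Id_{Y'}\); the second equation says \(b\) is a section of \(\beta\), and the first says that the square in the statement commutes.

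For the uniqueness clause, I would note that any section \(b'\) of \(\beta\) making that square commute automatically satisfies \(\beta\circ b' = \Id_{Y'}\) and \(g\circ b' = a\circ f\), i.e.\ it has the same two components into \(X' = X\times_Y Y'\) as \(b\); the uniqueness part of the universal property then gives \(b' = b\). In particular this covers the (a priori weaker) statement that \(b\) is the unique section making the square Cartesian.

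It remains to check that the square is Cartesian. I would place it to the left of the given Cartesian square, forming
\[
\begin{tikzcd}
Y' \rar{b} \dar[swap]{f} & X' \rar{\beta} \dar{g} & Y' \dar{f} \\
Y \rar[swap]{a} & X \rar[swap]{\alpha} & Y
\end{tikzcd}
\]
This diagram commutes: the right-hand square does by hypothesis, and the left-hand one does because \(g\circ b = a\circ f\) by construction. Both rows of the outer rectangle are identities, since \(\beta\circ b = \Id_{Y'}\) and \(\alpha\circ a = \Id_Y\), so the outer rectangle is trivially Cartesian. As the right-hand square is Cartesian, the pasting fact recalled above forces the left-hand square to be Cartesian as well.

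The argument is entirely formal, so there is no serious obstacle; the only points that need attention are to verify that the new square actually commutes before invoking the pasting lemma, and to apply that lemma in the correct direction — with the hypothesised Cartesian square on the right and the (evidently Cartesian) identity rectangle as the composite.
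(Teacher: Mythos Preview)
Your proof is correct and follows essentially the same route as the paper's: both place the desired square next to the given Cartesian square and exploit the pasting fact together with the observation that the outer rectangle has identity rows (since \(\alpha\circ a=\Id_Y\)). The only cosmetic difference is that the paper first forms \(Z:=Y\times_X X'\) (so the left square is Cartesian by construction) and then reads off \(Z\cong Y'\) from the composite, whereas you first build \(b\) from the universal property of \(X'=X\times_Y Y'\) and then deduce the Cartesian property; the underlying argument is the same.
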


\begin{proof}
  Consider the following Cartesian diagram.
  \[
    \begin{tikzcd}
      Z \ar{r}  \ar{d} & X' \ar{d}[swap]{g}   \ar{r}{\beta} & Y' \ar{d}{f} \\
      Y \ar{r}[swap]{a} & X \ar{r}[swap]{\alpha}  & Y \\
    \end{tikzcd}
  \]
  Since the big square of the diagram is Cartesian, and obviously
  \(Y'\cong Y\times_{Y}Y'\), we have \(Z\cong Y'\) and the claim follows.
\end{proof}

We will use the above lemma to construct sections of a given morphism.

\begin{remark}\label{rmk:post_first_lema}
  Under the conditions of Lemma~\ref{lem:first}, if the morphism \(\alpha\)
  is separated then so is \(\beta\)
  and, by Lemma \ref{lem:imatge section is closed}, the morphisms \(a\)
  and \(b\) are closed embeddings.
\end{remark}

\begin{definition}
  Let \(\apl{\pi\!:\!\mathcal{S}}{B}\) be a morphism. A \emph{sections
    family} (or sf) of \(\pi\) is a couple \((Y,\rho)\) consisting of a scheme
  \(Y\) and a morphism \(\apl{\rho\!:\! Y\times B}{\mathcal{S}}\) such that the
  composition \(\pi\circ\rho\) is the projection over \(B\).

  Let \((X,\psi)\)
  be an sf of \(\pi\),
  we call \((X,\psi)\)
  a \emph{Universal sections family} (or Usf) of \(\pi\)
  if it satisfies the following universal property: for any sf \((Y,\rho)\)
  of \(\pi\)
  there is a unique morphism \(\apl{f\!:\!Y}{X}\)
  such that \(\rho=\psi\circ (f\times \Id_{B})\).
\end{definition}
If a Usf of a morphism \(\apl{\pi\!:\! \mathcal{S}}{B}\)
exists, by abstract nonsense it is unique up to unique isomorphism.\\

Given a morphism \(\apl{\pi\!:\!\mathcal{S}}{B}\)
there is the contravariant functor \(\apl{S.\_\!:\!  \Sch}{\Set}\)
corresponding to the parameter space problem of sections of \(\pi\)
defined as follows.  For every scheme \(Y\), let
\[
  S.Y:=\{\rho\in \Hom_{\Sch}(Y\times B,\mathcal{S}) \mbox{ such that } (Y,\rho)
  \mbox{ is an sf of }\pi\}
\]
and for every morphism \(\apl{f\!:\! Y'}{Y}\), let \(S.f:=(f\times
\Id_{B})^*\), see \citep[C.2.]{grothendieck-sb-II}.

\begin{definition}\label{def:associeted section}
  Given \((Y,\rho)\)
  an sf of a morphism \(\apl{\pi\!:\! \mathcal{S}}{B}\)
  and \(y\in Y\), slightly abusing language 
  we call the morphism
  \(\apl{\underline{y}:= S.\im_{y}(\rho) \!:\! B^{y}}{\mathcal{S}}\)
  the \emph{section associated to} \(y\).
\end{definition}

If \(y\)
is a closed point, since \(k(y)\cong K\) the morphism \(\underline{y}\) can be
identified with a section of \(\pi\).

\begin{proposition}\label{def:functor S._}
  A scheme \(X\)
  represents the functor \(S.\_\)
  associated to a morphism \(\apl{\pi \!:\!  \mathcal{S}}{ B}\)
  if and only if \(X\) is the Usf of \(\pi\).
\end{proposition}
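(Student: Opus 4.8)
The plan is to prove the equivalence by unwinding the definitions of the functor $S.\_$, of a Usf, and of what it means for a scheme to represent a functor, and then checking that the two universal properties are formally the same. Concretely, a scheme $X$ represents $S.\_$ precisely when there is a natural isomorphism $\eta \colon \Hom_{\Sch}(\_,X) \Rightarrow S.\_$; by the Yoneda lemma such an $\eta$ is determined by the image $\psi := \eta_X(\Id_X) \in S.X$, which by definition of $S.X$ is exactly a morphism $\apl{\psi\!:\!X\times B}{\mathcal{S}}$ making $(X,\psi)$ an sf of $\pi$. So the data of a representation is exactly the data of a distinguished sf $(X,\psi)$, and I only have to match the \emph{conditions}.

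First I would show: if $(X,\psi)$ is a Usf, then $\eta_Y \colon \Hom_{\Sch}(Y,X) \to S.Y$ sending $f \mapsto \psi \circ (f\times\Id_B)$ is a bijection for every $Y$. Surjectivity is the existence part of the universal property of the Usf (every sf $(Y,\rho)$ factors as $\rho = \psi\circ(f\times\Id_B)$), and injectivity is the uniqueness part. Naturality in $Y$ is a routine check: for $\apl{g\!:\!Y'}{Y}$ one has $\psi\circ((f\circ g)\times\Id_B) = \psi\circ(f\times\Id_B)\circ(g\times\Id_B) = (S.g)(\eta_Y(f))$, using that $\_\times B$ is a functor and the definition $S.g = (g\times\Id_B)^*$. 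Hence $X$ represents $S.\_$, with universal element $\psi$.

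Conversely, suppose $X$ represents $S.\_$ via $\eta$, and set $\psi := \eta_X(\Id_X)$. By Yoneda, $\eta_Y(f) = (S.f)(\psi) = \psi\circ(f\times\Id_B)$ for all $f \in \Hom_{\Sch}(Y,X)$; since $\psi \in S.X$, the pair $(X,\psi)$ is an sf of $\pi$. Now given any sf $(Y,\rho)$, i.e. $\rho \in S.Y$, the bijectivity of $\eta_Y$ produces a unique $\apl{f\!:\!Y}{X}$ with $\rho = \eta_Y(f) = \psi\circ(f\times\Id_B)$, which is precisely the universal property defining a Usf. Therefore $(X,\psi)$ is the Usf of $\pi$.

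There is no serious obstacle here; the only point that needs a little care is the bookkeeping of the Yoneda identification — making sure the universal element $\psi=\eta_X(\Id_X)$ really does lie in the subset $S.X \subseteq \Hom_{\Sch}(X\times B,\mathcal{S})$ (it does, since $\eta_Y$ lands in $S.Y$ by definition of the functor) and that the formula $\eta_Y(f)=\psi\circ(f\times\Id_B)$ is exactly the compatibility $\rho = \psi\circ(f\times\Id_B)$ appearing in the definition of Usf. Once these identifications are spelled out, both implications are immediate. I would also remark, as the text already does, that uniqueness up to unique isomorphism of the Usf is then just the usual uniqueness of a representing object.
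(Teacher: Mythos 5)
Your proof is correct and follows essentially the same route as the paper's (sketched) proof: both directions reduce to the Yoneda identification of a representation with the universal element $\psi$ and the observation that bijectivity of $\eta_Y$ is exactly the existence-and-uniqueness clause in the definition of the Usf. The only cosmetic difference is that you orient the natural isomorphism as $\Hom_{\Sch}(\_,X)\Rightarrow S.\_$ while the paper uses the inverse direction.
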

\begin{proof}
  We just sketch the proof.  If \((X,\psi)\)
  is the Usf of \(\pi\)
  then we can construct a natural isomorphism
  \(\apl{\mu\!:\! S.\_}{\Hom_{\Sch}(\_,X)}\)
  as follows.  Given a scheme \(Y\),
  for each \(\rho\in S.Y\),
  by the universal property of \((X,\psi)\),
  we define \(\mu_Y(\rho)\in \Hom_{\Sch}(Y,X)\)
  as the unique morphism such that \(\rho=S.(\mu_Y(\rho))(\psi)\).
  
  If \(X\)
  represents \(S.\_\),
  we fix a natural isomorphism \(\apl{\eta\!:\! S.\_}{\Hom_{\Sch}(\_,X)}\).
  If \(\psi:=\eta_X^{-1}(\Id_X)\in S.X\),
  then the couple \((X,\psi)\) is the Usf of \(\pi\).
\end{proof}

\begin{corollary}\label{cor:the Usf has just all sections of the morphism}
  If the Usf \((X,\psi)\)
  of a morphism \(\apl{\pi\!:\! \mathcal{S}}{B}\) exists, the map
  \(\apl{\sigma}{\underline{\sigma}}\) from the closed points of \(X\) to
  the set of sections of \(\pi\) is bijective.
\end{corollary}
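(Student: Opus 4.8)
\section*{Proof proposal for Corollary~\ref{cor:the Usf has just all sections of the morphism}}

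The plan is to derive the statement from Proposition~\ref{def:functor S._} by evaluating the functor \(S.\_\) at the one–point scheme \(\s K\). The first observation is that \(S.(\s K)\) is, canonically, the set of sections of \(\pi\): since \(\s K\times B\cong B\) as \(K\)-schemes, a morphism \(\apl{\rho\!:\!\s K\times B}{\mathcal S}\) whose composition with \(\pi\) is the projection to \(B\) is precisely the datum of a morphism \(\apl{B}{\mathcal S}\) splitting \(\pi\). So it suffices to produce a bijection \(S.(\s K)\cong\{\text{closed points of }X\}\) and to recognise it as the map \(\apl{\sigma}{\underline\sigma}\).

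By Proposition~\ref{def:functor S._}, \(X\) represents \(S.\_\); fix a natural isomorphism \(\apl{\eta\!:\! S.\_}{\Hom_{\Sch}(\_,X)}\) and set \(\psi:=\eta_X^{-1}(\Id_X)\), as in its proof. Applying naturality of \(\eta\) to an arbitrary morphism \(\apl{g\!:\!Y}{X}\) and to the element \(\psi\in S.X\) gives \(\eta_Y(S.g(\psi))=\eta_X(\psi)\circ g=g\); equivalently, the inverse bijection is \(\eta_Y^{-1}(g)=S.g(\psi)=\psi\circ(g\times\Id_B)\). Specialising to \(Y=\s K\) yields a bijection between \(\Hom_{\Sch}(\s K,X)\) and \(S.(\s K)\) carrying a morphism \(\apl{\im_\sigma\!:\!\s K}{X}\) — that is, a point \(\sigma\) with \(k(\sigma)\cong K\) — to \(S.\im_\sigma(\psi)\), which by Definition~\ref{def:associeted section} is exactly the section \(\underline\sigma\) associated to \(\sigma\). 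Under the two identifications above, this bijection \emph{is} the map of the statement.

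It remains to match \(\Hom_{\Sch}(\s K,X)\) with the closed points of \(X\): a \(K\)-morphism \(\s K\to X\) picks out a point with residue field \(K\) together with a \(K\)-algebra map \(k(\sigma)\to K\), necessarily the identity of \(K\), while conversely every closed point has residue field \(K\) (as already used in the Remark preceding the statement). I expect this last identification to be the only slightly delicate point; it is automatic whenever \(X\) is Jacobson — in particular of finite type over \(K\), the setting of the later sections — so I would simply invoke it, everything else being bookkeeping with the Yoneda isomorphism. Alternatively one can bypass Proposition~\ref{def:functor S._} entirely: surjectivity of \(\apl{\sigma}{\underline\sigma}\) is the existence clause of the universal property of the Usf applied to the sf \((\s K,s)\) attached to a given section \(s\), and injectivity is its uniqueness clause.
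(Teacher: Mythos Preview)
Your proposal is correct and follows exactly the route the paper intends: the corollary is stated without proof, as an immediate consequence of Proposition~\ref{def:functor S._} evaluated at \(\s K\), and you have spelled out precisely this argument. Your flagging of the identification of \(\Hom_{\Sch}(\s K,X)\) with the closed points of \(X\) is more careful than the paper itself, which simply asserts (in the remark preceding the corollary) that closed points have residue field \(K\); as you note, this is unproblematic in the locally-finite-type situation where the corollary is actually used.
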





\begin{theorem}[Grothendieck]\label{thm:exist-Usf}
  Given a morphism \(\apl{\pi\!:\!\mathcal{S}}{B}\),
  if \(B\)
  is proper and \(\mathcal{S}\)
  is piecewise quasiprojective, then the Usf \((X,\psi)\)
  of \(\pi\)
  exists and the scheme \(X\)
  is piecewise quasiprojective.
\end{theorem}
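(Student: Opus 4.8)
The plan is to recast the statement as a representability question and then invoke Grothendieck's existence theorem for schemes of sections. By Proposition~\ref{def:functor S._}, a scheme represents the functor \(S.\_\) if and only if it is the Usf of \(\pi\); so it suffices to prove that \(S.\_\) is representable by a piecewise quasiprojective scheme \(X\), and then \(X\), equipped with the section \(\psi\) corresponding to \(\Id_X\) under the resulting natural isomorphism \(S.\_\cong\Hom_{\Sch}(\_,X)\), is the Usf. I would first reduce to the case in which \(\mathcal{S}\) is quasiprojective, and then recognise \(S.\_\) as the functor of sections of a quasiprojective morphism over a proper base.

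\emph{Reduction to \(\mathcal{S}\) quasiprojective.} Being proper over \(K\), the scheme \(B\) is of finite type, hence Noetherian, hence a finite disjoint union \(B=\bigsqcup_{j=1}^{m}B_j\) of its connected components (we may assume \(B\neq\varnothing\), since otherwise \(S.\_\) is represented by \(\s K\)). A morphism \(Y\times B\to\mathcal{S}\) over \(B\) is the same datum as, for each \(j\), a morphism \(Y\times B_j\to\pi^{-1}(B_j)\) over \(B_j\); hence \(S.\_\) is the finite product of the section functors of the morphisms \(\pi^{-1}(B_j)\to B_j\), and if the \(j\)-th of these is represented by a scheme \(X^{(j)}\), the product is represented by \(X^{(1)}\times_{\s K}\cdots\times_{\s K}X^{(m)}\). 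As a finite product of piecewise quasiprojective schemes is again piecewise quasiprojective, I may assume \(B\) connected. Now write \(\mathcal{S}=\bigsqcup_{i\in I}\mathcal{S}_i\) with \(I\) at most countable and each \(\mathcal{S}_i\) quasiprojective. For any sf \((Y,\rho)\) the subscheme \(\rho^{-1}(\mathcal{S}_i)\) is open and closed in \(Y\times B\), and since each slice \(\{y\}\times B\) is connected it is mapped into a single \(\mathcal{S}_i\); hence \(\rho^{-1}(\mathcal{S}_i)\) is saturated under the projection \(Y\times B\to Y\), which is a closed surjection because \(B\to\s K\) is universally closed and \(B\neq\varnothing\). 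Consequently \(\rho^{-1}(\mathcal{S}_i)=Y_i\times B\) for an open-and-closed partition \(Y=\bigsqcup_{i\in I}Y_i\) with \(\rho(Y_i\times B)\subseteq\mathcal{S}_i\), so \(\bigsqcup_{i\in I}X_i\) represents \(S.\_\) as soon as the section functor of each \(\mathcal{S}_i\to B\) is represented by a scheme \(X_i\). Since a countable disjoint union of piecewise quasiprojective schemes is piecewise quasiprojective, this reduces the problem to the case \(\mathcal{S}\) quasiprojective.

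\emph{The quasiprojective case.} With \(\mathcal{S}\) quasiprojective and \(B\) proper, the functor \(S.\_\) is exactly the functor of sections of \(\pi\), and its representability by a piecewise quasiprojective scheme \(X\) follows from Grothendieck's construction \citep[C.2.]{grothendieck-sb-II}; the hypotheses needed there — \(B\to\s K\) proper (hence, over a field, flat) and \(\mathcal{S}\to B\) quasiprojective — hold here. Concretely I would exhibit \(S.\_\) as the fibre
\[
  S.\_\;\cong\;\underline{\Hom}_K(B,\mathcal{S})\ \times_{\underline{\Hom}_K(B,B)}\ \s K
\]
of the natural transformation induced by postcomposition with \(\pi\), taken over the \(K\)-point of \(\underline{\Hom}_K(B,B)\) determined by \(\Id_B\), where \(\underline{\Hom}_K(Z,W)\) denotes the functor \(Y\mapsto\Hom_{\Sch}(Y\times Z,W)\). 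The two Hom-functors are representable by piecewise quasiprojective schemes — open subschemes of countable disjoint unions of Hilbert schemes — the second one moreover separated; hence their fibre product exists, represents \(S.\_\), and is piecewise quasiprojective, being a closed subscheme of \(\underline{\Hom}_K(B,\mathcal{S})\).

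\emph{Main obstacle.} The real weight of the proof rests on the cited construction. What remains to be checked on our side is, first, that the displayed fibre product genuinely has the sections families \((Y,\rho)\) as its \(Y\)-points — this requires unwinding what the \(K\)-point \(\Id_B\) of \(\underline{\Hom}_K(B,B)\) pulls back to — and, second, the somewhat lengthy but purely formal bookkeeping that ``piecewise quasiprojective'' is stable under all the operations invoked above: forming Hilbert schemes of quasiprojective schemes, passing to open and to closed subschemes, and taking finite products, countable disjoint unions, and fibre products over separated bases. Each of these preserves the property, provided one checks that the indexing sets stay at most countable and the individual pieces stay quasiprojective; I expect this bookkeeping — not any geometric subtlety — to be the only real obstacle.
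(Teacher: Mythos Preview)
Your approach is correct and essentially the same as the paper's: decompose \(\mathcal{S}\) into its quasiprojective pieces \(\mathcal{S}_i\), cite Grothendieck \citep[4.c]{grothendieck-sb-4-hilbert} for the Usf \((X_i,\psi_i)\) of each \(\apl{\pi|_{\mathcal{S}_i}\!:\!\mathcal{S}_i}{B}\), and take \(X=\bigsqcup_i X_i\). Your preliminary reduction to \(B\) connected is absent from the paper's very terse argument but is actually needed for \(\bigsqcup_i X_i\) to represent \(S.\_\) --- without it a single section family can land in different \(\mathcal{S}_i\) over different components of \(B\) --- so your write-up is the more careful of the two.
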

\begin{proof}
  We consider the scheme \(\mathcal{S}\) as the finite or countable disjoint union of
  quasiprojective schemes \(\bigsqcup_{i\in I}\mathcal{S}_i\). For each \(i\in I\)
  the Usf \((X_i,\psi_i)\) of \(\apl{\pi|_{\mathcal{S}_i}\!:\!\mathcal{S}_i}{B}\)
  exists, see~\citep[4.c]{grothendieck-sb-4-hilbert} and \(X_i\) is piecewise
  quasiprojective. So, \(X:=\bigsqcup_{i\in
    I}X_i\) and the morphism \(\psi:= \bigsqcup_{i\in I} \psi_i\circ \theta\) are
  the Usf of \(\pi\), where \(\theta\) is the isomorphism from  \(X\times B\) to \(\bigsqcup_{i\in I}(X_i\times B)\). 
\end{proof}
For an alternative exposition to~\citep[4.c]{grothendieck-sb-4-hilbert},
see~\citep[\nth{2} ex. after Th.6.6]{nitsure-construction-2005}.\\

The rest of this section is devoted to a general construction that appears on
some proofs. It is not our main object of study.


\begin{definition}\label{def:W-ssf}
  Let \(W\) be a scheme. Let \(\apl{\pi\!:\! \mathcal{S}}{B}\) be a morphism with
  \(\mathcal{S}\) a \(W\)-scheme and \(\apl{\pi'\!:\! \mathcal{S}}{W\times B}\)
  the product morphism of \(\apl{\pi\!:\!\mathcal{S}}{B}\)
  and \(\apl{\mathcal{S}}{W}\).
  Given \(\overapl{Y}{W}{f}\)
  a \(W\)-scheme
  and a sf \(\apl{\rho\!:\! Y\times B}{\mathcal{S}}\)
  of \(\pi\),
  we call the triplet \((Y,\rho,f)\)
  a \emph{\(W\)-split
    section family} (or \(W\)-ssf)
  of \(\pi'\)
  if the morphism \(\rho\)
  is a \(W\)-morphism.
  (Here \(B\)
  is not assumed to be a \(W\)-scheme,
  but \(Y\times B\) is a \(W\)-scheme via the first projection.)

  Let \((X,\alpha,g)\)
  be a \(W\)-ssf
  of \(\pi'\),
  we call \((X,\alpha,g)\)
  a \emph{Universal \(W\)-split
    sections family} (or \(W\)-Ussf)
  of \(\pi'\)
  if it satisfies the following universal property: for any \(W\)-ssf
  \((Y,\rho,f)\)
  of \(\pi'\)
  there is a unique morphism \(\apl{G\!:\! Y}{X}\)
  such that \(\rho=\alpha\circ (G\times \Id_{B})\).
\end{definition}
It is straightforward to check that in this case the condition
\(f=g\circ G\) is satisfied.
If a \(W\)-Ussf
of \(\pi'\) exists, by abstract nonsense it is unique up to unique isomorphism.

\begin{remark}\label{rmk:equivalent-def-of-ssf}
  In the Definition~\ref{def:W-ssf} the morphism \(\rho\)
  is a \(W\)-morphism if and only if \(\pi'\circ\rho=f\times \Id_B\)
\end{remark}

\begin{lemma}\label{lem:triple-uni-property}
  Let \(W\) be a scheme. Let \(\apl{\pi\!:\! \mathcal{S}}{B}\) be a morphism with
  \(\mathcal{S}\) a \(W\)-scheme and \(\apl{\pi'\!:\! \mathcal{S}}{W\times B}\)
  the product morphism of \(\apl{\pi\!:\!\mathcal{S}}{B}\)
  and \(\apl{\mathcal{S}}{W}\).
  If the Usf \((X,\psi)\)
  of \(\pi\)
  and the Usf \((BW,\varphi_W)\)
  of the projection \(\apl{W\times B}{B}\)
  exist, then the \(W\)-Ussf
  of \(\pi'\) exists too.
\end{lemma}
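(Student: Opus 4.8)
The plan is to realise the \(W\)-Ussf of \(\pi'\) as a fibre product built from \(X\), \(W\) and the scheme \(BW\). Throughout, write \(\apl{p\!:\!W\times B}{B}\) for the projection and \(\apl{q\!:\!\mathcal{S}}{W}\) for the structure morphism of the \(W\)-scheme \(\mathcal{S}\), so that \(\pi'\) has \(W\)-component \(q\) and \(B\)-component \(\pi\).

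First I would produce two morphisms into \(BW\). Since \(\pi\circ\psi\) is the projection \(\apl{X\times B}{B}\), the composite \(\pi'\circ\psi\) is a morphism over \(B\), so \((X,\pi'\circ\psi)\) is an sf of \(p\); by the universal property of \((BW,\varphi_W)\) there is a unique \(\apl{g\!:\!X}{BW}\) with \(\pi'\circ\psi=\varphi_W\circ(g\times\Id_B)\). Likewise \((W,\Id_{W\times B})\) is an sf of \(p\), so there is a unique \(\apl{\iota\!:\!W}{BW}\) with \(\varphi_W\circ(\iota\times\Id_B)=\Id_{W\times B}\); one should think of \(\iota\) as classifying the sections that are ``constant along \(B\)''.

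Next I would set \(X':=X\times_{BW}W\), the fibre product along \(g\) and \(\iota\), with projections \(\apl{h\!:\!X'}{X}\) and \(\apl{g'\!:\!X'}{W}\), and put \(\alpha:=\psi\circ(h\times\Id_B)\). Then \((X',\alpha)\) is visibly an sf of \(\pi\), and \(\alpha\) is a \(W\)-morphism because \(\pi'\circ\alpha=\varphi_W\circ((g\circ h)\times\Id_B)=\varphi_W\circ((\iota\circ g')\times\Id_B)=g'\times\Id_B\), using \(g\circ h=\iota\circ g'\) and \(\varphi_W\circ(\iota\times\Id_B)=\Id_{W\times B}\); by Remark~\ref{rmk:equivalent-def-of-ssf}, \((X',\alpha,g')\) is a \(W\)-ssf of \(\pi'\). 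To verify its universal property, take any \(W\)-ssf \((Y,\rho,f)\) of \(\pi'\): then \((Y,\rho)\) is an sf of \(\pi\), giving a unique \(\apl{\bar h\!:\!Y}{X}\) with \(\rho=\psi\circ(\bar h\times\Id_B)\); composing with \(\pi'\) and using \(\pi'\circ\rho=f\times\Id_B\) yields \(\varphi_W\circ((g\circ\bar h)\times\Id_B)=\varphi_W\circ((\iota\circ f)\times\Id_B)\), hence \(g\circ\bar h=\iota\circ f\) by uniqueness in the universal property of \((BW,\varphi_W)\). Consequently \((\bar h,f)\) induces a unique \(\apl{G\!:\!Y}{X'}\) with \(h\circ G=\bar h\) and \(g'\circ G=f\), and \(\rho=\psi\circ(\bar h\times\Id_B)=\alpha\circ(G\times\Id_B)\). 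Uniqueness of a morphism \(G\) with \(\rho=\alpha\circ(G\times\Id_B)\) then follows from uniqueness of \(\bar h\) (universal property of \((X,\psi)\)) together with the fact, recorded after Definition~\ref{def:W-ssf}, that the \(W\)-structure of a \(W\)-ssf is determined by its section, which forces \(g'\circ G=f\).

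The only genuinely non-mechanical point is the construction in the second paragraph: seeing that besides the ``classifying'' morphism \(\apl{g\!:\!X}{BW}\) one must also introduce the ``constant section'' morphism \(\apl{\iota\!:\!W}{BW}\), and that the \(W\)-Ussf is precisely the fibre product \(X\times_{BW}W\) with its second projection as \(W\)-structure. After that, the verifications are diagram chases that invoke the universal properties of \((X,\psi)\) and of \((BW,\varphi_W)\) once each, plus the elementary remark after Definition~\ref{def:W-ssf}; the only care needed is in using that remark to pin down the \(W\)-component of the induced morphism \(G\).
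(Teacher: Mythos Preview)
Your proof is correct and follows essentially the same approach as the paper: the paper constructs the same morphisms \(h=g\) and \(i=\iota\), then defines the \(W\)-Ussf as \(Z:=h^{-1}(W)\), which is exactly your fibre product \(X\times_{BW}W\). The only cosmetic differences are that the paper first shows \(i\) is a closed embedding (so that \(h^{-1}(W)\) is literally a subscheme of \(X\))---a step your fibre-product formulation makes unnecessary---and leaves the universal property as ``straightforward to check'', whereas you spell it out.
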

\begin{remark}
  The Usf \((BW,\varphi_W)\) and the scheme of morphisms from \(B\)
  to \(W\) represent isomorphic functors, see \citep[C.2.]{grothendieck-sb-II} or
  \citep[4.c]{grothendieck-sb-4-hilbert}. So its existence is equivalent, and if
  they exist they are isomorphic.
\end{remark}
\begin{proof}
  The couple \((X,\pi'\circ\psi)\)
  is an sf of the projection \(\apl{W\times B}{B}\).
  By the universal property of \((BW,\varphi_W)\)
  there is a unique morphism \(\apl{h\!:\! X}{BW}\)
  such that \(\pi'\circ\psi=\varphi_W\circ(h\times \Id_B)\).

  The couple \((W,\Id_{(W\times B)})\)
  is an sf of \(\apl{W\times B}{W}\).
  By the universal property of \((BW,\varphi_W)\)
  there is a unique morphism \(\apl{i\!:\! W}{BW}\)
  such that \(\Id_{(W\times B)}=\varphi_W\circ(i\times \Id_B)\).
  Given a closed \(t\in B\)
  we define the morphism \(\apl{\lambda_t \!:\! BW }{W }\)
  as the composition of the morphism
  \(\apl{\varphi_W\circ(\Id_{BW}\times \im_t)\!:\!BW\cong BW\times k(t)}{W\times
    B}\) with \(\apl{W\times B}{W}\).  By definition \(\lambda_t\circ i=\Id_W\),
  so \(i\)
  is a closed embedding, because it is a section of \(\lambda_t\)
  which is separated and surjective, and we consider \(W\) as a subscheme of \(BW\).

  Now, if \(Z := h^{-1}(W)\),
  it is straightforward to check that the triplet
  \(\left( Z,\psi|_{(Z\times B)},h|_Z \right)\) is the \(W\)-Ussf of \(\pi'\).
\end{proof}


\begin{corollary}\label{thm:exist-W-Ussf}
  If \(W\)
  and \(\mathcal{S}\)
  are piecewise quasiprojective and \(B\)
  is projective then the \(W\)-Ussf
  \((X,\alpha,g)\)
  of \(\pi'\)
  exists and \(X\)
  is piecewise quasiprojective.
\end{corollary}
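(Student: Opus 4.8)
The plan is to derive this from Theorem~\ref{thm:exist-Usf} and Lemma~\ref{lem:triple-uni-property}, checking along the way that the piecewise-quasiprojectivity hypotheses are preserved by the constructions involved.

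First I would note that a projective scheme is proper, so Theorem~\ref{thm:exist-Usf} applies to \(\apl{\pi\!:\!\mathcal{S}}{B}\) and produces the Usf \((X_\pi,\psi)\) of \(\pi\), with \(X_\pi\) piecewise quasiprojective. To invoke Lemma~\ref{lem:triple-uni-property} I also need the Usf \((BW,\varphi_W)\) of the projection \(\apl{W\times B}{B}\). Writing \(W=\bigsqcup_{j\in J}W_j\) as a finite or countable disjoint union of quasiprojective schemes (Definition~\ref{def:loally-quasiprojective}), we have \(W\times B=\bigsqcup_{j\in J}(W_j\times B)\), and each \(W_j\times B\) is quasiprojective, since a product of a quasiprojective scheme with the projective scheme \(B\) embeds as a locally closed subscheme of a projective space via a Segre embedding. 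Hence \(W\times B\) is piecewise quasiprojective, and a second application of Theorem~\ref{thm:exist-Usf} (again using that \(B\) is proper) yields the Usf \((BW,\varphi_W)\) with \(BW\) piecewise quasiprojective. By Lemma~\ref{lem:triple-uni-property} the \(W\)-Ussf \((X,\alpha,g)\) of \(\pi'\) now exists.

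It remains to see that \(X\) is piecewise quasiprojective. For this I would trace through the proof of Lemma~\ref{lem:triple-uni-property}: the \(W\)-Ussf obtained there is \(\bigl(h^{-1}(W),\,\psi|_{h^{-1}(W)\times B},\,h|_{h^{-1}(W)}\bigr)\) for a certain morphism \(\apl{h\!:\!X_\pi}{BW}\), where \(W\) is viewed as a closed subscheme of \(BW\) (it is the image of the closed embedding \(i\) constructed in that proof, which is a section of the separated morphism \(\lambda_t\) and hence closed by Lemma~\ref{lem:imatge section is closed}). Thus \(X=h^{-1}(W)\) is a closed subscheme of \(X_\pi\). Since a closed subscheme of a quasiprojective scheme is quasiprojective, a closed subscheme of a piecewise quasiprojective scheme is again piecewise quasiprojective; applying this to \(X\subseteq X_\pi\) gives the claim.

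I do not anticipate a real obstacle: the statement is essentially a repackaging of Theorem~\ref{thm:exist-Usf} and Lemma~\ref{lem:triple-uni-property}. The only points that need care are purely formal — that projective implies proper so the existence theorem applies, and that the class of piecewise quasiprojective schemes is closed under the two operations used here, namely taking a product with the projective scheme \(B\) and passing to a closed subscheme.
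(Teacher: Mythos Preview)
Your proof is correct and follows essentially the same route as the paper: verify that \(W\times B\) is piecewise quasiprojective so that Theorem~\ref{thm:exist-Usf} supplies both Usf's needed to invoke Lemma~\ref{lem:triple-uni-property}. The paper's proof is terser and leaves the piecewise quasiprojectivity of \(X\) implicit in the construction; your explicit tracing through the proof of Lemma~\ref{lem:triple-uni-property} to identify \(X\) as a closed subscheme of \(X_\pi\) is a welcome clarification rather than a different argument.
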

\begin{proof}
  The scheme \(W\times B\) is piecewise quasiprojective. So by
  Theorem~\ref{thm:exist-Usf} the hypotheses of 
   Lemma~\ref{lem:triple-uni-property} are satisfied and the claim follows.
\end{proof}
\begin{definition}
  Let \(W\) be a scheme. Let \(\apl{\pi\!:\! \mathcal{S}}{B}\) be a morphism with
  \(\mathcal{S}\) a \(W\)-scheme and \(\apl{\pi'\!:\! \mathcal{S}}{W\times B}\)
  the product morphism of \(\apl{\pi\!:\!\mathcal{S}}{B}\)
  and \(\apl{\mathcal{S}}{W}\).
  Given \((Y,\rho,f)\)
  a \(W\)-ssf
  of \(\pi'\)
  and \(y\in Y\),
  we denote \(\mathcal{S}_{f(y)}\)
  the fibre at \(f(y)\in W\)
  of the morphism \(\apl{\mathcal{S}}{W}\).
  Then the morphism \(\apl{\underline{y} \!:\! B^y }{\mathcal{S}}\)
  has image contained in \(\mathcal{S}_{f(y)}\)
  and we call its corestriction \(\apl{\tilde{y}\!:\! B^y }{\mathcal{S}_{f(y)}}\)
  the \(W\)-\emph{split section associated to} \(y\).
\end{definition}


\section{Clusters of sections}\label{Clusters of sections}

\begin{notation}\label{def:a-ascension}
  Let \(\apl{a\!:\!Y}{X}\)
  be a closed embedding corresponding to a finite type sheaf of ideals (this is
  always the case if \(a\) is a closed embedding and \(X\) is locally Noetherian). We denote \(\apl{\bl_a\!:\!\Bl_a(X)}{X}\)
  the blow up of \(X\) at \(a(Y)\).
\end{notation}

\begin{definition}
  We call a \emph{family} a separated surjective flat morphism \(\apl{\pi\!:\! \mathcal{S}}{B}\)
  of finite type where \(B\) is of finite type and irreducible and the
  generic fibre is integral.
\end{definition}

For this section we fix a family \(\apl{\pi\!:\!\mathcal{S}}{B}\) and an integer \(r\ge 0\).\\

Given a projective variety \(S\),
a \emph{cluster} over \(S\)
is a finite set of points \(\mathcal{K}\)
of \(S\) or of a finite sequence of point centred blow ups of \(S\)
such that, for every \(p\in \mathcal{K}\),
if \(q\)
is a point such that \(p\)
is infinitely near to \(q\),
then \(q \in \mathcal{K}\),
see \citep{casas-alvero-singularities-2000}.  For each \(r\ge 1\)
there is a family of smooth projective varieties \(X_r\)
which parametrizes ordered \(r\)-point clusters of \(S\),
see 
\citep{kleiman-multiple-point-1981}.\\

Let \(\apl{\sigma\!:\! B}{\mathcal{S}}\)
be a section of \(\apl{\pi\!:\! \mathcal{S}}{B}\).
Let \(\tilde{\pi}\) be \(\pi\circ\bl_{\underline{\sigma}}\)
and \(E_\sigma\) be the exceptional divisor in \(\Bl_{\underline{\sigma}}(\mathcal{S})\).

A section of \(\tilde{\pi}\)
whose image is contained in \(E_\sigma\)
is said to belong to the \emph{first infinitesimal neighbourhood} of the section
\(\sigma\).
Inductively, for an integer \(k> 0\)
a section \(\tau\)
belongs to the \emph{\((k+1)\)-th
  infinitesimal neighbourhood} of \(\sigma\)
if \(\tau\)
belongs to the first infinitesimal neighbourhood of a section in the \(k\)-th
infinitesimal neighbourhood of \(\sigma\).
A section \emph{infinitely near} to \(\sigma\)
is a section in some infinitesimal neighbourhood of \(\sigma\).\\

\begin{lemma}\label{lem:flat-and-baseChange-blowup}
  Let \(W\)
  be a ground scheme. Let \(\overapl{X}{W}{f}\)
  be a finitely presented \(W\)-scheme.
  If \(Y\! \overset{i}{\rightarrow} \!X\)
  is a closed subscheme which is flat over
  \(W\),
  then the blow up \(\overline{X}\)
  of \(X\)
  at \(Y\)
  commutes with every base change \(\apl{W'}{W}\).
  Furthermore, if \(X\) is flat over \(W\) then so is \(\overline{X}\)
\end{lemma}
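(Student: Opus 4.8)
The plan is to reduce everything to a local computation with Rees algebras. Since the assertion is local on \(X\), on \(W\) and on the base change, we may take \(W=\s R\) and \(X=\s A\) affine with \(Y=\s(A/I)\) for a finitely generated ideal \(I\subseteq A\) (finitely generated because \(i\) is a closed embedding of a finitely presented scheme), \(A/I\) flat over \(R\), and the base change given by a ring map \(\apl{R}{R'}\); write \(A'=A\otimes_R R'\) and \(I'=IA'\). Recall that \(\overline{X}=\Bl_Y(X)\) is \(\Proj\) of the Rees algebra \(\mathcal R(I)=\bigoplus_{n\ge 0}I^n\), and that the formation of \(\Proj\) of a quasi-coherent graded algebra commutes with arbitrary base change. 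Hence \(\overline{X}\times_W\s R'\cong\Proj_{A'}\big(\bigoplus_n I^n\otimes_R R'\big)\), whereas \(\Bl_{Y'}(X')=\Proj_{A'}\big(\bigoplus_n (I')^n\big)\); the two coincide as soon as the natural surjections \(I^n\otimes_R R'\twoheadrightarrow (I')^n\) are isomorphisms — and in fact it suffices that they be isomorphisms for \(n\gg 0\), which is all that \(\Proj\) sees. (Note that \(A/I\) being flat over \(R\) already gives \((A/I)\otimes_R R'=A'/I'\), so \(Y'=V(I')\) is the right center.)

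So the real content is the commutative-algebra statement: \emph{if \(A/I\) is flat over \(R\), then for every \(n\) the formation of \(I^n\) commutes with base change}. I would first dispatch the case \(n=1\): tensoring \(0\to I\to A\to A/I\to 0\) with \(R'\) and using \(\operatorname{Tor}_1^R(A/I,R')=0\) gives that \(I\otimes_R R'\to A'\) is injective with image \(I'\). For the inductive step, the \(\operatorname{Tor}\) sequences of \(0\to I^{n}\to A\to A/I^{n}\to 0\) and of \(0\to I^{n}/I^{n+1}\to A/I^{n+1}\to A/I^{n}\to 0\) reduce everything to the flatness over \(R\) of the quotients \(A/I^{n}\), equivalently of the graded pieces \(I^{n}/I^{n+1}\) of the associated graded ring. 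That flatness of \(A/I\) over \(R\) propagates to all the \(A/I^{n}\), and that the powers \(I^{n}\) commute with base change, is a classical fact (in the finitely presented setting one may reduce to \(R\) Noetherian local and apply the local criterion of flatness fibrewise). This propagation is the one genuinely non-formal ingredient, and I expect it to be the main obstacle; everything around it is bookkeeping with \(\Proj\), Rees algebras and \(\operatorname{Tor}\) sequences.

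For the last assertion, assume moreover that \(X\) is flat over \(W\). Together with the flatness of \(Y\) over \(W\), the sequence \(0\to\mathcal I\to\mathcal O_X\to\mathcal O_Y\to 0\) and its long exact \(\operatorname{Tor}\) sequence show that \(\mathcal I\) is flat over \(\mathcal O_W\), and then — by the fact just quoted — so is every power \(\mathcal I^n\). Working affine-locally once more, it remains to observe that \(\Proj\) of a graded \(A\)-algebra \(B=\bigoplus_n B_n\) all of whose graded pieces \(B_n\) are \(R\)-flat is flat over \(\s R\): for homogeneous \(f\in B_+\) the ring \(B_{(f)}=(B_f)_0\) is, as an \(R\)-module, a direct summand of \(B_f=\varinjlim\big(B\xrightarrow{\,\cdot f\,}B\xrightarrow{\,\cdot f\,}\cdots\big)\), a filtered colimit of copies of the \(R\)-flat module \(B\), hence \(R\)-flat; since the opens \(D_+(f)\) cover \(\Proj_A B\), taking \(B=\mathcal R(\mathcal I)\) shows \(\overline{X}\) is flat over \(W\).
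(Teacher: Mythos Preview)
Your approach is essentially the paper's: pass to the affine Rees-algebra picture, use that \(\Proj\) commutes with base change, and then argue that the natural surjections \(I^n \otimes_R R' \to (I')^n\) are isomorphisms, which you correctly reduce to the \(R\)-flatness of all the quotients \(A/I^n\) (equivalently, of the conormal modules \(I^n/I^{n+1}\)). The paper makes the same reduction, writing \(p^*(\mathscr{I}^n) = (p^*\mathscr{I})^n\) with a citation, and for the flatness clause asserting that \(\mathscr{I}/\mathscr{I}^2 \cong \mathscr{I} \otimes_{\mathscr{O}_X} \mathscr{O}_X/\mathscr{I}\) is flat over \(W\). You are more candid, flagging this propagation of flatness as ``the one genuinely non-formal ingredient'' and calling it a classical fact extractable from the local criterion.

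This is where both arguments break, and the step cannot be filled as stated: flatness of \(A/I\) over \(R\) does \emph{not} force flatness of \(A/I^n\) for \(n\ge 2\), even with \(A\) flat, and in fact the lemma itself fails without an extra hypothesis. Take \(R=k[t]\), \(A=k[t,x,y]/(y^2-tx)\) (an \(A_1\)-surface, flat over \(R\)) and \(I=(x,y)\), so that \(Y=V(I)\cong\s R\) is even a section of \(X\to W\). In \(A/I^2\) the class \(\bar x\) is nonzero (one checks \(x\notin I^2\)) but \(t\bar x=\overline{y^2}=0\), so \(A/I^2\) has \(t\)-torsion. Base-changing along \(R\to R/(t)=k\): on the chart \(D_+(U)\) the fibre \(\overline{X}_0\) of the blow-up is \(\s k[x,v]/(xv^2)\), where \(x\) is a zero-divisor, so the pulled-back centre ideal is not invertible and \(\overline{X}_0\) cannot be \(\Bl_{I'}(X')\); concretely \(\overline{X}_0\) carries an extra reduced \(\PP^1\) over the origin, whereas \(\Bl_{I'}(X')=\s k[x,v]/(v^2)\) is just a doubled line (in its Rees algebra \(V=yT\) is nilpotent, so \(D_+(V)=\emptyset\)). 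The statement --- and your induction --- becomes correct under the additional hypothesis that \(Y\hookrightarrow X\) is a \emph{regular} immersion, since then \(I^n/I^{n+1}\cong\mathrm{Sym}^n_{A/I}(I/I^2)\) is locally free over the flat \(A/I\); in the counterexample \(I\) has height \(1\) but needs two generators at the cone point, so regularity fails exactly there.
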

\begin{proof}
  Fix a base change \(\apl{W'}{W}\). Consider
  \(\apl{q\!:\! X'}{X}\) and \(\apl{Y'}{Y}\) the base change of \(X\), \(Y\) and
  \(\apl{i'\!:\!Y'}{X'}\)  the pull back of \(i\). We call \(\overline{X'}\) the
  blow up of \(X'\) at \(Y'\).
  
  Consider \(\mathscr{I}\) the defining ideal sheaf of \(Y\) in \(X\) and
  \(\mathscr{I}'\) the defining ideal sheaf of \(Y'\) in \(X'\) (i.e. the kernel of
  \(\apl{\mathscr{O}_{X'}}{(i')_{*}\mathscr{O}_{Y'}}\)).
  There is a natural isomorphism (see \citep
  {vakil-rising-2015})
  \[\mathcal{X}:=\rproj \left( p^{*}\left( \bigoplus_{n\ge 0} \mathscr{I}^n \right) \right)\cong 
    \rproj \left( \bigoplus_{n\ge 0} \mathscr{I}^n \right)\times_X X'= \overline{X}\times_W W'\]

  The functor \(p^{*}\) commute with colimits, 
  so \(p^{*} \left( \bigoplus_{n\ge
      0} \mathscr{I}^n \right)=\bigoplus_{n\ge 0}
  p^{*}\left(\mathscr{I}^n\right)\). Since \(Y\) is flat over \(W\) the ideal sheaf
  \(\mathscr{I}'\) is \(p^{*} \left( \mathscr{I} \right)\), \citep
  {vakil-rising-2015}
  Then for each \(n\ge 0\) \(\left( \mathscr{I}' \right)^n=\left(p^{*}\left(
      \mathscr{I} \right) \right)^n\) but \(\left(p^{*}\left(
      \mathscr{I} \right) \right)^n = p^{*}\left(
    \mathscr{I}^n \right)\)
  \citep
  {vakil-rising-2015}.
  Hence \(\mathcal{X}=\overline{X'}\).\\


  Now if \(X\) is flat over \(W\) 
  the sheaf \(\mathscr{I}\)
  is also flat over \(W\).
  Since the sheaf \(\mathscr{I}/\mathscr{I}^2\cong \mathscr{I}\otimes_{\mathscr{O}_X} \mathscr{O}_X/\mathscr{I}\) is flat over \(W\),
  the sheaf \(\mathscr{I}^2\) is flat over \(W\) and by induction so is
  \(\mathscr{I}^n\) for all \(n\ge 1\).
\end{proof}

\begin{corollary}\label{coro:morphism-between-blow-ups}
  Consider the following Cartesian diagram.
  \[
    \begin{tikzcd}
      Y' \rar{b} \dar[swap]{f}  & X' \ar{r}{\beta} \dar{g} 
      & Y' \ar{d}{f} \\
      Y \rar[swap]{a}  &  X \rar[swap]{\alpha} & Y\\
    \end{tikzcd}
  \]
  
  
  Let \(\apl{\alpha\!:\!X}{Y}\) 
  be a separated morphism and \(\apl{a \!:\!Y }{X }\)
  a section of \(\alpha\).
  By Lemma~\ref{lem:imatge section is closed} the morphism \(a\)
  is a closed embedding. Assume that \(a\) corresponds to a finite type sheaf
  of ideals. Then \(\beta\) is separated and \(b\)
  is a closed embedding corresponding to a finite type sheaf of ideals.\\

  Consider the blow ups \(\apl{\bl_a\!:\! \Bl_a(X)}{X}\)
  and \(\apl{\bl_b\!:\!\Bl_{b}(X')}{X'}\).
  If the morphism \(\alpha\)
  is flat and finitely presented, then there is a unique morphism
  \(\apl{\theta(g)\!:\!\Bl_b(X')}{\Bl_a(X)}\)
  which makes the following diagram Cartesian.
  \[
    \begin{tikzcd}
      \Bl_b(X')    \dar[dashed,swap]{\theta(g)}   \rar{\bl_b} & X' \dar{g} \\
      \Bl_a(X) \rar[swap]{\bl_a} & X  \\
    \end{tikzcd}
  \]
\end{corollary}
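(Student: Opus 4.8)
The strategy is to reduce the whole statement to Lemma~\ref{lem:flat-and-baseChange-blowup}, so that after the initial checks everything becomes a diagram chase. First I would dispose of the preliminary claims: following the paper's convention the displayed diagram has both squares Cartesian, so $b$ is, under the identification $Y'\cong Y\times_X X'$, the base change of $a$ along $g$, and by Remark~\ref{rmk:post_first_lema} the morphism $\beta$ is separated and $a,b$ are closed embeddings. Writing $\mathscr{I}$ for the ideal sheaf of $a(Y)$ in $X$, the ideal sheaf of $b(Y')=g^{-1}(a(Y))$ in $X'$ is the image of $g^{*}\mathscr{I}$ in $\mathscr{O}_{X'}$, which is of finite type because $\mathscr{I}$ is; hence $\Bl_b(X')$ is legitimately defined in the sense of Notation~\ref{def:a-ascension}.

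For the core step I would regard $X$ as a scheme over the ground scheme $W:=Y$ via $\alpha$; by hypothesis this is a finitely presented (indeed flat) $W$-scheme. The closed subscheme $a(Y)\subseteq X$ is flat over $W=Y$, because $a$ is a section of $\alpha$, so $\apl{\alpha|_{a(Y)}\!:\!a(Y)}{Y}$ is the inverse of the isomorphism $\apl{a\!:\!Y}{a(Y)}$ and hence flat. Thus Lemma~\ref{lem:flat-and-baseChange-blowup} applies to the base change $\apl{f\!:\!Y'}{Y}$: since the right-hand square is Cartesian, $X'\cong X\times_Y Y'$; since the left-hand square is Cartesian, the pullback of $a(Y)$ to $X'$ is exactly $b(Y')$; and the lemma then produces a canonical isomorphism $\Bl_b(X')\cong\Bl_a(X)\times_Y Y'$.

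It remains to recast this as the required Cartesian square over $X$ and to settle uniqueness. Because $X'\cong X\times_Y Y'$, base-changing $\apl{\bl_a\!:\!\Bl_a(X)}{X}$ along $\apl{g\!:\!X'}{X}$ coincides, by the pasting law for fibre products, with base-changing $\apl{\alpha\circ\bl_a\!:\!\Bl_a(X)}{Y}$ along $f$; that is, $\Bl_a(X)\times_X X'\cong\Bl_a(X)\times_Y Y'$. Combined with the previous isomorphism this gives $\Bl_b(X')\cong\Bl_a(X)\times_X X'$, and taking $\theta(g)$ to be the first projection produces a Cartesian square with $\bl_a\circ\theta(g)=g\circ\bl_b$. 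For uniqueness, observe that any morphism making that square Cartesian is in particular a morphism over $X$; and since $\bl_b$ pulls the ideal of $b(Y')$ back to an invertible sheaf (universal property of the blow up), the composite $\apl{g\circ\bl_b\!:\!\Bl_b(X')}{X}$ pulls $\mathscr{I}$ back to an invertible sheaf, hence factors uniquely through $\apl{\bl_a\!:\!\Bl_a(X)}{X}$ by the universal property of $\Bl_a(X)$; this unique factorisation is therefore the only $X$-morphism, a fortiori the only morphism making the square Cartesian. The one point that is not purely formal is the verification in the middle paragraph --- that $a(Y)$ is flat over $Y$ and that $b(Y')$ is its pullback --- so that Lemma~\ref{lem:flat-and-baseChange-blowup} genuinely applies; the rest is bookkeeping with fibre products.
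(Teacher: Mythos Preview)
Your proposal is correct and follows essentially the same route as the paper: you apply Lemma~\ref{lem:flat-and-baseChange-blowup} with ground scheme $W=Y$, noting that $a(Y)$ is flat over $Y$ because $\alpha\circ a=\Id_Y$, which is exactly the paper's two-line argument. The additional material you supply---the preliminary checks on $\beta$ and $b$, the pasting-law identification $\Bl_a(X)\times_Y Y'\cong\Bl_a(X)\times_X X'$, and the uniqueness via the universal property of the blow up---simply fills in details that the paper leaves implicit.
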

\begin{proof}
  The scheme \(X'\) is the base change of \(X\) by \(\apl{Y'}{Y}\) and the identity
  morphism \(\alpha\circ a\) is flat.
\end{proof}

\begin{corollary}\label{coro:blowup-of-a-family-is-a-family}
  Given a section \(\apl{\sigma\!:\! B}{\mathcal{S}}\) of the family
  \(\apl{\pi\!:\!\mathcal{S}}{B}\), the morphism
  \(\apl{\pi\circ\bl_{\sigma}\!:\!\Bl_{\sigma}(\mathcal{S})}{B}\) is a family. 
\end{corollary}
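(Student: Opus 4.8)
The plan is to verify the four defining properties of a \emph{family} for the morphism $\pi\circ\bl_\sigma$: separatedness, surjectivity, flatness, finite type, with $B$ of finite type and irreducible (which is unchanged) and integrality of the generic fibre. Since $\pi$ is already a family, most of these will follow by combining standard facts about blow ups with the observation that $\underline\sigma$, being a section of the separated morphism $\pi$, is a closed embedding (Lemma~\ref{lem:imatge section is closed}) and — because $\pi$ is of finite type and $\mathcal{S}$ is locally Noetherian — corresponds to a finite type sheaf of ideals, so $\bl_{\underline\sigma}$ is defined in the sense of Notation~\ref{def:a-ascension}.

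First I would handle flatness and finite type together. Apply Lemma~\ref{lem:flat-and-baseChange-blowup} with ground scheme $W=B$, taking $X=\mathcal{S}$ and $Y=\underline\sigma(B)$: the section image is isomorphic to $B$ via $\pi$, hence flat over $B$, and $\mathcal{S}$ is finitely presented over $B$ (finite type over a finite type base over a field), so the lemma gives that $\Bl_{\underline\sigma}(\mathcal{S})$ is flat over $B$. The blow up morphism $\bl_{\underline\sigma}$ is projective, in particular of finite type, so the composite $\pi\circ\bl_{\underline\sigma}$ is of finite type. For separatedness: $\bl_{\underline\sigma}$ is projective hence separated, $\pi$ is separated by hypothesis, and a composite of separated morphisms is separated. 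For surjectivity: $\bl_{\underline\sigma}$ is surjective (a blow up is an isomorphism over the complement of the centre, which is dense since the centre is a proper closed subset — here $\underline\sigma(B)$ is a section, not all of $\mathcal{S}$), and $\pi$ is surjective, so the composite is surjective.

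The one point requiring a little care is the generic fibre. Let $\eta$ be the generic point of $B$. Blow up commutes with the flat base change $\s(k(\eta))\to B$ (again by Lemma~\ref{lem:flat-and-baseChange-blowup}), so the generic fibre of $\pi\circ\bl_{\underline\sigma}$ is the blow up of the generic fibre $\mathcal{S}_\eta$ of $\pi$ at the point $\underline\sigma(\eta)$. Now $\mathcal{S}_\eta$ is integral by the hypothesis that $\pi$ is a family, and the blow up of an integral scheme at a closed subscheme (with centre not the whole space) is again integral: it is irreducible because the exceptional locus is nowhere dense and the blow up is birational onto the integral $\mathcal{S}_\eta$, and it is reduced because it is covered by affine charts that are sub\-rings of the function field of $\mathcal{S}_\eta$ (Proj of the Rees algebra, which has no nilpotents when the base ring is a domain). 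I expect this verification of integrality of the generic fibre to be the only mildly non-formal step; everything else is a direct citation of the preceding lemmas together with the permanence properties of projective morphisms. One should also note $\underline\sigma(\eta)$ is a genuine point of $\mathcal{S}_\eta$, so the blow up is non-trivial, but that is immediate.
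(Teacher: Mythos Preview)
Your proof is correct and follows essentially the same route as the paper: both invoke Lemma~\ref{lem:flat-and-baseChange-blowup} to get flatness of $\pi\circ\bl_{\sigma}$ and compatibility with base change, then use the latter to identify the generic fibre as a blow up of the integral generic fibre of $\pi$. You simply spell out in more detail the routine points (separatedness, surjectivity, finite type, and why a blow up of an integral scheme is integral) that the paper leaves implicit.
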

\begin{proof}
  The morphism \(\pi\circ \bl_{\sigma}\) is separated and
  surjective. By Lemma~\ref{lem:flat-and-baseChange-blowup} it is flat and
  commutes with any base change \(\apl{B'}{B}\), so its
  generic fibre \(\eta\) is integral because \(\eta\) is a blow up of the generic
  fibre of \(\pi\). 
\end{proof}

\begin{definition}
  An \emph{ordered \(r\)-relative
    cluster of} \(\apl{\pi\!:\! \mathcal{S}}{B}\)
  is a finite sequence of morphisms \((\sigma_0,\dots,\sigma_{r})\)
  such that for all \(n =0,\dots,r\)
  each \(\sigma_n\)
  is a section of the family
  \(\apl{\pi_n\!:\!  \mathcal{S}_n}{B}\) defined recursively as follows.
  
  The family \(\pi_0\)
  is \(\pi\),
  so \(\mathcal{S}_0\)
  is \(\mathcal{S}\).
  For \(n= 0,\dots,r-1\)
  the scheme \(\mathcal{S}_{n+1}\)
  is \(\Bl_{\sigma_n}(\mathcal{S}_n)\)
  and the family \(\pi_{n+1}\)
  is the composition \(\pi_n\circ \bl_{\sigma_n}\) (\(\pi_n\) is a family by
  Corollary~\ref{coro:blowup-of-a-family-is-a-family}).
\end{definition}


\begin{definition}
  An \emph{ordered \(r\)-relative
    clusters family of} \(\apl{\pi\!:\! \mathcal{S}}{B}\)
  (or an \(r\)-rcf)
  is a couple \((C,\beta_{\bullet})\)
  consisting of a scheme \(C\)
  and a finite sequence \(\beta_{\bullet}\)
  of morphisms \(\beta_{0},\dots,\beta_{r}\)
  such that for all \(n=0,\dots,r\)
  each \(\beta_{n}\)
  is a section of the morphism
  \(\apl{\pi_{n}^{C}\!:\! \mathcal{S}_{n}^{C}}{C\times B}\)
  defined recursively as follows.

  The morphism \(\pi_0^C\)
  is \(\apl{\Id_C\times \pi\!:\! C\times\mathcal{S}}{C\times B}\).
  For \(n=0,\dots,r-1\)
  the scheme \(\mathcal{S}_{n+1}^{C}\)
  is \(\Bl_{\beta_n}(\mathcal{S}^C_n)\)
  and the morphism \(\pi^C_{n+1}\)
  is the composition \(\pi^C_n\circ \bl_{\beta_n}\),
  as shown in the following commutative diagram.
  \[
    \begin{tikzcd}
      \mathcal{S}^C_{n+1}:= \Bl_{\beta_n}\left( \mathcal{S}^C_n \right) \drar[swap]{\pi^C_{n+1}} \rar{\bl_{\beta_n}} &
      \mathcal{S}^C_{n} \dar[swap, near start]{\pi^C_n}
      \\ 
      & C\times B \uar[swap, bend right]{\beta_n} \\
    \end{tikzcd}
  \]
\end{definition}

Given an \(r\)-rcf
\((C,\beta_{\bullet})\)
of \(\pi\),
if the morphism \(\apl{\pi^C_0\!:\!  C\times \mathcal{S}}{C\times B}\)
is a family (e.g. \(C=\s(K)\))
then the sequence \((\beta_0,\dots,\beta_r)\)
is just an ordered \(r\)-relative cluster of \(\pi^C_0\).

\begin{remark}
  For an \(r\)-rcf
  \((C,\beta_{\bullet})\)
  of \(\pi\)
  the blow up \(\apl{\bl_{\beta_r}\!\!\!:\mathcal{S}^C_{r+1}}{\mathcal{S}^C_r}\)
  and the morphism \(\pi^C_{r+1}:=\pi^C_r\circ \bl_{\beta_r}^C\)
  exist. But the morphism \(\pi^C_{r+1}\) may admit no sections.
\end{remark}

\begin{notation}
  Given an \(r\)-rcf
  \((C,\beta_{\bullet})\)
  of \(\pi\)
  with \(k\le r\)
  and \(\beta_{\bullet}= (\beta_0,\dots,\beta_{r})\),
  we denote by \(\beta_{\bullet}|_k\)
  the subsequence \((\beta_0,\dots,\beta_{k})\),
  so \((C,\beta_{\bullet}|_k)\) is a \(k\)-rcf.
\end{notation}

\begin{remark}
  Given an \(r\)-rcf
  \((C,\beta_{\bullet})\)
  with \(k\le r\),
  the target schemes of \(\beta_i\) and \((\beta_{\bullet}|_k)_i\) are the same.
\end{remark}

\begin{definition}
  Let \((C,\beta_{\bullet})\)
  and \((C',\beta'_{\bullet})\)
  be \(r\)-rcf
  of \(\apl{\pi \!:\!  \mathcal{S}}{ B}\)
  and \(\apl{f \!:\! C }{C' }\)
  a morphism. Given \(k\le r\),
  we call a morphism \(\apl{g \!:\! \mathcal{S}^C_k }{\mathcal{S}^{C'}_k }\)
  a \(k\)-\emph{section lift} of \(f\) if the following diagram commutes.
  
  \begin{equation}
    \begin{tikzcd}
      C\times B %
      \rar{\beta_k} \dar[swap]{f\times \Id_B} & \mathcal{S}_{k}^C \dar{g} \\
      C'\times B \rar[swap]{\beta'_k} & \mathcal{S}_k^{C'} \\
    \end{tikzcd}\label{eq:1}
  \end{equation}
  Given \(k\le r+1\),
  we call the morphism \(g\)
  the \(k\)-\emph{projection lift} of \(f\) if the following diagram is
  Cartesian. 
  \begin{equation}
    \begin{tikzcd}
      \mathcal{S}_{k}^C \dar[swap]{g} \rar{\pi_{k}^{C}}  & C\times B
      \dar{f\times \Id_{B}}  \\
      \mathcal{S}_k^{C'}
      \rar[swap]{\pi_{k}^{C'}} & C'\times B  \\
    \end{tikzcd}\label{eq:2}
  \end{equation}
\end{definition}

\begin{remark}\label{rmk:initial cartesian diagram}
  If the morphism \(\apl{g \!:\! \mathcal{S}^C_k }{\mathcal{S}^{C'}_k }\)
  is the \(k\)-projection
  lift of \(f\)
  then it is a \(k\)-section
  lift of \(f\) if and only if the diagram~\eqref{eq:1} is Cartesian.
\end{remark}

The following Proposition is an immediate consequence of
Corollary~\ref{coro:morphism-between-blow-ups} and Remark~\ref{rmk:initial cartesian diagram}.

\begin{proposition}\label{prop:tetha of g}
  Let \((C,\beta_{\bullet})\)
  and \((C',\beta'_{\bullet})\)
  be \(r\)-rcf
  of \(\apl{\pi \!:\!  \mathcal{S}}{ B}\)
  with \(k\le r\)
  and \(\apl{f \!:\! C }{C' }\)
  a morphism.  If a morphism
  \(\apl{g \!:\! \mathcal{S}^C_k }{\mathcal{S}^{C'}_k }\)
  is both the \(k\)-projection
  and a \(k\)-section
  lift of \(f\)
  then there is a unique morphism
  \(\apl{\theta(g) \!:\! \mathcal{S}^C_{k+1} }{\mathcal{S}^{C'}_{k+1} }\)
  which is the \((k+1)\)-projection lift of \(f\).
\end{proposition}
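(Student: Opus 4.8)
\section*{Proof proposal for Proposition~\ref{prop:tetha of g}}

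The plan is to read this statement off Corollary~\ref{coro:morphism-between-blow-ups}, applied to the $k$-th stage of the two towers of blow-ups. First I would fix the dictionary: in the input diagram of that Corollary take $Y:=C'\times B$, $X:=\mathcal{S}_k^{C'}$, $\alpha:=\pi_k^{C'}$ and $a:=\beta'_k$ on the bottom row (so that $a$ is a section of $\alpha$, by definition of an $r$-rcf), $Y':=C\times B$, $X':=\mathcal{S}_k^{C}$, $\beta:=\pi_k^{C}$ and $b:=\beta_k$ on the top row, and $f\times\Id_B$, $g$ as the two vertical morphisms. With these identifications the right-hand square of the Corollary's diagram becomes diagram~\eqref{eq:2}, which is Cartesian because $g$ is the $k$-projection lift of $f$, and its left-hand square becomes diagram~\eqref{eq:1}, which is Cartesian because $g$ is moreover a $k$-section lift of $f$, by Remark~\ref{rmk:initial cartesian diagram}. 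So the whole input diagram of Corollary~\ref{coro:morphism-between-blow-ups} is at our disposal.

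Next I would check the remaining hypotheses of Corollary~\ref{coro:morphism-between-blow-ups}: that $\pi_k^{C'}$ is separated, flat and finitely presented and that $\beta'_k$ is a closed embedding corresponding to a finite type sheaf of ideals (the latter already being part of the $r$-rcf data, since $\mathcal{S}_{k+1}^{C'}=\Bl_{\beta'_k}(\mathcal{S}_k^{C'})$ is formed; cf.\ Notation~\ref{def:a-ascension}). These follow by a routine induction along the tower. In the base case $\pi_0^{C'}=\Id_{C'}\times\pi$ is the base change of the family $\pi$ along $\apl{C'\times B}{B}$, hence separated, flat and finitely presented. In the inductive step, $\beta'_n$ being a section of the separated morphism $\pi_n^{C'}$ is a closed embedding whose image is isomorphic to $C'\times B$ and so flat over it, so Lemma~\ref{lem:flat-and-baseChange-blowup} gives that $\mathcal{S}_{n+1}^{C'}$ is again flat over $C'\times B$; and $\pi_{n+1}^{C'}=\pi_n^{C'}\circ\bl_{\beta'_n}$ is separated and finitely presented, being a composite of two such morphisms ($\bl_{\beta'_n}$, as a blow-up at a finitely presented closed subscheme, is separated and finitely presented). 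This bookkeeping is the only non-formal ingredient, and I expect it to be the main point to get right: it is precisely what makes Lemma~\ref{lem:flat-and-baseChange-blowup} applicable at each stage, hence what guarantees that $\bl_{\beta_k}$ is the base change of $\bl_{\beta'_k}$ along $g$.

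With all hypotheses in hand, Corollary~\ref{coro:morphism-between-blow-ups} produces a morphism $\apl{\theta(g)\!:\!\mathcal{S}_{k+1}^{C}}{\mathcal{S}_{k+1}^{C'}}$ (recall $\mathcal{S}_{k+1}^{C}=\Bl_{\beta_k}(\mathcal{S}_k^{C})$ and $\mathcal{S}_{k+1}^{C'}=\Bl_{\beta'_k}(\mathcal{S}_k^{C'})$), uniquely characterised by the property that the square with horizontal maps $\bl_{\beta_k}$, $\bl_{\beta'_k}$, left vertical map $\theta(g)$ and right vertical map $g$ is Cartesian. To see that $\theta(g)$ is the $(k+1)$-projection lift of $f$, I would stack that square to the left of the Cartesian square~\eqref{eq:2} at level $k$: since $\pi_{k+1}^{C}=\pi_k^{C}\circ\bl_{\beta_k}$ and $\pi_{k+1}^{C'}=\pi_k^{C'}\circ\bl_{\beta'_k}$, the composite square is diagram~\eqref{eq:2} at level $k+1$ with vertical map $\theta(g)$, Cartesian as a composite of two Cartesian squares. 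For uniqueness I would run the composition fact recalled in Section~\ref{preliminaries} in reverse: any $(k+1)$-projection lift $h$ of $f$ lying over $g$ (that is, with $\bl_{\beta'_k}\circ h=g\circ\bl_{\beta_k}$) makes the same blow-up square Cartesian, whence $h=\theta(g)$ by the uniqueness clause of Corollary~\ref{coro:morphism-between-blow-ups}; so $\theta(g)$ is the unique $(k+1)$-projection lift of $f$ lying over $g$, as asserted. Everything past the induction of the previous paragraph is formal diagram-chasing, so I anticipate no further obstacle.
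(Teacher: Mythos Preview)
Your proposal is correct and follows exactly the route the paper indicates: the paper's entire proof is the one-line remark that the Proposition ``is an immediate consequence of Corollary~\ref{coro:morphism-between-blow-ups} and Remark~\ref{rmk:initial cartesian diagram}.'' Your write-up simply unpacks that sentence --- setting up the dictionary with Corollary~\ref{coro:morphism-between-blow-ups}, invoking Remark~\ref{rmk:initial cartesian diagram} to see diagram~\eqref{eq:1} is Cartesian, running the easy induction to check the flatness/finite-presentation hypotheses, and stacking the resulting Cartesian blow-up square with diagram~\eqref{eq:2} --- so there is nothing to compare.
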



\begin{definition}\label{def:r-rcf-morphism}
  A morphism between two \(r\)-rcf
  \((C,\beta_{\bullet})\)
  and \((C',\beta'_{\bullet})\)
  of \(\apl{\pi \!:\! \mathcal{S} }{B }\),
  or an \emph{\(r\)-rcf-morphism},
  is a morphism of schemes \(\apl{f\!:\! C}{C'}\)
  such that for all \(n=0,\dots,r\) the 
  morphism \(\apl{f_{n}\!:\! \mathcal{S}_{n}^{C}}{\mathcal{S}_{n}^{C}}\),
  defined recursively as follows, is an \(n\)-section lift of \(f\).

  The morphism \(f_{0}\)
  is \(f\times\Id_{\mathcal{S}}\),
  which is the 0-projection lift of \(f\).
  By hypothesis \(f_0\)
  is a 0-section lift of \(f\).
  By Proposition~\ref{prop:tetha of g} there is a unique morphism
  \(\apl{f_1:=\theta(f_0) \!:\! \mathcal{S}_1^C}{ \mathcal{S}_1^{C'}}\)
  which is the 1-projection lift of \(f\).
  Again by hypothesis \(f_1\)
  is a 1-section lift of \(f\) and we can iterate the process.
\end{definition}

Clearly by definition the \(r\)-rcf-morphisms are stable under composition and
the identity is an \(r\)-rcf-morphism from an \(r\)-rcf to itself. So the
set of all \(r\)-rcf of \(\pi\) is a category with the \(r\)-rcf-morphisms as
morphisms.

\begin{remark}\label{rmk:extra morphism, Cartesian square of base, extra
    C'-ssf}
  For each \(n=0,\dots,r\)
  the morphism \(\apl{f_{n}\!:\! \mathcal{S}_{n}^{C}}{\mathcal{S}_{n}^{C'}}\)
  is both the \(n\)-projection and an \(n\)-section lift of \(f\).
  There is a unique morphism
  \(\apl{f_{r+1}\!:\! \mathcal{S}^C_{r+1}}{\mathcal{S}^{C'}_{r+1}}\)
  which is the \((r+1)\)-projection lift of \(f\).
  Furthermore, if \((C,\beta_{\bullet})\)
  is an \((r+1)\)-rcf of \(\pi\), the following diagram commutes.
  \[
    \begin{tikzcd}
      C\times B \drar[swap]{\Id_{C\times B}} \rar{\beta_{r+1}} &
      \mathcal{S}^{C}_{r+1} \rar{f_{r+1}} \dar[swap, near start]{\pi^{C}_{r+1}}
       &
      \mathcal{S}^{C'}_{r+1} \dar{\pi^{C'}_{r+1}}    \\
      & C\times B \rar[swap]{f\times \Id_{B}} & C'\times B \\
    \end{tikzcd}
  \]
  i.e. \(\left(C,(f_{r+1}\circ \beta_{r+1}),f\right)\)
  is a \(C'\)-ssf of \(\pi^{C'}_{r+1}\).
\end{remark}

\begin{notation}
  We denote \(\apl{f\!:\! (C,\beta_{\bullet})}{(C',\beta'_{\bullet})}\)
  a morphism \(\apl{f\!:\! C}{C'}\)
  which is an \(r\)-rcf-morphism from \((C,\beta_{\bullet})\) to \((C',\beta'_{\bullet})\).
\end{notation}

\begin{definition}\label{def:Urcf-of-pi}
  Let \((\Cl_r,\alpha^r_{\bullet})\)
  be an \(r\)-rcf
  of \(\apl{\pi\!:\! \mathcal{S}}{B}\).
  We call \((\Cl_r,\alpha^r_{\bullet})\)
  an \emph{Universal \(r\)-relative
    clusters family} (or \(r\)-Urcf)
  of \(\pi\)
  if it satisfies the following universal property: for any \(r\)-rcf
  \((C,\beta_{\bullet})\)
  of \(\pi\)
  there is a unique morphism \(\apl{f\!:\! C}{\Cl_r}\)
  which is an \(r\)-rcf-morphism
  \(\apl{f\!:\! (C,\beta_{\bullet})}{(\Cl_r,\alpha^r_{\bullet})}\).
\end{definition}

If an \(r\)-Urcf
of \(\apl{\pi\!:\! \mathcal{S}}{B}\)
exists, by abstract nonsense, it is uniquely determined, up to unique
isomorphism. We denote it by \((\Cl_r,\alpha^r_{\bullet})\), for each \(n=0,\dots,r+1\)
we denote the scheme \(\mathcal{S}^{\Cl_r}_n\) by \(\mathcal{S}^r_n\) and the morphism
\(\pi^{\Cl_r}_n\) by \(\pi^r_n\). For each
\(n=0,\dots,r\) we denote the blow
up \(\bl_{\alpha^r_n}\) by \(\bl_{n+1}^r\).

\begin{remark}\label{equivalence sf - rcf}
  A couple \((Y,\rho)\)
  is an sf of \(\apl{\pi \!:\!  \mathcal{S}}{ B}\)
  if and only if the couple \(\left(Y, \left(\Id_Y\hspace{-2.5pt}\times_Y\:\rho\right)\right)\)
  is a \(0\)-rcf
  of \(\pi\).
  The morphism \(\apl{(\Id_Y\hspace{-2.5pt}\times_Y\rho)\!:\! Y\times B}{Y\times\mathcal{S}}\)
  is a section of \((\Id_Y\hspace{-2.5pt}\times\pi)\) which is separated and surjective. 
  So, by Lemma~\ref{lem:imatge section is closed} the image of
  \((\Id_Y\hspace{-2.5pt}\times_Y\rho)\) is closed and, if the Usf \((X,\psi)\)
  and the \(0\)-Urcf
  \((\Cl_0,\alpha^0_0)\)
  of \(\pi\) exist, \(X\cong \Cl_0\) and \(\alpha^0_0\cong \Id_X\hspace{-2.5pt}\times_X \psi\).
\end{remark}

\begin{notation}\label{not:pr-and-fr}
  If the \(r\)-Urcf
  \(\left(\Cl_{r},\alpha^{r}_{\bullet}\right)\)
  of \(\apl{\pi\!:\! \mathcal{S}}{B}\)
  exists then the couples \(\left(\Cl_{r},\alpha^{r}_{\bullet}|_{r-1}\right)\)
  and
  \(\bigl(\Cl_{r},\left(\alpha^{r}_0,\dots, \alpha^{r}_{r-2},\bl^{r}_{r}\circ
    \alpha^{r}_{r}\right)\bigr)\)
  are \((r-1)\)-rcf
  of \(\pi\). So, if the \((r-1)\)-Urcf
  \(\left(\Cl_{r-1},\alpha^{r-1}_{\bullet}\right)\)
  of \(\pi\) exists too, there are two \((r-1)\)-rcf-morphisms
  \[
    \aplllarga{p^{r-1}}{\left(\Cl_{r}, \alpha^{r}_{\bullet}|_{r-1}\right)}{
      \left(\Cl_{r-1},\alpha^{r-1}_{\bullet} \right)}
  \]
  \[
    \aplllarga{f^{r-1}}{\bigl(\Cl_{r}, \left(\alpha_0^{r},\dots,
        \alpha_{r-2}^{r},\bl^{r}_{r} \circ\alpha^{r}_{r}\right)\bigr)}{
      \left(\Cl_{r-1},\alpha^{r-1}_{\bullet}\right)}.
  \]
\end{notation}

\begin{remark}\label{rmk:projection-of-projection-is-equal-to-the-projection-of-fr}
  If for each \(n=(r-1),r,(r+1)\)
  the \(n\)-Urcf
  of \(\apl{\pi\!:\! \mathcal{S}}{B}\)
  exists, then \(p^{r-1}\circ p^{r} = p^{r-1}\circ f^{r}\)
  as \((r-1)\)-rcf-morphisms.
\end{remark}

In the forthcoming sections we always consider an \((r+1)\)-Urcf
\(\Cl_{r+1}\)
to be a \(\Cl_r\)-scheme
with structure morphism \(p^{r}\);
in particular when we consider some fibre product of some \((r+1)\)-Urcf
it will be understood with respect to the morphism \(p^{r}\).\\

Given the family \(\apl{\pi \!:\! \mathcal{S} }{B }\)
and an integer \(r\ge 0\)
there is the contravariant functor \(\apl{\cl^r\!:\! K \mbox{-}\Sch}{\Set}\)
corresponding to the parameter space problem of the ordered \(r\)-relative
clusters of \(\pi\)
defined as follows. It sends each scheme \(C\)
to the set of sequences of morphisms
\[
  \cl^r C:=\{\apl{\beta_{\bullet}\!:\! C\times B}{\mathcal{S}^C_{\bullet}} \mbox{ such that }
  (C,\beta_{\bullet}) \mbox{ is an \(r\)-rcf of }\pi\}.
\]
Given a morphism \(\apl{f\!:\! C}{C'}\)
and a sequence of morphisms \(\beta'_{\bullet}\in \cl^r C'\)
the image \(\beta_{\bullet} :=\cl^r f(\beta'_{\bullet})\)
is a sequence of morphisms such that \((C,\beta_{\bullet})\)
is an \(r\)-rcf
of \(\pi\)
and \(f\)
is an \(r\)-rcf-morphism
between \((C,\beta_{\bullet})\)
and \((C',\beta'_{\bullet})\).
For \(n=0,\dots,r\)
we construct recursively each section \(\beta_n\)
of the morphism \(\pi_{n}^{C}\)
and each morphism
\(\apl{f_n\!:\! \mathcal{S}_n^{C}}{\mathcal{S}_n^{C'}}\) at the same time as
follows.

The morphism \(f_0\)
is \(f\times \Id_{\mathcal{S}}\), which is the 0-projection lift of \(f\).
We define \(\apl{\beta_0\!:\! C\times B}{\mathcal{S}_0^{C}}\)
as the section given by Lemma~\ref{lem:first} applied to the diagram~\eqref{eq:2}
with \(g=f_0\)
and the section \(\beta'_0\).
Now \(\apl{f \!:\! (C,\beta_0)}{ (C',\beta'_0)}\)
is a 0-rcf-morphism and \(f_0\)
is a 0-section lift of \(f\).
By Proposition~\ref{prop:tetha of g} there is a unique morphism
\(\apl{f_1:=\theta(f_0) \!:\! \mathcal{S}^C_1}{\mathcal{S}^{C'}_1}\)
which is the 1-projection lift of \(f\).
We can iterate the process to obtain \(\beta_n\) and
\(f_{n+1}\) for \(n=1,\dots,r\).\\

Given two \(r\)-rcf
\((C,\beta_{\bullet})\)
and \((C',\beta'_{\bullet})\)
of \(\apl{\pi\!:\! \mathcal{S}}{B}\),
in the definition of the functor \(\cl^r\) it was shown that a morphism
\(\apl{f\!:\! C}{C'}\)
is an \(r\)-rcf-morphism
\(\apl{f\!:\! (C,\beta_{\bullet})}{(C',\beta'_{\bullet})}\)
if and only if \(\beta_{\bullet}=\cl^r f(\beta'_{\bullet})\), which gives an explanation
for the (maybe surprising at first sight) definition of \(r\)-rcf-morphism.\\

\begin{definition}
  Given an \(r\)-rcf
  \((C,\beta_{\bullet})\)
  of \(\apl{\pi\!:\! \mathcal{S}}{B}\)
  and \(c\in C\),
  slightly abusing language we call the sequence of morphisms \((\underline{c}^0,\dots,\underline{c}^r):=\cl^r\im_{c}(\beta_{\bullet})\)
  the \emph{ordered \(r\)-relative
    cluster associated to \(c\)}.
\end{definition}

\begin{proposition}
  Given an \(r\)-rcf
  \((C,\beta_{\bullet})\)
  of \(\apl{\pi\!:\! \mathcal{S}}{B}\)
  and a closed \(c\in C\),
  the ordered \(r\)-relative
  cluster associated to \(c\) is an ordered \(r\)-relative cluster of \(\pi\).
\end{proposition}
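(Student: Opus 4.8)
The plan is to derive the statement directly from the construction of the functor \(\cl^r\) together with the observation, recorded just after the definition of \(r\)-rcf, that an \(r\)-rcf whose base scheme is \(\s(K)\) is the same thing as an ordered \(r\)-relative cluster of \(\pi\).

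First, since \(c\) is a closed point we have \(k(c)\cong K\) (the convention already in force in Definition~\ref{def:associeted section} and the remark following it), so the embedding of \(c\) in \(C\) is a morphism \(\apl{\im_c\!:\!\s(K)}{C}\). Applying the construction of \(\cl^r\) recalled above to \(f=\im_c\) and to \(\beta_{\bullet}\in\cl^r C\), we get that the sequence \((\underline{c}^0,\dots,\underline{c}^r)=\cl^r\im_c(\beta_{\bullet})\) is a sequence of morphisms for which \(\bigl(\s(K),(\underline{c}^0,\dots,\underline{c}^r)\bigr)\) is an \(r\)-rcf of \(\pi\); in particular each \(\underline{c}^n\) is a section of the morphism \(\apl{\pi_n^{\s(K)}\!:\!\mathcal{S}_n^{\s(K)}}{\s(K)\times B}\) produced by the recursive blow-up procedure of that definition, starting from \(\pi_0^{\s(K)}=\Id_{\s(K)}\times\pi\).

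It remains to identify such an \(r\)-rcf over \(\s(K)\) with an ordered \(r\)-relative cluster of \(\pi\). The canonical isomorphisms \(\s(K)\times\mathcal{S}\cong\mathcal{S}\) and \(\s(K)\times B\cong B\) identify \(\pi_0^{\s(K)}\) with \(\pi\); since \(\pi\) is a family, Corollary~\ref{coro:blowup-of-a-family-is-a-family} and an immediate induction show that every \(\pi_n^{\s(K)}\) is a family and that, transported through these isomorphisms, the recursion \(\mathcal{S}_{n+1}^{\s(K)}=\Bl_{\underline{c}^n}(\mathcal{S}_n^{\s(K)})\) becomes the recursion \(\mathcal{S}_{n+1}=\Bl_{\sigma_n}(\mathcal{S}_n)\) defining an ordered cluster, where \(\sigma_n\) is the section of \(\pi_n\) corresponding to \(\underline{c}^n\). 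Hence \((\sigma_0,\dots,\sigma_r)\) — that is, \((\underline{c}^0,\dots,\underline{c}^r)\) read through these identifications — is an ordered \(r\)-relative cluster of \(\pi\). The only point that needs care is this last bookkeeping with the identifications \(\s(K)\times_K(-)\cong\Id\) and their compatibility with the iterated blow-ups, which is exactly the remark following the \(r\)-rcf definition specialised to \(C=\s(K)\); no new geometric input is required, so I expect no real obstacle.
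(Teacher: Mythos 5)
Your proposal is correct and follows essentially the same route as the paper's own (much terser) proof: apply the functorial construction to \(\im_c\), observe that \(k(c)\cong K\) for a closed point, and identify the resulting \(r\)-rcf over \(\s(K)\) with an ordered \(r\)-relative cluster via the canonical isomorphisms \(\s(K)\times\mathcal{S}\cong\mathcal{S}\) and \(B^c\cong B\). Your explicit appeal to Corollary~\ref{coro:blowup-of-a-family-is-a-family} only makes precise what the paper leaves implicit.
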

\begin{proof}
  Clearly from the definition \(\underline{c}^n\)
  is a section of \(\pi^{\s(k(c))}_n\)
  and \(\mathcal{S}^{\s(k(c))}_{n+1}\)
  is the blow up of \(\mathcal{S}^{\s(k(c))}_n\)
  at \(\underline{c}^n(B^{c})\).
  Note that the schemes \(\mathcal{S}^{\s(k(c))}_0\)
  and \(B^{c}\)
  are respectively isomorphic to \(\mathcal{S}\)
  and \(B\), so \(\underline{c}^0\) can be identified with a section of \(\pi\).
\end{proof}

\begin{remark}\label{rmk:assosiation-rcf-and-s}
  For each \(n=0,\dots,r+1\),
  the scheme \(\mathcal{S}^{\s(k(c))}_n\)
  is isomorphic to the fibre over \(c\in C\) of \(\apl{\mathcal{S}_{n}^C}{C}\).
\end{remark}

Theorem~\ref{definicio2} is a natural generalisation of
Proposition~\ref{def:functor S._} and is proved essentially the same way.

\begin{theorem}\label{definicio2}
  A scheme \(C\)
  represents the functor \(\cl^r\)
  associated to \(\apl{\pi \!:\! \mathcal{S}}{B}\)
  if and only if \(C\) is the \(r\)-Urcf of \(\pi\).
\end{theorem}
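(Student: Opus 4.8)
The plan is to follow the proof of Proposition~\ref{def:functor S._} almost verbatim, with the Usf universal property replaced by that of the $r$-Urcf (Definition~\ref{def:Urcf-of-pi}) and the functor $S.\_$ replaced by $\cl^r$. The two inputs I would lean on are the functoriality of $\cl^r$ (established in the paragraphs preceding the statement) and the equivalence recalled just above it: for $r$-rcf $(C,\beta_{\bullet})$, $(C',\beta'_{\bullet})$ a morphism $\apl{f\!:\!C}{C'}$ is an $r$-rcf-morphism $\apl{f\!:\!(C,\beta_{\bullet})}{(C',\beta'_{\bullet})}$ if and only if $\beta_{\bullet}=\cl^r f(\beta'_{\bullet})$.

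First, assume $(\Cl_r,\alpha^r_{\bullet})$ is the $r$-Urcf of $\pi$; I would construct a natural isomorphism $\apl{\mu\!:\!\cl^r}{\Hom_{\Sch}(\_,\Cl_r)}$. For a scheme $C$ and $\beta_{\bullet}\in\cl^r C$, the couple $(C,\beta_{\bullet})$ is an $r$-rcf, so by the universal property there is a unique $r$-rcf-morphism $\apl{f\!:\!(C,\beta_{\bullet})}{(\Cl_r,\alpha^r_{\bullet})}$, i.e.\ by the quoted equivalence a unique $f\in\Hom_{\Sch}(C,\Cl_r)$ with $\beta_{\bullet}=\cl^r f(\alpha^r_{\bullet})$; set $\mu_C(\beta_{\bullet}):=f$. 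Injectivity of $\mu_C$ is immediate from $\beta_{\bullet}=\cl^r(\mu_C(\beta_{\bullet}))(\alpha^r_{\bullet})$; for surjectivity, given $f\in\Hom_{\Sch}(C,\Cl_r)$ the sequence $\cl^r f(\alpha^r_{\bullet})$ lies in $\cl^r C$ and, $f$ being the corresponding $r$-rcf-morphism, satisfies $\mu_C(\cl^r f(\alpha^r_{\bullet}))=f$. Naturality in $C$ is exactly the identity $\cl^r g(\cl^r f(\alpha^r_{\bullet}))=\cl^r(f\circ g)(\alpha^r_{\bullet})$ coming from functoriality of $\cl^r$, which yields $\mu_{C'}(\cl^r g(\beta_{\bullet}))=\mu_C(\beta_{\bullet})\circ g$ for $\apl{g\!:\!C'}{C}$.

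Conversely, assume $C$ represents $\cl^r$ through a natural isomorphism $\apl{\eta\!:\!\cl^r}{\Hom_{\Sch}(\_,C)}$. I would set $\alpha^r_{\bullet}:=\eta_C^{-1}(\Id_C)\in\cl^r C$, so that $(C,\alpha^r_{\bullet})$ is an $r$-rcf of $\pi$, and check it is the $r$-Urcf. For any $r$-rcf $(C',\beta'_{\bullet})$, naturality of $\eta$ along a morphism $\apl{f\!:\!C'}{C}$ gives $\eta_{C'}(\cl^r f(\alpha^r_{\bullet}))=\eta_C(\alpha^r_{\bullet})\circ f=\Id_C\circ f=f$, hence $\cl^r f(\alpha^r_{\bullet})=\eta_{C'}^{-1}(f)$; therefore $\cl^r f(\alpha^r_{\bullet})=\beta'_{\bullet}$ --- equivalently, by the quoted equivalence, $f$ is an $r$-rcf-morphism $\apl{f\!:\!(C',\beta'_{\bullet})}{(C,\alpha^r_{\bullet})}$ --- holds for exactly one $f$, namely $f=\eta_{C'}(\beta'_{\bullet})$. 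This is precisely the universal property of Definition~\ref{def:Urcf-of-pi}.

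I do not expect a genuine obstacle: the content is the bookkeeping of a Yoneda-type argument, and the two nontrivial facts (that $\cl^r$ is a functor, and the translation between $r$-rcf-morphisms and the equality $\beta_{\bullet}=\cl^r f(\beta'_{\bullet})$) are already in hand. The one point needing care is keeping track of the contravariance of $\cl^r$ and of $\Hom_{\Sch}(\_,\Cl_r)$ so that each naturality square is oriented correctly, exactly as in Proposition~\ref{def:functor S._}.
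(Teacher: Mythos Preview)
Your proposal is correct and is exactly the approach the paper intends: the text does not spell out a proof but states that Theorem~\ref{definicio2} ``is proved essentially the same way'' as Proposition~\ref{def:functor S._}, and your argument is precisely that Yoneda-type transcription, using the already-established equivalence between $r$-rcf-morphisms and the equality $\beta_{\bullet}=\cl^r f(\beta'_{\bullet})$.
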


\begin{corollary}
  If the \(r\)-Urcf
  \((\Cl_{r},\alpha^{r}_{\bullet})\)
  of the morphism \(\apl{\pi\!:\! \mathcal{S}}{B}\)
  exists, the following map is bijective.
  \[
    \begin{tikzpicture}[node
      distance=5cm]
      \node (1) {\( \Psi\hspace{.1cm}:\hspace{.1cm} \Cl_{r}(K)\)};
      \node [right of=1] (2) {\(\{\mbox{ordered \(r\)-relative
          clusters of }\pi\}\)};
      \node at (.5,-.7) {\( c\)};
      \node at (3.35,-.7) {\( (\underline{c}^0,\dots,\underline{c}^{r})\)};
      \node at (.95,-.7) (3) {}; \node at (2.24,-.7) (4) {};

      \path[->,font=\scriptsize,-stealth] (1) edge (2);
      \path[font=\scriptsize,|-stealth] (3) edge (4);
    \end{tikzpicture}
  \]
\end{corollary}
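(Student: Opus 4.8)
The plan is to read the statement off from Theorem~\ref{definicio2} (equivalently, from the universal property of $\Cl_r$) specialised to the test scheme $\s(K)$, exactly as Corollary~\ref{cor:the Usf has just all sections of the morphism} was obtained from Proposition~\ref{def:functor S._}. Two preliminary identifications are involved. First, $\Cl_r(K)=\Hom_{\Sch}(\s(K),\Cl_r)$, and under this a $K$-point is the embedding $\im_c$ of the corresponding closed point $c\in\Cl_r$. Second, since $\pi^{\s(K)}_0=\Id_{\s(K)}\times\pi$ is a family and the canonical isomorphisms $\s(K)\times\mathcal{S}\cong\mathcal{S}$, $\s(K)\times B\cong B$ are compatible with the recursive construction of the tower $\mathcal{S}^{\s(K)}_\bullet$ (blow-ups commuting with the trivial base change $\s(K)\to\s(K)$, by Lemma~\ref{lem:flat-and-baseChange-blowup}, see also Remark~\ref{rmk:assosiation-rcf-and-s}), an element of $\cl^r(\s(K))$ is the same datum as an ordered $r$-relative cluster of $\pi$; this is precisely the abuse of language used to define the ordered $r$-relative cluster associated to a closed point, so that $\Psi(c)=\cl^r\im_c(\alpha^r_\bullet)$.

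Granting this, the argument is a one-line application of the universal property in Definition~\ref{def:Urcf-of-pi}. For surjectivity, given an ordered $r$-relative cluster $\gamma_\bullet$ of $\pi$, regarded as an $r$-rcf $(\s(K),\gamma_\bullet)$ of $\pi$, there is a unique $r$-rcf-morphism $\apl{f\!:\! (\s(K),\gamma_\bullet)}{(\Cl_r,\alpha^r_\bullet)}$; the underlying morphism $\s(K)\to\Cl_r$ is a $K$-point $c\in\Cl_r(K)$, and, by the description of $\cl^r$ on morphisms, such a $c$ satisfies $\gamma_\bullet=\cl^r\im_c(\alpha^r_\bullet)=\Psi(c)$. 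For injectivity, if $\Psi(c)=\Psi(c')=:\gamma_\bullet$ then both $\im_c$ and $\im_{c'}$ are $r$-rcf-morphisms $\apl{(\s(K),\gamma_\bullet)}{(\Cl_r,\alpha^r_\bullet)}$, hence equal by the uniqueness clause of Definition~\ref{def:Urcf-of-pi}, i.e. $c=c'$. Equivalently: via Theorem~\ref{definicio2}, $\Psi$ is the evaluation at $\s(K)$ of the natural isomorphism $\cl^r(\_)\cong\Hom_{\Sch}(\_,\Cl_r)$, hence bijective, as one sees by chasing the universal element $\alpha^r_\bullet$ along $\im_c$.

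The only step that is not purely formal is the second preliminary identification, $\cl^r(\s(K))\cong\{\text{ordered }r\text{-relative clusters of }\pi\}$: one must unwind that forming $\s(K)\times(-)$ and then the iterated blow-ups $\mathcal{S}^{\s(K)}_0\leftarrow\mathcal{S}^{\s(K)}_1\leftarrow\cdots$ recovers canonically, compatibly with the sections $\alpha^r_\bullet$, the iterated blow-ups $\mathcal{S}=\mathcal{S}_0\leftarrow\mathcal{S}_1\leftarrow\cdots$ defining a classical ordered cluster. This is routine and uses no ingredient beyond Lemma~\ref{lem:flat-and-baseChange-blowup}; once it is in place, everything else is immediate from the universal property of $\Cl_r$.
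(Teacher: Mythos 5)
Your proof is correct and follows the same route the paper intends: the corollary is left without an explicit proof there precisely because it is the evaluation at $\s(K)$ of the natural isomorphism $\cl^r(\_)\cong\Hom_{\Sch}(\_,\Cl_r)$ from Theorem~\ref{definicio2}, combined with the identification of $\cl^r(\s(K))$ with ordered $r$-relative clusters that the paper records just after the definition of $r$-rcf and in the proposition on the cluster associated to a closed point. Your explicit treatment of that second identification (and of surjectivity via the universal property of Definition~\ref{def:Urcf-of-pi}) fills in exactly the steps the paper takes for granted.
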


\begin{theorem}
  \label{thr:existence of Cl_r}
  Given the family \(\apl{\pi\!:\! \mathcal{S}}{B}\),
  if \(B\)
  is projective and \(\mathcal{S}\)
  is piecewise quasiprojective then its \(r\)-Urcf
  \((\Cl_{r},\alpha^r_{\bullet})\)
  exists and the scheme \(\Cl_r\)
  is piecewise quasiprojective.
\end{theorem}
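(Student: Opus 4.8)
The plan is to argue by induction on \(r\). The case \(r=0\) is handled by Remark~\ref{equivalence sf - rcf}, which identifies \(0\)-rcf of \(\pi\) with sections families of \(\pi\) and thereby reduces the existence (and piecewise quasiprojectivity) of \(\Cl_0\) to that of the Usf of \(\pi\); since \(B\) is projective, hence proper, and \(\mathcal{S}\) is piecewise quasiprojective, Theorem~\ref{thm:exist-Usf} supplies the Usf \((X,\psi)\) with \(X\) piecewise quasiprojective, so \(\Cl_0\) exists and is piecewise quasiprojective.

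For the inductive step, assume \((\Cl_r,\alpha^r_\bullet)\) exists with \(\Cl_r\) piecewise quasiprojective. I would first note that \(\mathcal{S}^r_{r+1}\) is piecewise quasiprojective: it arises from \(\mathcal{S}^r_0=\Cl_r\times\mathcal{S}\) --- a product of piecewise quasiprojective schemes, hence piecewise quasiprojective --- by the \(r+1\) successive blow ups \(\Bl_{\alpha^r_0},\dots,\Bl_{\alpha^r_r}\), and the blow up of a (piecewise) quasiprojective scheme along a closed subscheme with finite type ideal sheaf is again (piecewise) quasiprojective, being projective over its base. Then I would regard \(\pi^r_{r+1}\colon \mathcal{S}^r_{r+1}\to\Cl_r\times B\) as the product morphism of \(\mathcal{S}^r_{r+1}\to B\) with \(\mathcal{S}^r_{r+1}\to\Cl_r\) and apply Corollary~\ref{thm:exist-W-Ussf} with \(W=\Cl_r\), obtaining a \(\Cl_r\)-Ussf \((\Cl_{r+1},\alpha,g)\) of \(\pi^r_{r+1}\) with \(\Cl_{r+1}\) piecewise quasiprojective. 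Setting \(p^r:=g\), I would give \(\Cl_{r+1}\) the \((r+1)\)-rcf structure \(\alpha^{r+1}_\bullet=(\alpha^{r+1}_0,\dots,\alpha^{r+1}_{r+1})\) as follows: put \((\alpha^{r+1}_0,\dots,\alpha^{r+1}_r):=\cl^r p^r(\alpha^r_\bullet)\), so that \(p^r\) is an \(r\)-rcf-morphism onto \((\Cl_r,\alpha^r_\bullet)\); the Cartesian square \eqref{eq:2} attached to it (which exists because the morphisms \(\pi^C_n\) are flat and finitely presented, via the blow-up base change Lemma~\ref{lem:flat-and-baseChange-blowup} and Proposition~\ref{prop:tetha of g}) identifies \(\mathcal{S}^{\Cl_{r+1}}_{r+1}\) with \(\mathcal{S}^r_{r+1}\times_{\Cl_r\times B}(\Cl_{r+1}\times B)\), and the relation \(\pi^r_{r+1}\circ\alpha=g\times\Id_B\) defining the \(\Cl_r\)-ssf \(\alpha\) says exactly that \(\alpha\) is the image, under \(\mathcal{S}^{\Cl_{r+1}}_{r+1}\to\mathcal{S}^r_{r+1}\), of a unique section \(\alpha^{r+1}_{r+1}\) of \(\pi^{\Cl_{r+1}}_{r+1}\).

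It then remains to verify the universal property of Definition~\ref{def:Urcf-of-pi}. Given any \((r+1)\)-rcf \((C,\beta_\bullet)\) of \(\pi\), its truncation \((C,\beta_\bullet|_r)\) is an \(r\)-rcf, so there is a unique \(r\)-rcf-morphism \(h\colon(C,\beta_\bullet|_r)\to(\Cl_r,\alpha^r_\bullet)\); by Remark~\ref{rmk:extra morphism, Cartesian square of base, extra C'-ssf}, pushing \(\beta_{r+1}\) forward along the \((r+1)\)-projection lift \(h_{r+1}\) makes \((C,h_{r+1}\circ\beta_{r+1},h)\) a \(\Cl_r\)-ssf of \(\pi^r_{r+1}\), so the universal property of the \(\Cl_r\)-Ussf yields a unique \(f\colon C\to\Cl_{r+1}\) with \(h_{r+1}\circ\beta_{r+1}=\alpha\circ(f\times\Id_B)\), automatically satisfying \(p^r\circ f=h\). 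The functoriality of \(\cl^r\) gives \(\cl^r f(\cl^r p^r(\alpha^r_\bullet))=\cl^r h(\alpha^r_\bullet)=\beta_\bullet|_r\), so \(f\) is an \(r\)-rcf-morphism on the truncations, and a short chase in the fibre-product square for \(\mathcal{S}^{\Cl_{r+1}}_{r+1}\) --- comparing both candidate maps \(f_{r+1}\circ\beta_{r+1}\) and \(\alpha^{r+1}_{r+1}\circ(f\times\Id_B)\) after the two projections \(\mathcal{S}^{\Cl_{r+1}}_{r+1}\to\mathcal{S}^r_{r+1}\) and \(\pi^{\Cl_{r+1}}_{r+1}\) --- shows that \(f\) also satisfies the \((r+1)\)-section lift condition, hence is an \((r+1)\)-rcf-morphism \((C,\beta_\bullet)\to(\Cl_{r+1},\alpha^{r+1}_\bullet)\). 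Uniqueness follows by reversing the reduction: an arbitrary such morphism composes with \(p^r\) to an \(r\)-rcf-morphism, hence equals \(h\), and its image under the reduction is the \(\Cl_r\)-ssf above, so it must coincide with \(f\) by uniqueness in the \(\Cl_r\)-Ussf's universal property. Alternatively, one may phrase the whole induction as the claim that \(\Cl_{r+1}\) represents \(\cl^{r+1}\) and invoke Theorem~\ref{definicio2}.

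The step I expect to be the main obstacle is this last one: the conceptually clear but notationally heavy verification that an \((r+1)\)-rcf over \(C\) is the same datum as an \(r\)-rcf-morphism \(C\to\Cl_r\) together with a section of the pull back of \(\pi^r_{r+1}\) along it --- equivalently, a \(\Cl_r\)-split sections family of \(\pi^r_{r+1}\) --- so that Corollary~\ref{thm:exist-W-Ussf} furnishes exactly the extra section \(\beta_{r+1}\); making this airtight requires keeping precise track of which squares are Cartesian (this is where Proposition~\ref{prop:tetha of g} and Lemma~\ref{lem:flat-and-baseChange-blowup} do the work) and checking both existence and uniqueness of the classifying morphism. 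The piecewise-quasiprojectivity part, by contrast, is routine.
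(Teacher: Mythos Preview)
Your proposal is correct and follows essentially the same approach as the paper: induction on \(r\), with the base case handled by Remark~\ref{equivalence sf - rcf} and Theorem~\ref{thm:exist-Usf}, and the inductive step by applying Corollary~\ref{thm:exist-W-Ussf} to \(\pi^r_{r+1}\) with \(W=\Cl_r\) to obtain the \(\Cl_r\)-Ussf, then identifying its universal property with that of the \((r+1)\)-Urcf. The only cosmetic difference is that the paper packages the final verification as a natural isomorphism \(\cl^r\cong\Hom(\_,\Cl_r)\) (constructing maps \(\nu\) and \(\mu\) and leaving their mutual inversity to the reader), whereas you verify Definition~\ref{def:Urcf-of-pi} directly; you yourself note this equivalence via Theorem~\ref{definicio2}.
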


\begin{proof}
  We work by induction over \(r\). For \(r=0\), Remark~\ref{equivalence sf - rcf}
  and Theorem~\ref{thm:exist-Usf} says that the
  0-Urcf of \(\pi\)
  exists, it is its Usf which is piecewise quasiprojective.\\

  Assume that the \((r-1)\)-Urcf
  \((\Cl_{r-1},\alpha^{r-1}_{\bullet})\)
  of \(\pi\)
  exists with \(\Cl_{r-1}\)
  piecewise quasiprojective.

  The scheme \(\mathcal{S}^{r-1}_{r}\) 
  is piecewise quasiprojective. By Corollary~\ref{thm:exist-W-Ussf} the
  \(\left( \Cl_{r-1} \right)\)-Ussf
  of \(\apl{\pi^{r-1}_{r}\!:\! \mathcal{S}^{r-1}_r}{\Cl_{r-1}\times B}\)
  exists. We denote it by \((\Cl_r,\gamma_r,p^{r-1})\),
  note that \(\Cl_r\) is piecewise quasiprojective.

  We will see that the scheme \(\Cl_r\) represents the functor \(\cl^r\).
  First we will construct a morphism of functors \(\nu\) from \(\cl^r\) to
  \(\Hom_{Sch}(\_,\Cl_r)\) and then its inverse \(\mu\).\\

  Given a scheme \(Z\) and a \(\beta_{\bullet}\in\cl^r(Z)\), the couple
  \((Z,\beta_{\bullet}|_{r-1})\) is an \((r-1)\)-rcf of \(\pi\). By the universal property
  of the \((r-1)\)-Urcf \((\Cl_{r-1},\alpha^{r-1}_{\bullet})\) of \(\pi\)
  there is a unique morphism \(\apl{f\!:\!Z}{\Cl_{r-1}}\) such that
  \(\beta_{\bullet}|_{r-1}=\cl^{r-1}f(\alpha^{r-1}_{\bullet})\). Since \((Z,\beta)\) is
  an \(r\)-rcf of \(\pi\), by Remark~\ref{rmk:extra morphism, Cartesian square of base, extra
    C'-ssf} the triplet \((Z,(f_r\circ\beta_r),f)\) is a \((\Cl_{r-1})\)-ssf of
  \(\pi^{r-1}_r\). By the universal property of the \(\left( \Cl_{r-1} \right)\)-Ussf \((\Cl_r,\gamma_r,p^{r-1})\)
  of \(\pi^{r-1}_{r}\) there is a unique morphism \(\apl{\nu_Z(\beta_{\bullet}) \!:\!Z}{\Cl_r}\) such
  that \(f_r\circ\beta_r=\gamma_r\circ(\nu_Z(\beta_{\bullet})\times \Id_B)\). Then \(\nu\) is clearly a morphism of functors.\\

  Given a scheme \(Z\) and a morphism \(\apl{h\!:\!Z}{\Cl_r}\), we define
  \((\beta_0,\dots,\beta_{r-1})=\cl^{r-1}(p^{r-1}\circ
  h)(\alpha^{r-1}_{\bullet})\). Observe that \(\mathcal{S}^Z_r\) is
  \(\Bl_{\beta_{r-1}}(\mathcal{S}^Z_{r-1})\) which endowed with the
  morphism \(\apl{(p^{r-1}\circ h)_r\!:\!\mathcal{S}^Z_r}{\mathcal{S}^{r-1}_r}\).
  By Remark~\ref{rmk:extra morphism, Cartesian square of base,
    extra C'-ssf} the scheme \(\mathcal{S}^Z_r\)
  is the fibre product
  \((Z\times B)\times_{(\Cl_{r-1}\times B)} \mathcal{S}^{r-1}_r\).
  We use this fact to define the morphism
  \(\apl{\beta_r\!:\!Z\times B}{\mathcal{S}^Z_r}\) as
  \[
    \beta_{r}:=\Id_{(Z\times B)}\times_{(\Cl_{r-1}\times B)} (\gamma_r\circ
    (h\times \Id_B))
  \] which is clearly well defined. Then \(\mu_z(h):=(\beta_0,\dots,\beta_r)\)
  and \(\mu\) is a morphism of functors.\\

  We leave to the reader to check that \(\nu\) and \(\mu\) are inverse one to each other.
\end{proof}

\begin{remark}\label{rmk:definition-alpha-r-r}
Given the \((r-1)\)-Ussf \((\Cl_{r-1},\alpha^{r-1}_{\bullet})\) of \(\pi\) and the scheme \(\Cl_r\), the sequence of
morphisms \(\alpha_{\bullet}^r\) is \(\mu_{\Cl_r}(\Id_{\Cl_r})\). In particular,
\[
     \alpha^r_{r}=\Id_{(\Cl_r\times B)}\times_{(\Cl_{r-1}\times B)} \gamma_r
\] and \(\gamma_r=p^{r-1}_r\circ\alpha^r_r\).
\end{remark}
\section{\(\Cl_{r+1}\) as a blowup scheme}\label{family blowups}
In this section we consider a  family \(\apl{\pi \!:\!  \mathcal{S}}{ B}\)
with \(\mathcal{S}\) quasiprojective and \(B\) projective and regular. By
Theorem~\ref{thr:existence of Cl_r} the \(r\)-Urcf \((\Cl_r,\alpha^r_{\bullet})\)
of \(\pi\) exists for all integer \(r\ge 0\).

Note from the proof of Theorem~\ref{thr:existence of Cl_r} that for all integer \(r\ge
0\) the triplet 
\((\Cl_r,\gamma_r,p^{r-1})\) is the \(\left(\Cl_{r-1}\right)\)-Ussf of
\(\apl{\pi^{r-1}_r\!:\!\mathcal{S}^{r-1}_r}{\Cl_r\times B}\), where
\(\apl{\gamma_r\!:\!\Cl_r\times B}{\mathcal{S}^{r-1}_r}\) is \(\gamma_r=p^{r-1}_r\circ\alpha^r_r\). 

\begin{notation}
  Given an \(r\ge 0\) and a \(e\in\Cl_r\) we fix the following notation.
  \begin{itemize}
  \item From the \(\left(\Cl_{r-1}\right)\)-Ussf \((\Cl_r,\gamma_r,p^{r-1})\) of
    \(\pi_r^{r-1}\), the \(\left(\Cl_{r-1}\right)\)-split section
    \(\apl{\tilde{e}\!:\!B^e}{\left(\mathcal{S}_r^{r-1}\right)_{p^{r-1}(e)} }\)
    associated to \(e\).
  \item From the \(r\)-Urcf \((\Cl_{r},\alpha^{r}_{\bullet})\) of \(\pi\), the
    ordered \(r\)-relative cluster \((\underline{e}^0,\dots,\underline{e}^{r})\)
    associated to \(e\), where
    \(\apl{\underline{e}^{r} \!:\! B^{e} }{\left(\mathcal{S}_{r}^{r}\right)_{e}}\).
  \item From the sf \((\Cl_{r},\gamma_{r})\) of
    \(\overapl{\mathcal{S}^{r-1}_{r} }{\Cl_{r-1}\times
      B}{\pi^{r-1}_{r}}\rightarrow B\) there is the section
    \(\apl{\underline{e} \!:\! B^{e}}{\mathcal{S}^{r-1}_{r}}\) associated to
    \(e\).
  \end{itemize}
\end{notation}


\begin{remark}\label{rmk:relative-section-equal-to-last-cluster}
  For integers \(n,m\ge 0\) and \(d\in \Cl_n\) we denote 
  \(\apl{(i_m^n)_{d} \!:\!(\mathcal{S}^n_m)_{d} }{\mathcal{S}^n_m}\) the
  inclusion. The relation between \(\underline{e}\), \(\tilde{e}\) and
  \(\underline{e}^{r}\)
  are
  \[
    \underline{e}=\left(i_{r}^{r-1}\right)_{p^{r-1}(e)}\circ\tilde{e}
    =p^{r-1}_{r}\circ \left(i_{r}^{r}\right)_{e}\circ\underline{e}^{r}.
  \]
\end{remark}

Now fix an integer \(r\ge 0\) for the rest of the section.
Consider \(\Cl_r^2:=\Cl_r\times_{\Cl_{r-1}} \Cl_r\) with the projections
\(\apl{q1,q2\!:\! \Cl_r^2}{\Cl_r}\)
over the first and the second factor respectively and \(\Delta_r\) its diagonal
morphism. Observe that
\(p^{r-1}\circ q1= p^{r-1}\circ q2\).

We call \(E\)
the exceptional divisor of the blow up
\(\apl{\bl_{r+1}^r \!:\! \mathcal{S}^r_{r+1} }{\mathcal{S}^r_{r} }\) (see Definition~\ref{def:Urcf-of-pi}),
whose centre is the image of \(\alpha^r_{r}\),
a closed subscheme (note that \(\alpha_r^r\)
is a section so its image is closed).\\

Our goal is to describe part of the components of \(\Cl_{r+1}\) as a blow up of
\(\Cl_r\times_{\Cl_{r-1}}\Cl_r\) in analogy with
Kleiman~\citep{kleiman-multiple-point-1981}.
Observe that the component of \(\Cl_{r+1}\) such that for all its points \(b\) the image of
\(\underline{b}\) is contained in \(E\) can not be emerge from this blow up.\\

For the following definition note from the proof of Theorem~\ref{thr:existence of
  Cl_r} that the scheme \(\mathcal{S}^r_r\)
is the fibre product
\(\left( \Cl_r\times B \right)\times_{(\Cl_{r-1}\times B)}\mathcal{S}^{r-1}_r\).

\begin{definition}\label{def:morphism-rho-and-F}
  We define \(\apl{\rho\!:\! \Cl_r^2\times B}{\mathcal{S}^r_r}\)
  as the product morphism,
  \[
    (q1\times \Id_B)\times_{(\Cl_{r-1}\times B)} \left( \gamma_{r}\circ (q2\times
      \Id_B)\right)
  \]
  which is clearly well defined and makes the following diagram commute.
  \[
    \begin{tikzcd}
      & \Cl_r^2\times B \dar[dashed]{\rho} \dlar[swap]{q1\times \Id_B}
      \drar{\gamma_r\circ (q2\times \Id_B)}
      \\
      \Cl_r\times B & \mathcal{S}^{r}_{r} \rar[swap]{p^{r-1}_{r}} \lar{\pi_{r}^r}
      &  \mathcal{S}^{r-1}_{r}   \\
    \end{tikzcd}
  \]
  We also define the morphism \(\apl{F\!:\!\Cl_{r+1}}{\Cl_r^2}\) as the product \(p^{r}\times_{\Cl_{r-1}}f^{r}\)
  which is well defined by Remark~\ref{rmk:projection-of-projection-is-equal-to-the-projection-of-fr}.
\end{definition}


Explicitly, over the closed points, \(\rho\)
sends each \((c,d,t)\)
to \((c,\underline{d}^{r}(t))\), which is possible because
\(p^{r-1}(c)=p^{r-1}(d)\).

Given a closed \(b\in\Cl_{r+1}\)  the pair of ordered \(r\)-relative
clusters associated to the image \(F(b)=(c,d)\in\Cl_{r}^2\) is \(\left(
  (\underline{b}^0,\dots,\underline{b}^{r-1},\underline{b}^r),(\underline{b}^0,\dots,\underline{b}^{r-1},\bl^r_{r+1}\circ
  \underline{b}^{r+1}) \right)\).

\begin{lemma}\label{lem:diagrama commutatiu gamma}
  The following diagram commutes.
  \[
    \begin{tikzcd}
      \Cl_{r+1}\times B \rar{\gamma_{r+1}} \dar[swap]{F\times \Id_B} & \mathcal{S}^{r}_{r+1} \dar{\bl_{r+1}^r} \\
      \Cl_r^2\times B \rar{\rho}  &  \mathcal{S}^r_r  \\
    \end{tikzcd}
  \]
\end{lemma}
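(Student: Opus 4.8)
The plan is to prove the asserted equality of morphisms $\Cl_{r+1}\times B\to\mathcal{S}^r_r$ by composing with the two projections of the fibre product $\mathcal{S}^r_r=(\Cl_r\times B)\times_{(\Cl_{r-1}\times B)}\mathcal{S}^{r-1}_r$ (the description recorded just before Definition~\ref{def:morphism-rho-and-F}), namely $\pi^r_r$ onto the first factor and $p^{r-1}_r$ onto the second; two morphisms into a fibre product that agree after both projections are equal. The component over $\Cl_r\times B$ is immediate: on the one hand $\pi^r_r\circ\bl^r_{r+1}=\pi^r_{r+1}$ and $\pi^r_{r+1}\circ\gamma_{r+1}=p^r\times\Id_B$, since $\gamma_{r+1}$ is a $\Cl_r$-morphism (Remark~\ref{rmk:equivalent-def-of-ssf}); on the other hand $\pi^r_r\circ\rho=q1\times\Id_B$ by the defining diagram of $\rho$, and $q1\circ F=p^r$ because $F=p^r\times_{\Cl_{r-1}}f^r$.

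For the component over $\mathcal{S}^{r-1}_r$, the defining diagram of $\rho$ gives $p^{r-1}_r\circ\rho=\gamma_r\circ(q2\times\Id_B)$, and $q2\circ F=f^r$, so the right-hand side becomes $\gamma_r\circ(f^r\times\Id_B)$. Write $\delta_r:=\bl^{r+1}_{r+1}\circ\alpha^{r+1}_{r+1}$ for the last term of the $r$-rcf of $\pi$ classified by $f^r$ (Notation~\ref{not:pr-and-fr}, with $r+1$ in place of $r$). Using $\gamma_r=p^{r-1}_r\circ\alpha^r_r$ (Remark~\ref{rmk:definition-alpha-r-r}) and that the $r$-th lift $f^r_r$ of $f^r$ is an $r$-section lift, i.e.\ $f^r_r\circ\delta_r=\alpha^r_r\circ(f^r\times\Id_B)$, the right-hand side equals $p^{r-1}_r\circ f^r_r\circ\delta_r$. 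On the left, $\gamma_{r+1}=p^r_{r+1}\circ\alpha^{r+1}_{r+1}$ (Remark~\ref{rmk:definition-alpha-r-r} with $r+1$ in place of $r$); since $p^r_{r+1}=\theta(p^r_r)$, Corollary~\ref{coro:morphism-between-blow-ups} makes the square relating $\bl^{r+1}_{r+1}$, $\bl^r_{r+1}$, $p^r_r$ and $p^r_{r+1}$ Cartesian, hence $\bl^r_{r+1}\circ p^r_{r+1}=p^r_r\circ\bl^{r+1}_{r+1}$, and the left-hand side equals $p^{r-1}_r\circ p^r_r\circ\delta_r$.

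Thus everything reduces to the identity $p^{r-1}_r\circ p^r_r=p^{r-1}_r\circ f^r_r$ of morphisms $\mathcal{S}^{r+1}_r\to\mathcal{S}^{r-1}_r$, which I expect to be the main obstacle. By Remark~\ref{rmk:projection-of-projection-is-equal-to-the-projection-of-fr} the $(r-1)$-rcf-morphisms $p^{r-1}\circ p^r$ and $p^{r-1}\circ f^r$ coincide, and they carry the same source and target rcf structures (the structures on $\Cl_{r+1}$ used to define $p^r$ and $f^r$ agree up to index $r-1$), so their canonical lift sequences coincide termwise. It then remains to recognise $p^{r-1}_r\circ p^r_r$ and $p^{r-1}_r\circ f^r_r$ as the $r$-th terms of those sequences, which follows from the recursive construction of the lifts in Definition~\ref{def:r-rcf-morphism} and Remark~\ref{rmk:extra morphism, Cartesian square of base, extra C'-ssf} together with the compatibility of $\theta$ with composition, itself a consequence of pasting Cartesian squares and the uniqueness in Corollary~\ref{coro:morphism-between-blow-ups}. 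Once this is in place the two composites agree, so both components match and the diagram commutes. A bookkeeping point to watch throughout is precisely that the $r$-rcf structure on $\Cl_{r+1}$ used to define $f^r$ agrees with $\alpha^{r+1}_\bullet$ up to index $r$, so that $\mathcal{S}^{r+1}_r$, $\pi^{r+1}_r$ and the squares above are unambiguous.
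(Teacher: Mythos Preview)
Your argument is correct and follows the same route the paper has in mind: the paper's proof is a single sentence asserting that the lemma is ``an immediate consequence of Remark~\ref{rmk:definition-alpha-r-r} and the definitions of $F$ and $\rho$,'' and what you have done is spell out that consequence in full, checking both projections of the fibre product $\mathcal{S}^r_r=(\Cl_r\times B)\times_{(\Cl_{r-1}\times B)}\mathcal{S}^{r-1}_r$. You have correctly isolated the only point requiring care, namely the identity $p^{r-1}_r\circ p^r_r = p^{r-1}_r\circ f^r_r$, and your justification via Remark~\ref{rmk:projection-of-projection-is-equal-to-the-projection-of-fr} together with the compatibility of projection lifts under composition (pasting Cartesian squares and uniqueness in Corollary~\ref{coro:morphism-between-blow-ups}) is sound. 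One small wording fix: in your last sentence the two $r$-rcf structures on $\Cl_{r+1}$ agree only up to index $r-1$, not up to index $r$ (they differ precisely at the $r$-th section); but that is exactly what is needed for $\mathcal{S}^{r+1}_r$ and $\pi^{r+1}_r$ to be unambiguous, so your conclusion is unaffected.
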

\begin{proof}
  It is an immediate consequence of Remark~\ref{rmk:definition-alpha-r-r} and the definitions of \(F\) and \(\rho\) .
\end{proof}
\begin{remark}\label{rmk:rho-deltaR=alpha}
  Observe that \(\rho\circ (\Delta_r\times \Id_{B})=\alpha^r_{r}\), so the centre
  of the blow up \(\apl{\bl^r_{r+1}\!:\!\mathcal{S}^r_{r+1}}{\mathcal{S}^r_r}\) is \(\rho(\Delta_r(\Cl_r)\times B)\). 
\end{remark}

\begin{lemma}\label{lem:clr2-as-universal-split}
  The triplet \((\Cl_r^2,\rho, q1)\)
  is the \(\Cl_r\)-Ussf
  of \(\apl{\pi^r_r \!:\! \mathcal{S}^r_{r} }{\Cl_r\times B }\).
\end{lemma}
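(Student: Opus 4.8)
The plan is to verify the universal property of the $\Cl_r$-Ussf directly, using the characterisation of $\Cl_r^2$ as a fibre product together with the known universal properties established in the proof of Theorem~\ref{thr:existence of Cl_r}. Recall that $\mathcal{S}^r_r \cong (\Cl_r\times B)\times_{(\Cl_{r-1}\times B)}\mathcal{S}^{r-1}_r$, so a $\Cl_r$-ssf of $\pi^r_r$ amounts to giving, for a $\Cl_r$-scheme $\overapl{Y}{\Cl_r}{\phi}$, a section $\apl{\rho_Y\!:\!Y\times B}{\mathcal{S}^r_r}$ whose composition with $p^{r-1}_r$ lands compatibly in $\mathcal{S}^{r-1}_r$; by the fibre-product description this is the same data as a $\Cl_{r-1}$-morphism from $Y$ (via $p^{r-1}\circ\phi$) to $\mathcal{S}^{r-1}_r$ making the relevant square commute, i.e. a $\Cl_{r-1}$-ssf of $\pi^{r-1}_r$. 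First I would make this translation precise, so that a $\Cl_r$-ssf of $\pi^r_r$ is shown to be equivalent to a pair $(\phi, \psi)$ consisting of the structure morphism $\apl{\phi\!:\!Y}{\Cl_r}$ together with a $\Cl_{r-1}$-split section of $\pi^{r-1}_r$ over $Y$ (with the $\Cl_{r-1}$-scheme structure induced by $p^{r-1}\circ\phi$).

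Next I would apply the universal property of the $\Cl_{r-1}$-Ussf $(\Cl_r,\gamma_r,p^{r-1})$ of $\pi^{r-1}_r$ established in Theorem~\ref{thr:existence of Cl_r}: the $\Cl_{r-1}$-split section of $\pi^{r-1}_r$ over $Y$ corresponds to a unique morphism $\apl{G\!:\!Y}{\Cl_r}$ compatible with the structure morphisms to $\Cl_{r-1}$. The key point is that the resulting map $G$ must \emph{agree} with the given structure morphism $\phi\!:\!Y\to\Cl_r$ exactly when the original $\Cl_r$-ssf data is encoded by the \emph{diagonal} second factor, which is precisely how $\rho$ is built in Definition~\ref{def:morphism-rho-and-F} from $q1$ and $q2$: one factor records the structure map to $\Cl_r$, the other records the ``section data''. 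Unwinding, a $\Cl_r$-ssf of $\pi^r_r$ over $Y$ gives a unique morphism $\apl{H\!:\!Y}{\Cl_r^2}$ with $q1\circ H=\phi$ and $q2\circ H = G$, and one checks $\rho\circ(H\times\Id_B)$ recovers the given section; conversely any such $H$ produces a $\Cl_r$-ssf. This is exactly the assertion that $(\Cl_r^2,\rho,q1)$ is the $\Cl_r$-Ussf.

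I would organise the write-up around the commuting diagram of Definition~\ref{def:morphism-rho-and-F}, checking: (i) $(\Cl_r^2,\rho,q1)$ is indeed a $\Cl_r$-ssf of $\pi^r_r$ (the factorisation $\pi^r_r\circ\rho = q1\times\Id_B$ is the left triangle of that diagram, so $\rho$ is a $\Cl_r$-morphism by Remark~\ref{rmk:equivalent-def-of-ssf}); (ii) given any $\Cl_r$-ssf $(Y,\rho_Y,\phi)$, existence and uniqueness of $\apl{H\!:\!Y}{\Cl_r^2}$ with $\rho_Y=\rho\circ(H\times\Id_B)$, by producing $q2\circ H$ from the $\Cl_{r-1}$-Ussf property of $(\Cl_r,\gamma_r,p^{r-1})$ and $q1\circ H$ as $\phi$, then using the universal property of the fibre product $\Cl_r^2=\Cl_r\times_{\Cl_{r-1}}\Cl_r$ to assemble them (the compatibility $p^{r-1}\circ q1 = p^{r-1}\circ q2$ needed for this is noted just before the statement). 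The main obstacle I anticipate is purely bookkeeping: carefully matching the fibre-product identifications $\mathcal{S}^r_r\cong(\Cl_r\times B)\times_{(\Cl_{r-1}\times B)}\mathcal{S}^{r-1}_r$ and $\mathcal{S}^Y_r\cong(Y\times B)\times_{(\Cl_{r-1}\times B)}\mathcal{S}^{r-1}_r$ with the explicit formula $\rho=(q1\times\Id_B)\times_{(\Cl_{r-1}\times B)}(\gamma_r\circ(q2\times\Id_B))$, so that ``$\rho_Y$ is a $\Cl_r$-split section'' literally becomes ``$q2\circ H$ is the morphism classified by the $\Cl_{r-1}$-Ussf $\gamma_r$''. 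Once the identifications are set up, both existence and uniqueness of $H$ are formal consequences of the two universal properties already in hand, with no further geometric input required.
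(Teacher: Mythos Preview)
Your proposal is correct and rests on the same three ingredients the paper uses: the fibre-product description $\mathcal{S}^r_r\cong(\Cl_r\times B)\times_{(\Cl_{r-1}\times B)}\mathcal{S}^{r-1}_r$, the $\Cl_{r-1}$-Ussf property of $(\Cl_r,\gamma_r,p^{r-1})$ from Theorem~\ref{thr:existence of Cl_r}, and the universal property of the fibre product $\Cl_r^2=\Cl_r\times_{\Cl_{r-1}}\Cl_r$. The only organisational difference is that the paper first invokes Corollary~\ref{thm:exist-W-Ussf} to produce an abstract $\Cl_r$-Ussf $(Y,\beta,f)$ and then builds mutually inverse morphisms $\apl{L\!:\!\Cl_r^2}{Y}$ and $\apl{H\!:\!Y}{\Cl_r^2}$, whereas you verify the universal property for $(\Cl_r^2,\rho,q1)$ directly against an arbitrary $\Cl_r$-ssf; your route is slightly more self-contained in that it does not rely on the abstract existence result, but the mathematical content is the same.
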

\begin{proof}
  The schemes \(\Cl_r\) and \(\mathcal{S}^r_r\) are piecewise quasiprojective and
  \(B\) is projective. By Corollary~\ref{thm:exist-W-Ussf} the \(\Cl_r\)-Ussf
  \(\left( Y,\beta,f \right)\)
  of \(\pi^{r}_{r}\)
  exists.  The triplet \((\Cl_r^2,\rho, q1)\)
  is a \(\Cl_r\)-ssf
  of \(\pi^{r}_{r}\).
  So there is a unique morphism \(\apl{L \!:\! \Cl_r^2 }{Y }\)
  such that \(\rho=\beta\circ \left( L\times \Id_B \right)\)
  and \(q1=f\circ L\).
  
  In fact, the morphism \(L\)
  is an isomorphism, we are going to find its inverse
  \(\apl{H \!:\! Y }{\Cl_r^2 }\).
  By the universal property of the Cartesian square of the following diagram, to
  construct \(H\)
  such that \(H\circ L=\Id_{\Cl_r^2}\)
  it is enough to find a morphism \(\apl{g \!:\! Y }{\Cl_r }\)
  which makes the following diagram commute.
  \[
    \begin{tikzcd}[row sep=1.1em, column sep=1.35em]
      Y \ar[bend left=20]{drrr}{g} \ar[bend right=20, swap]{dddr}{f}  & & & \\
      & \Cl_r^2 \ar{rr}{q2} \ar{dd}{q1} \ar{ul}{L}  & & \Cl_r
      \ar{dd}{p^{r-1}} \\
      \\
      & \Cl_r \ar{rr}{p^{r-1}} & & \Cl_{r-1} \\
    \end{tikzcd}
  \]
  Furthermore, by the universal properties of the \(\Cl_r\)-Ussf
  \((Y,\beta,f)\)
  and the fibre product
  \(\mathcal{S}^r_r=\left( \Cl_r\times B \right)\times_{(\Cl_{r-1}\times
    B)}\mathcal{S}^{r-1}_r\)
  the equality \(L\circ H = Id_Y\)
  holds if \(p^{r-1}_{r}\circ\beta=\gamma_r\circ(g\times \Id_B)\).\\

  The triplet \(\left( \Cl_r,\gamma_r,p^{r-1} \right)\)
  is the \(\left( \Cl_{r-1} \right)\)-Ussf
  of the morphism
  \(\apl{\pi^{r-1}_{r} \!:\! \mathcal{S}^{r-1}_{r} }{\Cl_{r-1}\times B }\).
  The triplet \(\left( Y,(p^{r-1}_{r}\circ\beta),(p^{r-1}\circ f) \right)\)
  is a \(\left( \Cl_{r-1} \right)\)-ssf
  of \(\pi^{r-1}_{r}\).
  So there is a unique morphism \(\apl{g \!:\! Y }{\Cl_r }\)
  such that \(p^{r-1}_{r}\circ\beta=\gamma_r\circ(g\times \Id_B)\)
  and \(p^{r-1}\circ f=p^{r-1}\circ g\).
  Finally the triplet
  \(\left( \Cl_r^2,(p^{r-1}_{r}\circ\rho),(p^{r-1}\circ q2) \right)\)
  is a \(\left( \Cl_{r-1} \right)\)-ssf
  of \(\pi^{r-1}_{r}\)
  too, but by definition of \(\rho\),
  \(p^{r-1}_{r}\circ\rho = \gamma_r\circ(q2\times \Id_B)\),
  so by the universal property of the \(\left( \Cl_{r-1} \right)\)-Ussf
  \(\left( \Cl_r,\gamma_r,p^{r-1} \right)\)
  the morphism \(q2\)
  is the unique morphism satisfying this equality and \(q2=g\circ L\).
\end{proof}

\begin{notation}
  Given \(e\in\Cl^2_r\) we fix the following notation.
  \begin{itemize}
  \item From the sf \((\Cl_r^2,\rho)\) of \(\overapl{\mathcal{S}^r_r
    }{\Cl_r\times B }{\pi^r_r}\rightarrow B\) there is the section
    \(\apl{\underline{e}_2 \!:\! B^{e} }{\mathcal{S}^r_r }\) associated to \(e\)
    (with a subindex intended to highlight its difference to the \(\apl{\underline{c}\!:\!B^c}{\mathcal{S}^r_r}\) for a given \(c\in\Cl_r\)).
  \item From the \(\Cl_r\)-Ussf \((\Cl_r^2,\rho,q1)\)
    of \(\apl{\pi^r_r\!:\!\mathcal{S}^r_r}{\Cl_r\times B}\)
    the \(\Cl_r\)-split
    section \(\apl{\tilde{e}\!:\!B^e}{(\mathcal{S}^r_r)_{q1(e)}}\)
    associated to \(e\).
  \end{itemize}
\end{notation}
\begin{remark}\label{rmk:rel-clrs-sections}
  The relation between \(\underline{e}_2\) and \(\tilde{e}\) is \(\underline{e}_2=(i^r_r)_{q1(e)}\circ\tilde{e}\).
\end{remark}

Given a closed \((c,d)\in\Cl_r^2\) and \(t\in B^{(c,d)}\), by definition of \(\rho\), \(\underline{(c,d)}_2(t)=(c,\underline{d}^r(t))\).
Furthermore, given \(c\in \Cl_r\), \(\underline{c}=\underline{\Delta_r(c)}_{2}\).

\begin{proposition}\label{prop:relation-sections-cl(r+1)-clr2}
  Given a point \(b\in \Cl_{r+1}\),
  its image \(F(b)=e\in \Cl_r^2\)
  belongs to \(\Delta_r(\Cl_r)\)
  if and only if \(\underline{b}(B^{b})\subseteq E\),
  and given \(t\in B^b\), \(\underline{b}(t)\) belongs to \(E\) if and only if
  \[\underline{e}_2(t)\in\rho \left( \Delta_r(\Cl_r)\times B \right).\]
\end{proposition}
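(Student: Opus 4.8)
The plan is to deduce everything from the single identity of Lemma~\ref{lem:diagrama commutatiu gamma} together with the elementary fact that the exceptional divisor of a blow up is, as a set, the preimage of its centre. Put $e=F(b)$. By Definition~\ref{def:associeted section} we have $\underline{b}=\gamma_{r+1}\circ(\im_b\times\Id_B)$ and $\underline{e}_2=\rho\circ(\im_e\times\Id_B)$; since $F\circ\im_b$ factors through $\im_e$ (its image is the single point $e$), precomposing the identity $\bl^r_{r+1}\circ\gamma_{r+1}=\rho\circ(F\times\Id_B)$ of Lemma~\ref{lem:diagrama commutatiu gamma} with $\im_b\times\Id_B$ gives $\bl^r_{r+1}\circ\underline{b}=\underline{e}_2$, up to the canonical surjection $B^b\twoheadrightarrow B^e$, which I suppress from the notation. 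By Remark~\ref{rmk:rho-deltaR=alpha} the centre of $\bl^r_{r+1}$ is $\rho(\Delta_r(\Cl_r)\times B)$, which is the image of the closed embedding $\alpha^r_r$, and $E=(\bl^r_{r+1})^{-1}\!\bigl(\rho(\Delta_r(\Cl_r)\times B)\bigr)$ as subsets. Hence, for $t\in B^b$, one has $\underline{b}(t)\in E$ iff $\bl^r_{r+1}(\underline{b}(t))=\underline{e}_2(t)$ lies in $\rho(\Delta_r(\Cl_r)\times B)$ --- this is the second assertion --- and letting $t$ vary, $\underline{b}(B^b)\subseteq E$ iff $\underline{e}_2(B^e)\subseteq\rho(\Delta_r(\Cl_r)\times B)=\alpha^r_r(\Cl_r\times B)$, using that $B^b\to B^e$ is surjective.

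Thus the first equivalence reduces to the claim that $\underline{e}_2(B^e)\subseteq\alpha^r_r(\Cl_r\times B)$ if and only if $e\in\Delta_r(\Cl_r)$. The ``if'' implication is immediate: if $\im_e=\Delta_r\circ g$ then, by Remark~\ref{rmk:rho-deltaR=alpha}, $\underline{e}_2=\rho\circ(\Delta_r\times\Id_B)\circ(g\times\Id_B)=\alpha^r_r\circ(g\times\Id_B)$, whose image obviously lies in $\alpha^r_r(\Cl_r\times B)$; combined with the first paragraph this yields the $\Leftarrow$ part of the statement.

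For the ``only if'' implication --- the crux of the proof --- I would argue as follows. Since $\pi^r_r\circ\alpha^r_r=\Id$, the morphism $\alpha^r_r\circ\pi^r_r$ is a retraction of $\mathcal{S}^r_r$ onto the closed subscheme $\alpha^r_r(\Cl_r\times B)$ that restricts to the identity there. The scheme $B^e=\s(k(e))\times_K B$ is reduced (as $B$ is a variety over the algebraically closed field $K$), so the morphism $\underline{e}_2$, whose set-theoretic image lies in the closed subscheme $\alpha^r_r(\Cl_r\times B)$, factors through it; hence $\alpha^r_r\circ\pi^r_r\circ\underline{e}_2=\underline{e}_2$. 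Composing with $p^{r-1}_r$ and using $\gamma_r=p^{r-1}_r\circ\alpha^r_r$ gives $p^{r-1}_r\circ\underline{e}_2=\gamma_r\circ\pi^r_r\circ\underline{e}_2$; now substituting $\underline{e}_2=\rho\circ(\im_e\times\Id_B)$ and the identities $\pi^r_r\circ\rho=q1\times\Id_B$, $p^{r-1}_r\circ\rho=\gamma_r\circ(q2\times\Id_B)$ of Definition~\ref{def:morphism-rho-and-F}, this becomes
\[
  \gamma_r\circ\bigl((q2\circ\im_e)\times\Id_B\bigr)=\gamma_r\circ\bigl((q1\circ\im_e)\times\Id_B\bigr).
\]
In other words, the $\s(k(e))$-points $q1\circ\im_e$ and $q2\circ\im_e$ of $\Cl_r$ induce the same section of $\pi^{r-1}_r$, and they lie over the same $\s(k(e))$-point of $\Cl_{r-1}$ since $p^{r-1}\circ q1=p^{r-1}\circ q2$. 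By the uniqueness in the universal property of the $(\Cl_{r-1})$-Ussf $(\Cl_r,\gamma_r,p^{r-1})$ of $\pi^{r-1}_r$ (Definition~\ref{def:W-ssf}, cf.\ Lemma~\ref{lem:clr2-as-universal-split}), two such points must coincide; therefore $q1\circ\im_e=q2\circ\im_e$, so $\im_e=\Delta_r\circ(q1\circ\im_e)$ factors through the diagonal and $e=F(b)\in\Delta_r(\Cl_r)$.

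I expect the main obstacle to be exactly this last paragraph: one must first upgrade the purely set-theoretic containment $\underline{e}_2(B^e)\subseteq\alpha^r_r(\Cl_r\times B)$ to the equality of morphisms $\underline{e}_2=\alpha^r_r\circ\pi^r_r\circ\underline{e}_2$ --- which is where reducedness of $B^e$ and the existence of a canonical retraction for a section come in --- and then extract the equality $q1\circ\im_e=q2\circ\im_e$ from it by the functorial rigidity of the universal split sections family. The remaining pieces, namely the pointwise statement and the easy implication, are a routine unwinding of Lemma~\ref{lem:diagrama commutatiu gamma}, Remark~\ref{rmk:rho-deltaR=alpha}, Definition~\ref{def:morphism-rho-and-F} and the definition of the associated section.
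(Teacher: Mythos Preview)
Your proof is correct and follows the same approach as the paper, namely deducing everything from the identity \(\underline{e}_2=\bl_{r+1}^r\circ\underline{b}\) of Lemma~\ref{lem:diagrama commutatiu gamma} together with Remark~\ref{rmk:rho-deltaR=alpha}. The paper's own proof is a two-line sketch citing exactly these two ingredients; your careful treatment of the ``only if'' direction of the first equivalence (via reducedness of \(B^e\) and the uniqueness in the universal property of the \((\Cl_{r-1})\)-Ussf) makes explicit what the paper leaves to the reader.
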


\begin{proof}
  By Lemma~\ref{lem:diagrama commutatiu gamma},
  \(\underline{e}_2=\bl_{r+1}^r\circ \underline{b}\).
  Then the proposition follows from Remark~\ref{rmk:rho-deltaR=alpha}.
\end{proof}



\begin{remark}\label{coro:observation-cl(r+1)-and-delta-r}
  Given \(b\in \Cl_{r+1}\),
  the section \(\underline{b}^{r+1}\)
  of the ordered \((r+1)\)-relative
  cluster \((\underline{b}^0,\dots,\underline{b}^{r+1})\)
  associated to \(b\)
  belongs to the first infinitesimal neighbourhood of section \(\underline{b}^r\),
  or \(\underline{b}^{r+1}\) is infinitely near to \(\underline{b}^r\), if and
  only if \(F(b)\in\Delta_r(\Cl_r)\).
\end{remark}

We denote \(V\) the open set \(\Cl_{r+1}\setminus F^{-1}(\Delta_r(\Cl_r))\) which
parameterize the ordered \((r+1)\)-relative clusters
\((\underline{c}^0,\dots,\underline{c}^{r+1})\) such that the section
\(\underline{c}^{r+1}\) is not contained in \(E\) (i.e. \(\underline{c}^{r+1}\) is
\emph{not} infinitely near to \(\underline{c}^r\)).

We denote \(I\) the closed set \(\rho^{-1}(\alpha^r_r(\Cl_r\times
B))=\rho^{-1}(\rho(\Delta_r(\Cl_r)\times B))\subseteq \Cl_r^2\times B\) whose
closed points are the closed \((c,d,t)\in \Cl^2_r\times B\) such that
\(\underline{c}^r(t)=\underline{d}^r(t)\).

We define the scheme \(J\) by the following Cartesian square,

\begin{equation}
  \begin{tikzcd}
    J \ar{d} \ar{r}  & I  \ar{d}  \\
    V \ar{r}{F|_V}  & \Cl_r^2             \\
  \end{tikzcd}\label{eq:diagram-J-I}
\end{equation}
where \(\apl{I }{\Cl_r^2 }\) is the restriction of the projection
\(\apl{\Cl_r^2\times B }{\Cl_r^2 }\) to \(I\).

\begin{remark}\label{remark: r+1-clusters -> adm pari of r-clusters}
  Let \(b\in V\) be a point with image \(F(b)=e\in \Cl_r^2\).
  Since \(\bl_{r+1}^r\circ \underline{b}=\underline{e}_2\) we have (via the blow up \(\bl_{r+1}^r\))
  \[
    \underline{b}(B^b)\setminus E \cong
    \underline{e}_2(B^e)\setminus \alpha_r^r \left(\Cl_r\times B \right).
  \] Hence the  strict transform of \(\underline{e}_2(B)\)
  by the blow up \(\bl^r_{r+1}\) is
  \[
    \overline{(\bl_{r+1}^r)^{-1} \left(\underline{e}_2(B^e)\setminus \alpha^r_r \left( \Cl_r\times B \right) \right)} = \overline{b(B^b)\setminus E}=b(B^b)
  \] where the last equality is because \(\underline{b}(B^b)\not\subseteq E\) since \(b\in V\).
\end{remark}

\begin{remark}  \label{rmk:iso-between-imatge-section-and-B}
  Given \(e\in \Cl_r^2\), the scheme
  \[
    \underline{e}_2(B^e)\cap \alpha^r_r(\Cl_r\times B)
  \]
  is isomorphic to the fibre \(I_e\)
  of \(\apl{I }{\Cl_r^2 }\).
\end{remark}

\begin{definition}
  Given \(e\in \Cl_r^2\),
  we call \(e\) 
  a pair of admissible \(r\)-relative
  clusters if
  \[
    \underline{e}_2(B^e)\cap \alpha^r_r(B)
  \]

  is an effective Cartier divisor of \(\underline{e}_2(B)\).
  If \(e\) is closed equal to \((c,d)\) we also call \(d\) admissible with
  respect to \(c\).
  
  We denote by \(\Cl^{\adm}_r\subseteq \Cl_r^2\)
  (resp. by \(\Cl^{\adm}_c\subseteq \Cl_r\))
  the set of all pairs of admissible \(r\)-relative
  clusters (resp. the set of all admissible \(r\)-relative
  clusters with respect to \(c\)).
\end{definition}

\begin{corollary}
  The set \(\Cl^{\adm}_r\) and for all closed \(c\in \Cl_r\),
  the set \(\Cl^{\adm}_c\) are constructible.
\end{corollary}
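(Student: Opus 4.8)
The plan is to translate membership in $\Cl_r^{\adm}$ into a fibrewise condition for a proper morphism and then deduce constructibility from generic flatness and upper semicontinuity. First, a reformulation: by Remark~\ref{rmk:iso-between-imatge-section-and-B} the scheme $\underline{e}_2(B^e)\cap\alpha^r_r(\Cl_r\times B)$ is the fibre $I_e$ of the projection $\apl{\mathrm{pr}\!:\!I}{\Cl_r^2}$ (the restriction to $I$ of $\Cl_r^2\times B\to\Cl_r^2$), while $\underline{e}_2$ is a closed embedding by Lemma~\ref{lem:imatge section is closed}, so it identifies $\underline{e}_2(B^e)$ with $B^e$. Since $B$ is regular and of finite type over the perfect field $K$ it is smooth over $K$; hence $B^e=B\times_K\s(k(e))$ is regular, and it is irreducible of dimension $\dim B$ because $B$ is irreducible and $K$ is algebraically closed. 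Therefore $e\in\Cl_r^{\adm}$ if and only if $I_e$ is an effective Cartier divisor of the regular integral scheme $B^e$. The statement for $\Cl_c^{\adm}$ reduces to this: for $c$ closed $q2$ maps $(\Cl_r^2)_c=q1^{-1}(c)$ isomorphically onto the closed subscheme $(p^{r-1})^{-1}(p^{r-1}(c))$ of $\Cl_r$, and carries $(\Cl_r^2)_c\cap\Cl_r^{\adm}$ onto $\Cl_c^{\adm}$. Finally $\Cl_r^2$ and $\Cl_r$ are countable disjoint unions of quasiprojective, hence Noetherian, schemes, so we may argue on one piece at a time and assume the ambient scheme Noetherian; note $\mathrm{pr}$ is proper because $B$ is projective.

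Next I would restrict to the open set $\Omega:=\{e\in\Cl_r^2:\dim I_e<\dim B\}$, which is open by upper semicontinuity of the fibre dimension of $\mathrm{pr}$ and contains $\Cl_r^{\adm}$ because an effective Cartier divisor is empty or of pure codimension one. For $e\in\Omega$ the ideal sheaf $\mathscr{J}_e$ of $I_e$ in $B^e$ is nonzero at the generic point of $B^e$, hence supported everywhere, and this turns the Cartier condition into a numerical one: because $B^e$ is regular (so every local ring is a domain), $I_e$ is an effective Cartier divisor of $B^e$ if and only if $\mathscr{J}_e$ needs at most one local generator everywhere, i.e.\ $\dim_{k(y)}\mathscr{J}_e\otimes_{\mathscr{O}_{B^e}}k(y)\le 1$ for all $y\in B^e$ --- a nonzero local generator being automatically a nonzerodivisor, and the non-admissible locus in $B^e$ being now the \emph{closed} set where two generators are needed.

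To make the sheaves $\mathscr{J}_e$ compatible with base change I would apply generic flatness and Noetherian induction to stratify $\Omega$ into finitely many locally closed subschemes $Z_j$ over which $\mathscr{O}_I$ restricts to a flat sheaf. Since $Z_j\times B\to Z_j$ is flat, the ideal sheaf $\mathscr{I}_j$ of $I\times_{\Cl_r^2}Z_j$ in $Z_j\times B$ is then flat over $Z_j$, for $e\in Z_j$ one gets $\mathscr{J}_e=\mathscr{I}_j\otimes_{\mathscr{O}_{Z_j}}k(e)$, and therefore $\dim_{k(y)}\mathscr{J}_e\otimes k(y)=\dim_{k(y)}\mathscr{I}_j\otimes_{\mathscr{O}_{Z_j\times B}}k(y)$ for $y$ over $e$; the right-hand side is an upper semicontinuous function of $y$ on $Z_j\times B$. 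Now constructibility of $\Cl_r^{\adm}$ follows by the standard criterion: it suffices to check, for every irreducible closed $Z$, that $\Cl_r^{\adm}\cap Z$ contains a nonempty open of $Z$ or is not dense, and this reduces to the case where $Z$ meets a single stratum $Z_j$, with generic point $\xi$. Let $N\subseteq Z\times B$ be the closed locus where the above function is $\ge 2$; its image $\mathrm{pr}(N)$ is closed in $Z$ since $Z\times B\to Z$ is proper. If $\xi\in\Cl_r^{\adm}$, then $N$ misses the fibre over $\xi$, so over the nonempty open set $Z\setminus\mathrm{pr}(N)$ every fibre of $\mathrm{pr}$ is an effective Cartier divisor; if $\xi\notin\Cl_r^{\adm}$, then $N$ meets the fibre over $\xi$, so $\mathrm{pr}(N)=Z$ and $\Cl_r^{\adm}\cap Z=\emptyset$. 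In both cases the criterion holds, so $\Cl_r^{\adm}$ is constructible, and by the first paragraph so is every $\Cl_c^{\adm}$.

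The step I expect to be most delicate is exactly this bookkeeping: identifying ``effective Cartier divisor'' on the regular fibres $B^e$ with the one-generator condition, and then --- by passing to $\Omega$ and flattening the base --- arranging that this condition is governed stratumwise by a single upper semicontinuous function on a proper total space, so that the non-admissible locus appears as the image of a closed set.
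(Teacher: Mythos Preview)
Your argument is correct, but it is a genuinely different route from the paper's. The paper observes, via the universal property of the blow up \(\bl^r_{r+1}\) (encoded in Remark~\ref{remark: r+1-clusters -> adm pari of r-clusters}), that \(\Cl^{\adm}_r=F(\Cl_{r+1})\setminus\Delta_r(\Cl_r)\) and \(\Cl^{\adm}_c=f^r(\Cl_{r+1})\setminus\{c\}\), and then simply invokes Chevalley's theorem. In other words, the already-constructed scheme \(\Cl_{r+1}\) together with the morphism \(F\) does all the work, and constructibility is a one-line consequence.

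Your approach instead avoids \(\Cl_{r+1}\) and \(F\) entirely: you analyse the projection \(I\to\Cl_r^2\) directly, translate ``\(I_e\) is an effective Cartier divisor of the regular scheme \(B^e\)'' into the numerical condition that the ideal sheaf needs at most one local generator, flatten \(I\) over a stratification of \(\Omega\) by generic flatness, and then use upper semicontinuity of fibre rank plus properness of \(B\) to run the standard constructibility criterion. This is longer but more self-contained --- it does not rely on the existence of \(\Cl_{r+1}\), on the blow-up universal property, or on the commutative square of Lemma~\ref{lem:diagrama commutatiu gamma} --- and it essentially anticipates the flattening stratification that the paper uses immediately afterwards in Theorem~\ref{theorem of birrationaliti}. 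One small point: in your final dichotomy you should be explicit that you are working on the locally closed piece \(Z\cap Z_j\) containing the generic point (so \(\mathrm{pr}(N)\) is closed in \(Z\cap Z_j\), not in \(Z\)); the conclusion for \(Z\) then follows because \(Z\cap Z_j\) contains a nonempty open of \(Z\).
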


\begin{proof}
  By Remark~\ref{remark: r+1-clusters -> adm pari of r-clusters},
  \(\Cl^{\adm}_r=F(\Cl_{r+1})\setminus\Delta_r(\Cl_r)\)
  and \(\Cl^{\adm}_c=f^r(\Cl_{r+1})\setminus \{c\}\) (the definition
  of \(f^r\) is in Notation~\ref{not:pr-and-fr}). The claim follows from
  Chevalley's theorem \citep[ex. 3.19]{hartshorne-algebraic-1977}. 
\end{proof}

\begin{proposition}\label{flat proj over V}
  \(J\subseteq V\times B\)
  is a relative effective Cartier divisor on \((V\times B)/V\)
\end{proposition}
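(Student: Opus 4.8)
The plan is to identify $J$ with the schematic pre-image of the exceptional divisor $E$ under $\gamma_{r+1}$ restricted to $V\times B$, which immediately exhibits $J$ as locally principal; the two remaining points --- that the local equation is a non-zero-divisor on $V\times B$, and that $J$ is flat over $V$ --- are then both obtained from the fibres of $\apl{V\times B}{V}$ by the local criterion of flatness, the crucial geometric input being that $F(V)$ lies inside the admissible locus. Concretely, I would first establish the identity
\[
  J\;=\;\bigl(\gamma_{r+1}|_{V\times B}\bigr)^{-1}(E)
\]
of closed subschemes of $V\times B$. By construction $E$ is the schematic pre-image under $\bl^r_{r+1}$ of its centre $\alpha^r_r(\Cl_r\times B)$, and it is an effective Cartier divisor on $\mathcal S^r_{r+1}$, hence locally cut out by a single non-zero-divisor. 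Using $\bl^r_{r+1}\circ\gamma_{r+1}=\rho\circ(F\times\Id_B)$ (Lemma~\ref{lem:diagrama commutatiu gamma}) and $I=\rho^{-1}\bigl(\alpha^r_r(\Cl_r\times B)\bigr)$, one gets
\[
  \gamma_{r+1}^{-1}(E)=\bigl(\bl^r_{r+1}\circ\gamma_{r+1}\bigr)^{-1}\!\bigl(\alpha^r_r(\Cl_r\times B)\bigr)=(F\times\Id_B)^{-1}(I),
\]
and restricting the source to the open subscheme $V\times B\subseteq\Cl_{r+1}\times B$ turns the right-hand side into $V\times_{\Cl_r^2}I$, which is exactly $J$ by the defining Cartesian square~\eqref{eq:diagram-J-I}. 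Hence $J$ is, locally on $V\times B$, of the form $V(f)$ for a single function $f$ (the $\gamma_{r+1}$-pullback of a local equation of $E$).

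Next I would inspect the fibres of $\apl{V\times B}{V}$, whose fibre over $v\in V$ is $B^{v}$ and in which $J$ restricts to the subscheme $J_v$ locally cut out by the image $\bar f$ of $f$. Put $e:=F(v)\in\Cl_r^2$. Since $v\notin F^{-1}(\Delta_r(\Cl_r))$ and $\Cl^{\adm}_r=F(\Cl_{r+1})\setminus\Delta_r(\Cl_r)$ (Remark~\ref{remark: r+1-clusters -> adm pari of r-clusters}), the point $e$ is admissible. By the definition of admissibility together with Remark~\ref{rmk:iso-between-imatge-section-and-B}, the fibre $I_e$ --- identified with a closed subscheme of $B^{e}$ through the closed embedding $\underline{e}_2$ and the isomorphism $\underline{e}_2(B^{e})\cong B^{e}$ --- is an effective Cartier divisor of $B^{e}$; base changing along the field extension $\apl{k(e)}{k(v)}$ (which is flat) shows that $J_v=I_e\otimes_{k(e)}k(v)$ is an effective Cartier divisor of $B^{v}$. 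Thus at every point $x$ of $J$ over $v$ the ideal of $J_v$ in $\mathcal O_{B^{v},x}$ equals $(\bar f)$ and admits a non-zero-divisor generator, so $\bar f$ itself is a non-zero-divisor in the Noetherian local ring $\mathcal O_{B^{v},x}$.

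Finally I would apply the local criterion of flatness (see e.g.\ \citep{vakil-rising-2015}). The projection $\apl{V\times B}{V}$ is flat and of finite presentation, being the base change of $\apl{B}{\s K}$ along $\apl{V}{\s K}$, and both $V\times B$ and $V$ are locally Noetherian because $\Cl_{r+1}$ is piecewise quasiprojective by Theorem~\ref{thr:existence of Cl_r}. Fixing a point $x$ of $J$ over $v\in V$ and writing $A:=\mathcal O_{V,v}$, $R:=\mathcal O_{V\times B,x}$, the map $\apl{A}{R}$ is a flat local homomorphism of Noetherian local rings and $f\in R$ reduces modulo $\mathfrak m_AR$ to the non-zero-divisor $\bar f\in R/\mathfrak m_AR=\mathcal O_{B^{v},x}$; therefore $f$ is a non-zero-divisor in $R$ and $R/fR$ is flat over $A$. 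This means that near $x$ the closed subscheme $J$ is an effective Cartier divisor in $V\times B$ and that $\apl{J}{V}$ is flat at $x$, and since away from $J$ one has $J=V(1)$ trivially, $J$ is a relative effective Cartier divisor on $(V\times B)/V$. The only step carrying genuine content is the fibrewise one: it is precisely the admissibility of every point of $F(V)$ --- equivalently, the description of $V$ as the locus where the last section is not infinitely near to the preceding one --- that upgrades $\bar f$ from merely nonzero to a non-zero-divisor; everything else is formal, modulo the routine bookkeeping identifying the fibre products around~\eqref{eq:diagram-J-I}.
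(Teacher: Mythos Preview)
Your proof is correct and follows the same overall architecture as the paper: identify $J$ with $(\gamma_{r+1}|_{V\times B})^{-1}(E)$, verify fibrewise that it is effective Cartier, and conclude by a flatness criterion. Two minor differences are worth noting. For the fibrewise step the paper argues directly rather than through admissibility: $J_b\cong\underline{b}(B^b)\cap E$, and since $E$ is Cartier in $\mathcal{S}^r_{r+1}$, the scheme $\underline{b}(B^b)\cong B^b$ is integral (this is where the regularity of $B$ is used), and $\underline{b}(B^b)\not\subseteq E$ by Proposition~\ref{prop:relation-sections-cl(r+1)-clr2}, the intersection is automatically effective Cartier. Your route via $\Cl^{\adm}_r=F(\Cl_{r+1})\setminus\Delta_r(\Cl_r)$ is valid but circuitous, since that equality (stated in the proof of the constructibility corollary, not in the Remark you cite) is itself established by the same strict-transform observation; the paper's direct argument makes clearer that the only geometric input is $b\notin F^{-1}(\Delta_r(\Cl_r))$ together with the integrality of $B^b$. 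For the final step the paper simply invokes the Stacks Project criterion (a closed subscheme of a flat finitely presented $X/S$ whose fibre at each point is Cartier is a relative effective Cartier divisor) instead of unpacking the local criterion of flatness; the content is the same.
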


\begin{proof}
  The following diagram, where the vertical arrows are the projections, is a
  Cartesian square.
  \[
    \begin{tikzcd}
      V\times B \rar{F|_{V}\times \Id_B} \dar[swap]{} & \Cl^2_r\times B \dar{} \\
      V \rar[swap]{F|_V} & \Cl^2_r  \\
    \end{tikzcd}
  \]
  Hence the scheme \(J=V\times_{\Cl^2_r} I\)
  is a closed subscheme of \(V\times B\),
  the morphism \(J\rightarrow V\)
  is the (flat) projection \(\apl{V\times B }{V }\)
  restricted to \(J\) and the following diagram is Cartesian.
  \[
    \begin{tikzcd}
      J \rar{} \dar[hook]{} & I \rar{} \dar[hook]{} & \alpha^r_r(\Cl_r\times B) \dar[hook]{}  \\
      V\times B \rar[swap]{F|_V\times \Id_B} & Cl^2_r\times B \rar[swap]{\rho} & \mathcal{S}^r_r  \\
    \end{tikzcd}
  \] So \(J = (\rho\circ (F|_V\times \Id_B))^{-1}(\alpha^r_r(\Cl_r\times B))\)
  and by Lemma~\ref{lem:diagrama commutatiu gamma} the scheme
  \(J\) is just \(\left(\gamma_{r+1}|_{V\times B}\right)^{-1}(E)\).\\
  
  For every \(b\in V\),
  the fibre \(J_{b}\)
  of \(\apl{J }{V }\)
  is isomorphic (via \(\gamma_{r+1}|_{B^{b}}=\underline{b}\))
  to \(\underline{b}(B^{b})\cap E\) which is
  an effective Cartier divisor of \(\underline{b}(B^{b})\) (note that
  \(\underline{b}(B^b)\cap E \not= \underline{b}(B^b)\) by
  Proposition~\ref{prop:relation-sections-cl(r+1)-clr2} and \(b\in V\)).

  Then \(J_{b}\)
  is an effective Cartier divisor in the fibre \((V\times B)_{b}\)
  of \(\apl{V\times B}{V}\)
  (note that \((V\times B)_{b}\cong B^{b}\)) and the proposition follows from
  lemma  \citep
  {stacks-project}.
\end{proof}


\begin{theorem}\label{theorem of birrationaliti}
Given the family \(\apl{\pi \!:\!  \mathcal{S}}{ B}\) with \(\mathcal{S}\)
quasiprojective and \(B\) projective and regular, there is a stratification,
\(\apl{\bigsqcup_p \Cl^p}{\Cl_r^2}\) by connected locally closed subschemes, such that 
  \begin{itemize}
  \item[(1)] There are two kinds of stratum.  Type I with all 
    \(e\in \Cl^p\)
    an admissible pair of \(r\)-relative
    clusters and type II with all \(e\in \Cl^p\)
    not admissible pair of \(r\)-relative clusters.
  \item[(2)] \(\Delta_r(\Cl_r)\) is a stratum itself of type II.
  \item[(3)] \(V\)
    is isomorphic to the all disjoint union of type I strata, which we call
    \(\bigsqcup_{\adm}\Cl^p\).
  \end{itemize}
\end{theorem}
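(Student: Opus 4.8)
The plan is to take as $\bigsqcup_p\Cl^p$ a connected refinement of the flattening stratification of the morphism $\apl{I}{\Cl_r^2}$ induced by the projection $\apl{\Cl_r^2\times B}{\Cl_r^2}$, further refined so that admissibility becomes a locally closed condition, and with $\Delta_r(\Cl_r)$ split off from the start. Since $I$ is a closed subscheme of $\Cl_r^2\times B$ and $B$ is projective, $\apl{I}{\Cl_r^2}$ is projective, so the flattening stratification theorem (see e.g. \citep{nitsure-construction-2005}) produces pairwise disjoint locally closed subschemes of $\Cl_r^2$, indexed by Hilbert polynomials and covering $\Cl_r^2$, with the property that a base change $\apl{T}{\Cl_r^2}$ pulls $I$ back to a flat family of a prescribed Hilbert polynomial exactly when $T$ factors through the corresponding stratum. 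By Remark~\ref{rmk:iso-between-imatge-section-and-B} the fibre of $\apl{I}{\Cl_r^2}$ over $e$ is $\underline e_2(B^e)\cap\alpha^r_r(\Cl_r\times B)$, which by Proposition~\ref{prop:relation-sections-cl(r+1)-clr2} is all of $\underline e_2(B^e)\cong B^e$ precisely when $e\in\Delta_r(\Cl_r)$; hence $\Delta_r(\Cl_r)$ is exactly the union of the flattening strata whose Hilbert polynomial is that of $B$. This settles (2): the diagonal is a union of strata, each of type II since $\underline e_2(B^e)$ is never an effective Cartier divisor of itself, and it is a single stratum as soon as $\Cl_r$ is connected.

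Next I would refine each flattening stratum $\Cl^q$ not contained in $\Delta_r(\Cl_r)$ by admissibility, using the flatness-and-Cartier criterion cited in the proof of Proposition~\ref{flat proj over V} (\citep{stacks-project}): a closed subscheme of the smooth morphism $(\Cl^q\times B)/\Cl^q$ which is flat over $\Cl^q$ is a relative effective Cartier divisor if and only if each of its fibres is an effective Cartier divisor. The key local claim is that $\{e\in\Cl^q : I_e\text{ is an effective Cartier divisor in }B^e\}$ is open: at a point $z$ of $I$ over $e$, principality of the ideal of $I_e$ in $\mathcal O_{B^e,z}$ amounts, by Nakayama and the regularity of $B$, to a fibrewise first-order condition which is upper semicontinuous along $I$, so the non-principal locus is closed in $I$ and its image in $\Cl^q$ is closed by properness of $\apl{I}{\Cl^q}$. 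Splitting $\Cl^q$ into this open admissible part and its closed complement, and then into connected components (each locally closed, $\Cl^q$ being Noetherian), I obtain the required stratification $\apl{\bigsqcup_p\Cl^p}{\Cl_r^2}$ by connected locally closed subschemes in which each stratum is entirely admissible (type I) or entirely non-admissible (type II); this is (1). Write $\bigsqcup_{\adm}\Cl^p$ for the union of the type I strata.

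For (3) I would produce mutually inverse morphisms between $V$ and $\bigsqcup_{\adm}\Cl^p$. The square~\eqref{eq:diagram-J-I} shows $J=(F|_V)^*I$, and by Proposition~\ref{flat proj over V} the morphism $\apl{J}{V}$ is a relative effective Cartier divisor, hence flat; therefore $F|_V$ factors through the flattening strata and, $J$ being fibrewise Cartier, through the admissible sub-strata, yielding a morphism $V\rightarrow\bigsqcup_{\adm}\Cl^p$ through which $F|_V$ factors. Conversely, on an admissible stratum $\Cl^p$ the restriction of $\rho$ embeds $\Cl^p\times B$ into $\mathcal S^r_r$ with scheme-theoretic intersection with the centre $\alpha^r_r(\Cl_r\times B)$ of $\bl_{r+1}^r$ equal to $I\times_{\Cl_r^2}\Cl^p$ (the relative version of Remark~\ref{rmk:iso-between-imatge-section-and-B}), a relative effective Cartier divisor over $\Cl^p$ and in particular Cartier on $\Cl^p\times B$; hence the strict transform of $\rho(\Cl^p\times B)$ under $\bl_{r+1}^r$ — formed fibrewise by Lemma~\ref{lem:flat-and-baseChange-blowup} and Corollary~\ref{coro:morphism-between-blow-ups} — is again isomorphic to $\Cl^p\times B$ and defines a section of $\pi^{\Cl^p}_{r+1}$ whose image is not contained in $E$ (cf. Remark~\ref{remark: r+1-clusters -> adm pari of r-clusters}). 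Together with the $r$-relative cluster on $\Cl^p$ pulled back along $\apl{q1}{\Cl_r}$, this section makes $\Cl^p$ into an $(r+1)$-rcf of $\pi$, so the universal property of $\Cl_{r+1}$ gives a canonical morphism $\apl{\Cl^p}{\Cl_{r+1}}$ landing in $V$, and these glue to a morphism $\bigsqcup_{\adm}\Cl^p\rightarrow V$. On closed points the two morphisms are the mutually inverse bijections ``admissible pair $\leftrightarrow$ its strict-transform $(r+1)$-cluster'', and the scheme-theoretic statement follows by comparing the $(r+1)$-rcf structures that the two round-trips pull back, using uniqueness in the universal properties of $\Cl_{r+1}$ and of the flattening stratification.

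The step I expect to be the main obstacle is making admissibility into a locally closed condition — the semicontinuity-plus-properness argument that ``$I_e$ is an effective Cartier divisor in $B^e$ for all $e$'' is open on each flattening stratum — hand in hand with checking that the admissible strata carry exactly the scheme structure for which $F|_V$ is an isomorphism, not merely a bijective morphism; both rest on the compatibility of the blow-ups $\bl_{r+1}^r$ with base change recorded in Lemma~\ref{lem:flat-and-baseChange-blowup} and Corollary~\ref{coro:morphism-between-blow-ups}.
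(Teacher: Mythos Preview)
Your plan is correct and follows the paper's strategy: take the flattening stratification of \(\apl{I}{\Cl_r^2}\), split into connected components, and for (3) build mutually inverse maps between \(V\) and \(\bigsqcup_{\adm}\Cl^p\) using Proposition~\ref{flat proj over V}, the universal property of the flattening stratification, the universal property of the blow up \(\bl^r_{r+1}\), and the universal property of \(\Cl_{r+1}\) (the paper phrases the last via the \(\Cl_r\)-Ussf \((\Cl_{r+1},\gamma_{r+1},p^r)\) rather than the \((r+1)\)-Urcf, but these are equivalent by the proof of Theorem~\ref{thr:existence of Cl_r}).

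The one substantive difference is in (1). You argue that admissibility is \emph{open} on each flattening stratum (via Nakayama and semicontinuity of the fibre rank of the conormal sheaf, plus properness of \(\apl{I}{\Cl^q}\)), and then you \emph{refine} the stratification by splitting off the admissible open locus. The paper does not refine: it cites \citep[VI, Th\'eor\`eme 2.1(i)]{grothendieck-fondements-1962} and \citep[3.4]{Kleiman-Picard-2005} to conclude that, because \(B\) is regular, the admissible locus on each flattening stratum is both open \emph{and closed}, hence constant on connected components. Thus the connected flattening strata are already of pure type and your extra refinement is always trivial. Your route works, but the paper's route is shorter and proves a sharper fact (and is where the regularity hypothesis on \(B\) is actually used). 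Your observation that \(\Delta_r(\Cl_r)\) is a single stratum only when \(\Cl_r\) is connected is a fair reading of a point the paper leaves slightly loose.
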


\begin{proof}
  The stratification \(\apl{\bigsqcup_p \Cl^p}{\Cl_r^2}\) of \(\Cl_r^2\) is the
  stratification by a locally finite set of connected locally closed subschemes
  given by the flattening stratification of the morphism \(\apl{I}{\Cl_r^2}\)
  (see \citep{nitsure-construction-2005} for the existence and the universal
  property of such flattening stratification) with the strata decomposed into
  connected components.\\

  (1) Given \(\Cl^p\)
  a stratum, by \citep[VI, Theorème 2.1 (i)]{grothendieck-fondements-1962} and
  \citep[3.4]{Kleiman-Picard-2005}, the set of \(e\in \Cl^p\) such that
  \(I_e\subseteq B^e\) is an effective Cartier divisor is both open and
  closed (here we use the hypothesis \(B\) regular). So, if there is \(e\in\Cl^p\)
  such that \(I_{e}\subseteq B^{e}\)
  is an effective Cartier divisor then so is \(I_{e'}\subseteq B^{e'}\)
  for all \(e'\in \Cl^p\) (note that \(\Cl^p\) is connected)
  and then Remark~\ref{rmk:iso-between-imatge-section-and-B} gives
  the statement.\\

  (2) For all \(e\in \Cl_r^2\),
  the dimension of the fibre \(I_{e}\)
  of the morphism \(\apl{I }{\Cl_r^2 }\)
  is equal to \(\dim(B)\)
  if and only if \(e\in \Delta_r(\Cl_r)\) and then \(I_e\cong B^e\).
  Therefore by the equidimensionality of the fibres of a flat
  morphism the closed set \(\Delta_r(\Cl_r)\) is a stratum (note from
  the definition of a family that \(B\) is irreducible).\\

  (3) By Proposition~\ref{flat proj over V} and the universal property of the
  flattening stratification there is a unique morphism
  \(\apl{F'\!:\! V}{\bigsqcup_p \Cl^p}\)
  which makes the following diagram commute.
  \[
    \begin{tikzcd}
      V \ar{rr}{F|_V} \ar{dr}[swap]{F'} &   & \Cl_{2}^r \\
      & \bigsqcup_p \Cl^p \ar[hook]{ur} & \\
    \end{tikzcd}
  \]
  Furthermore, by Remark~\ref{remark: r+1-clusters -> adm pari of r-clusters} and
  the universal property of \((\Cl_{r+1},\gamma_{r+1},p^r)\),
  \(F'(V) = \bigsqcup_{\adm}\Cl^p\).
  We call \(\apl{F_0\!:\! V}{\bigsqcup_{\adm}\Cl^p}\)
  the corestriction of \(F'\) which is surjective.
  
  We define \(\sqcup_{\adm} I^p\) by the following Cartesian square.
  \[
    \begin{tikzcd}
      \sqcup_{\adm}I^p \ar[hook]{r}  \ar{d} & I \ar{d} \\
      \sqcup_{\adm}\Cl^p \ar[hook]{r}  & \Cl_r^2 \\
    \end{tikzcd}
  \]
  The scheme \(\sqcup_{\adm}I^p\)
  is a subscheme of \(\sqcup_{\adm}\Cl^p\times B\).
  The morphism\(\apl{\sqcup_{\adm}I^p }{\sqcup_{\adm}\Cl^p }\)
  is the restriction of the (flat) projection
  \(\apl{\sqcup_{\adm}\Cl^p\times B }{\sqcup_{\adm}\Cl^p }\)
  to \(\sqcup_{\adm}I^p\).

  Given \(b\in V\) consider the fibre
  \(J_b\) of the (flat) morphism \(\apl{J }{V }\) (Proposition~\ref{flat proj over V}).
  By the Cartesian diagram~\eqref{eq:diagram-J-I} the scheme \(J_b\) is the pull
  back by \((\apl{\left(\sqcup_{\adm}I^p\right)_{F_0(b)}}{F_0(b)}\) of
  \(\apl{b}{F_0(b)}\). Hence \(J_b\) is a field extension of
  \(\left(\sqcup_{\adm}I^p\right)_{F_0(b)}\), in particular their Hilbert
  polynomial is
  equal~\citep
  {stacks-project}. 
  So, by~\citep[III.9.9]{hartshorne-algebraic-1977} and that \(F_0\) is
  exhaustive, the Hilbert polynomial of \((\sqcup_{\adm}I^p)_{e}\) 
  is locally constant on \(e\in \sqcup_{\adm}\Cl^p\)
  and again by \citep[III 9.9]{hartshorne-algebraic-1977} the morphism
  \(\apl{\sqcup_{\adm}I^p }{\sqcup_{\adm}\Cl^p }\) is flat. Furthermore, for all
  \(e\in \sqcup_{\adm}\Cl^p\),  \((\sqcup_{\adm}I^p)_{e}\subseteq B^{e}\)
  is an effective Cartier divisor, so by
  \citep
  {stacks-project},
  \(\sqcup_{\adm}I^p\subseteq \sqcup_{\adm}\Cl^p\times B\)
  is a relative effective Cartier divisor on
  \((\sqcup_{\adm}\Cl^p\times B)/\sqcup_{\adm}\Cl^p\).

  The scheme \(\sqcup_{\adm}I^p\)
  is the pre-image of the centre of blow up \(\bl^r_{r+1}\)
  by the morphism \(\rho\)
  restricted to \(\sqcup_{\adm}\Cl^p\times B\).
  By the universal properties of the blow up \(\bl_{r+1}^r\)
  and of the \(\Cl_r\)-Ussf
  \((\Cl_{r+1},\gamma_{r+1},p^r)\),
  there is a unique morphism \(\apl{G\!:\! \sqcup_{\adm}\Cl^p}{\Cl_{r+1}}\)
  which makes the following diagram commute.
  \[
    \begin{tikzcd}[column sep = 5em]
      \Cl_{r+1}\times B \ar{r}{\gamma_{r+1}} & \mathcal{S}^r_{r+1} \ar{d}{\bl_{r+1}^r} \\
      \sqcup_{\adm}\Cl^p \times B \ar{r}{\rho|_{\sqcup_{\adm}\Cl^p\times \Id_B}}
      \ar{u}{G\times \Id_B} &
      \mathcal{S}_r^r \\
    \end{tikzcd}
  \]

  By Lemma~\ref{lem:diagrama commutatiu gamma} and the universal property
  of the \(\Cl_r\)-Ussf
  \(\left( \Cl_r^2,\rho,q1 \right)\)
  clearly \(F\circ G=\Id_{\sqcup_{\adm}\Cl^p}\).
  We have \(G(\sqcup_{\adm}\Cl^p)\subseteq F^{-1}(\sqcup_{\adm}\Cl^p)\)
  and since \(\Delta_r(\Cl_r)\)
  is not a Type I stratum \( F^{-1}(\sqcup_{\adm}\Cl^p)\subseteq V\).
  We call \(\apl{G_0\!:\! \sqcup_{\adm}\Cl^p}{V}\)
  the corestriction of \(G\) to \(V\).

  Finally, by the universal properties of the \(\Cl_r\)-Ussf
  \((\Cl_{r+1},\gamma_{r+1},p^{r+1})\)
  and of the blow up \(\bl^r_{r+1}\)
  and by above commutative diagram and Lemma~\ref{lem:diagrama commutatiu
    gamma} the equality \(G_0\circ F_0=\Id_V\) holds.
\end{proof}

\begin{remark}\label{rmk:eq-union-typeI-stratum-Cladm}
  \(\Cl^{\adm}_r(K)=\bigsqcup_{\adm}\Cl^p(K)\).
\end{remark}

The following theorem is an immediate consequence of the previous
Theorem~\ref{theorem of birrationaliti}.
\begin{theorem}\label{the end theorem}
  Each irreducible component \(Z\) of \(\Cl_{r+1}\) is either
  \begin{enumerate}
  \item composed entirely of clusters whose \((r+1)\)'th
    section is infinitely near to the \(r\)'th (and \(F(Z)\subset \Delta_r(\Cl_r)\)),

  \item isomorphic to a type I stratum whose closure does not intersect
    \(\Delta_r(\Cl_r)\),

  \item birational to a irreducible component of a type I stratum whose closure
    intersects \(\Delta_r(\Cl_r)\), in this case \(F|_Z\)
    is a blowup map whose centre fails to be Cartier only on \(\Delta_r(\Cl_r)\).
  \end{enumerate}
\end{theorem}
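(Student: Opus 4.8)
The plan is to read the statement off the open–closed decomposition $\Cl_{r+1}=V\sqcup F^{-1}(\Delta_r(\Cl_r))$ together with Theorem~\ref{theorem of birrationaliti}, and, in the one remaining case, to recognise the morphism $F$ of Definition~\ref{def:morphism-rho-and-F} as a blow up through the universal property of blow ups. It is convenient first to refine the stratification $\bigsqcup_p\Cl^p\to\Cl_r^2$ of Theorem~\ref{theorem of birrationaliti} by decomposing each stratum into its irreducible components (which are still connected, being irreducible); Theorem~\ref{theorem of birrationaliti} continues to hold for this refinement, and now "type~I stratum'' and "irreducible component of a type~I stratum'' coincide. Fix an irreducible component $Z$ of $\Cl_{r+1}$.

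If $Z\subseteq F^{-1}(\Delta_r(\Cl_r))$ then $F(Z)\subseteq\Delta_r(\Cl_r)$, and by Remark~\ref{coro:observation-cl(r+1)-and-delta-r} every closed point of $Z$ corresponds to an $(r+1)$-relative cluster whose $(r+1)$'th section is infinitely near to its $r$'th; this is case~(1). Assume now $Z\not\subseteq F^{-1}(\Delta_r(\Cl_r))$. As $V$ is open, $Z\cap V$ is dense and irreducible in $Z$ and $Z=\overline{Z\cap V}$ (closure in $\Cl_{r+1}$). Under the isomorphism $V\cong\bigsqcup_{\adm}\Cl^p$ of Theorem~\ref{theorem of birrationaliti}(3), realised by the corestriction $F_0$ of $F$ with inverse $G_0$, the connected set $Z\cap V$ lies in a single type~I stratum $\Cl^{p_0}$, and being closed in $V$ it is closed in $\Cl^{p_0}$; a standard argument then identifies $Z\cap V$ with an irreducible component $W$ of $\Cl^{p_0}$ (an irreducible closed-in-$V$ set $W'\supseteq Z\cap V$ has $\overline{W'}^{\,\Cl_{r+1}}\supseteq Z$, hence $\overline{W'}=Z$ and $W'=\overline{W'}\cap V=Z\cap V$). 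Since $F|_V$ is an isomorphism onto its image, $F|_Z$ restricts to an isomorphism $Z\cap V\xrightarrow{\ \sim\ }W$, so $Z$ is birational, via $F|_Z$, to the type~I stratum $W$.

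Now I would split cases (2) and (3) according to whether $\overline W$ (closure in $\Cl_r^2$) meets $\Delta_r(\Cl_r)$. If $\overline W\cap\Delta_r(\Cl_r)=\emptyset$, then $F(Z)\subseteq\overline{F(Z\cap V)}=\overline W$ avoids $\Delta_r(\Cl_r)$, so $Z=Z\cap F^{-1}(F(Z))\subseteq F^{-1}(\Cl_r^2\setminus\Delta_r(\Cl_r))=V$, whence $Z=Z\cap V=W$: case~(2). Suppose instead $\overline W\cap\Delta_r(\Cl_r)\neq\emptyset$. For $e\in\Cl_r^2\setminus\Delta_r(\Cl_r)$ the fibre $F^{-1}(e)$ lies in $V$ and, under $V\cong\bigsqcup_{\adm}\Cl^p$, is empty when $e$ is not admissible and otherwise the single point $G_0(e)$, which lies in $Z$ exactly when $e\in W$; therefore $F(Z)\cap(\Cl_r^2\setminus\Delta_r(\Cl_r))=W$. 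Granting that $F|_Z$ is proper — equivalently that $F(Z)$ is closed, hence $F(Z)=\overline W$ — it follows that $\overline W\setminus W\subseteq\Delta_r(\Cl_r)$ and that $F|_Z\colon Z\to\overline W$ is an isomorphism over $\overline W\setminus\Delta_r(\Cl_r)$. Then $F|_Z$ is a projective birational morphism between quasiprojective integral schemes, an isomorphism outside $\Delta_r(\Cl_r)$; by the characterisation of blow ups in \citep[II.7.17]{hartshorne-algebraic-1977} it is the blow up of $\overline W$ along a coherent ideal sheaf, which is invertible on $\overline W\setminus\Delta_r(\Cl_r)$, i.e. whose centre fails to be Cartier only along $\Delta_r(\Cl_r)$; the non-uniqueness of that ideal (in contrast to that of the blow up map) reflects that $\overline W$ is typically singular along $\Delta_r(\Cl_r)$. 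This is case~(3).

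The main obstacle is exactly the properness of $F|_Z$ onto $\overline W$ (equivalently the closedness of $F(Z)$): this is what promotes the birational map of the second paragraph to a genuine blow up. I would prove it with the valuative criterion, using the explicit construction of $\mathcal{S}^r_{r+1}$ from the blow up $\bl^r_{r+1}$: for a discrete valuation ring $R$ and a morphism $\s(R)\to\overline W$ whose generic point lands in $W$, the generic $(r+1)$-relative cluster over the fraction field of $R$ extends, and when the special point falls on $\Delta_r(\Cl_r)$ its last section is forced into the first infinitesimal neighbourhood — a section which exists and is unique precisely because the strict transform picture of Remark~\ref{remark: r+1-clusters -> adm pari of r-clusters} applies, and the limit does not escape since $B$ is projective (the very hypothesis guaranteeing the existence of $\Cl_{r+1}$). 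Everything else is bookkeeping with $\Cl_{r+1}=V\sqcup F^{-1}(\Delta_r(\Cl_r))$ and Theorem~\ref{theorem of birrationaliti}.
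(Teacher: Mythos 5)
Your reduction via the decomposition \(\Cl_{r+1}=V\sqcup F^{-1}(\Delta_r(\Cl_r))\) is sound: case (1), case (2), and the birationality assertion of case (3) all follow correctly from Theorem~\ref{theorem of birrationaliti} together with Remark~\ref{coro:observation-cl(r+1)-and-delta-r}, and your identification of \(Z\cap V\) with an irreducible component \(W\) of a type~I stratum is fine. (For what it is worth, the paper offers no argument at all here --- it declares the theorem an ``immediate consequence'' of Theorem~\ref{theorem of birrationaliti} --- so up to this point you supply more detail than the source.) The problem is the step you yourself flag and then only sketch: everything that makes case (3) more than a restatement of Theorem~\ref{theorem of birrationaliti}(3), namely that \(F|_Z\) is a \emph{blow up} map --- hence proper and surjective onto \(\overline W\), with centre invertible off \(\Delta_r(\Cl_r)\) --- rests entirely on the unproved closedness of \(F(Z)\). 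Your appeal to \citep[II.7.17]{hartshorne-algebraic-1977} presupposes that \(F|_Z\) is a projective (in particular proper) morphism, so the properness cannot be deferred: without it your argument yields only that \(Z\) contains a dense open isomorphic to \(W\), i.e.\ nothing beyond Theorem~\ref{theorem of birrationaliti}. Note that the introduction of the paper is in fact more cautious than the theorem and claims only that the component is ``(an open subset of) a blowup'', which is exactly the statement one has before properness is established.

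The valuative-criterion sketch does not close this gap. First, properness requires lifting squares over \emph{every} DVR mapping to \(\overline W\), not only those whose generic point lands in \(W\). Second, and more seriously, the existence of the limit is precisely the issue: for a one-parameter family in \(Z\cap V\) specialising over a point \(e_0\in\overline W\cap\Delta_r(\Cl_r)\), the last sections are strict transforms of \(\underline{e}_2(B^e)\), and as \(e\to e_0\) their images degenerate into the exceptional divisor \(E\) over the centre; one must prove that the flat limit of these images is again the image of a \emph{section} of the relevant \(\pi_{r+1}\), rather than a subscheme with vertical or embedded components (in which case no point of \(\Cl_{r+1}\) lies over \(e_0\) and \(F(Z)\) is not closed). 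Remark~\ref{remark: r+1-clusters -> adm pari of r-clusters} describes points already in \(V\) and says nothing about such limits, and the projectivity of \(B\) guarantees the existence of \(\Cl_{r+1}\) as a scheme, not the sectionality of this limit: the Usf of a family is in general not proper over the base, exactly because sections can degenerate to non-sections. To repair the argument you should either establish properness directly --- for instance by exhibiting a family of \((r+1)\)-relative clusters over a suitable blow up of \(\overline W\) (as is done by hand in the last example of Section~\ref{examples}) and using the universal property of \(\Cl_{r+1}\) to map that proper blow up onto \(Z\) --- or weaken the conclusion of case (3) to ``an open subscheme of a blow up'', in line with the paper's introduction.
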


\section{Examples}\label{examples}
In this section we collect a few simple examples on surfaces in which the
Universal family of relative clusters behave differently to Kleiman's iterated
blow ups in distinct ways.

We consider families \(\apl{\pi \!:\!  \mathcal{S} }{B }\)
with \(B\)
 and \(\mathcal{S}\)
projective; by Theorem~\ref{thr:existence of Cl_r} the \(r\)-Urcf
\((\Cl_r,\alpha^r_{\bullet})\)
of \(\pi\)
exists for all \(r\ge 0\).
For all the examples we fix the notation of the Section~\ref{family blowups} for
\(r=0\).
Finally, if \((X,\psi)\)
is the Usf of \(\pi\),
by Remark~\ref{equivalence sf - rcf} the scheme \(\Cl_0\)
is isomorphic to \(X\).
We refer to the elements of \(\Cl_0\) as sections and \(\Cl_1\) as clusters.


\begin{example}
  We will show a family in which there are infinitely many components of
  \(\Cl_1\) filled up with infinitely near sections which are not limits of
  distinct sections, a phenomena which does not occur on an absolute
  setting \(\apl{\mathcal{S} }{ \s(K)}\).\\

  Consider a smooth family \(\apl{\pi\!:\! \mathcal{S}}{C}\) of relative
  dimension \(2\)
  with \(C\)
  a smooth curve. There are no restrictions to the pairs of admissible sections
  because the dimension of the base is 1. Given the stratification
  \(\apl{\bigsqcup_p \Cl^p}{\Cl_0^2}\)
  of the Theorem~\ref{theorem of birrationaliti}, \(\Delta_0(\Cl_0)\)
  is the unique type II stratum and, by
  Remark~\ref{rmk:eq-union-typeI-stratum-Cladm},
  \[\left(\bigsqcup_{p}\Cl^p\setminus \Delta_0(\Cl_0)\right)\cong V\subseteq \Cl_1.\]


  There are finitely or countably many irreducible components \(C_d\)
  of \(\Cl_0\)
  with \(d\in\N\).
  For any pair of integers \(d,d'\ge 0\)
  there is a positive integer which bounds the degree of the 0-cycle intersection
  of \(\underline{\sigma}(B)\)
  and \(\underline{\sigma'}(B)\)
  for any pair of sections
  \((\sigma,\sigma')\in \left(C_d\times C_{d'}\right)(K)\).
  Given an integer \(i\ge 0\),
  we call \(D_i\)
  the locally closed subscheme of \(\Cl_0^2\)
  consisting of the pairs of sections which intersect at a 0-cycle of degree
  \(i\). The flattening stratification of \(\apl{I}{\Cl_0^2}\) is:
  \[
    \Cl_0^2 = \bigsqcup\limits_{i\ge 0} D_i\sqcup \Delta_0(\Cl_0),
  \]

  For each \(i\ge 0\)
  and each pair of positive integers \(d,d'\ge 0\)
  \(D_{i,d,d'}:=D_i\cap (C_d\times C_{d'})\)
  is either empty or an irreducible component of the stratum \(D_i\).
  We call \(\tilde{D}_{i,d,d'}\) the component
  of \(V\) isomorphic to \(D_{i,d,d'}\).\\




  Now assume for simplicity \(C=\PP^1\).
  For each \(\sigma\in \Cl_0(K)\)
  the exceptional divisor of the fibre \((\mathcal{S}_1^0)_\sigma\)
  is a rational surface isomorphic to
  \(\PP(\mathcal{N}_{\underline{\sigma}(\PP^1)/\mathcal{S}})\cong\PP(\mathcal{O}_{\PP^1}(a)\oplus
  \mathcal{O}_{\PP^1}(b))\),
  for some \(a,b\)
  in \(\Z\), which is the Hirzebruch surface, \(F_{|b-a|}\). If \(e:=b-a\ge0\),
  \[\Pic(E) = \Z[C] + \Z[F]\]
  with \(C^2=-e\),
  \(F^2=0\)
  and \(CF = 1\).
  So, an irreducible curve linearly equivalent to \(D = nC+mF\)
  intersects each fibre \(\left((\mathcal{S}_1^0)_\sigma\right)_t\)
  at exactly one point, for each \(t\in\PP^1\),
  if and only if \(1=D\cdot F=n\).
  The curve \(D\)
  is the image of a section if and only if it is irreducible and \(D=C+mF\)
  with \(m = 0\)
  or \(m\ge e\).
  The Usf \(X_E\)
  of \(\apl{E}{\PP^1}\)
  has a component \((X_E)_m\)
  for each \(m\ge e\).
Since \(\Cl_0^2\) is locally Noetherian there are a finite number of \(\tilde{D}_{i,d,d'}\)
  such that \((\sigma,\sigma)\in F(\tilde{D}_{i,d,d'})=D_{i,d,d'}\).
  So, there are infinitely many \((X_E)_m\)
  with \(F((X_E)_m)\subseteq \Delta_0(\Cl_0)\)
  filled up with sections of \(E\)
  that are not a limit of sections of some \(D_{i,d,d'}\).

  In each irreducible component of \(\Cl_0\)
  there is a locally closed subscheme \(\mathcal{N}(a,b)\),
  for each possible pair of integers \((a,b)\),
  formed by the sections \(\sigma\)
  with
  \(\mathcal{N}_{\underline{\sigma}(\PP^1)/\mathcal{S}}\cong
  \mathcal{O}_{\PP^1}(a)\oplus \mathcal{O}_{\PP^1}(b)\).
  Over \(\mathcal{N}(a,b)\),
  for each \(m>>0\),
  there is a family of rational quasiprojective varieties parametrizing all
  sections infinitely near to sections in \(\mathcal{N}(a,b)\)
  and of class \(C+mF\).
  These families form irreducible components of \(\Cl_1\).
\end{example}

\begin{example}
  We will see that \(\Cl_1\) can be smaller than \(\Cl_0\).  The phenomenon is
  due to the admissibility restriction on pairs, which does not exist on an
  absolute
  setting or when the base is a curve.\\

  Consider \(\apl{\pi\!:\! \PP^2\times \PP^2}{\PP^2}\) the projection over the
  second factor.  The scheme \(\Cl_0\) is an union of irreducible connected
  components \(C_d\) each one isomorphic to an open subscheme of
  \(\PP(K[u,v,w]^3_d)\), for \(d\in\N\) the degree of the sections.

  The following easy lemma is left to the reader.
  \begin{lemma}
    Given \(c=[P_1:P_2:P_3]\) and \(d=[Q_1:Q_2:Q_3]\) two morphisms of
    \(\apl{\PP^2}{\PP^2}\), with \(c\) non constant and
    \(Z:=\underline{c}(\PP^2)\cap\underline{d}(\PP^2)\) not equal to
    \(\underline{c}(\PP^2)\).  Then, \(Z\) is not a Cartier divisor in
    \(\underline{c}(\PP^2)\).
  \end{lemma}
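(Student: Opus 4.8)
The plan is to work on the surface $\underline c(\PP^2)$, which via $\pi$ is isomorphic to $\PP^2$ (the section $\underline c$ is a closed embedding by Lemma~\ref{lem:imatge section is closed}, with $\pi$ restricting to an inverse). Writing $c=[P_1:P_2:P_3]$ and $d=[Q_1:Q_2:Q_3]$, with the $P_i$ homogeneous of a common degree $a$ and without common zero (so $c$ is a morphism, and $a\ge 1$ since $c$ is non-constant) and likewise the $Q_i$ of degree $b$, the incidence equations $x_iQ_j-x_jQ_i$ cutting out $\underline d(\PP^2)\subseteq\PP^2\times\PP^2$ pull back along $\underline c$ (i.e.\ substituting $x_i\mapsto P_i$) to the $2\times 2$ minors
\[m_1=P_1Q_2-P_2Q_1,\quad m_2=P_1Q_3-P_3Q_1,\quad m_3=P_2Q_3-P_3Q_2\]
of $\left(\begin{smallmatrix}P_1&P_2&P_3\\Q_1&Q_2&Q_3\end{smallmatrix}\right)$; so $Z$ is identified with the subscheme $V(m_1,m_2,m_3)$ of $\PP^2$, and $Z\neq\underline c(\PP^2)$ forces $c\neq d$, i.e.\ the $m_i$ are not all $0$. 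Setting $G:=\gcd(m_1,m_2,m_3)$ and $n_i:=m_i/G$ (so $\gcd(n_1,n_2,n_3)=1$), on each local ring $\mathcal O_{\PP^2,p}$ — a two-dimensional UFD — the ideal of $Z$ is $G\cdot(n_1,n_2,n_3)$, which is principal iff $(n_1,n_2,n_3)$ is, i.e.\ iff $p\notin V(n_1,n_2,n_3)$. Hence $Z$ is an effective Cartier divisor precisely when $V(n_1,n_2,n_3)=\emptyset$, and the whole lemma reduces to showing this set is \emph{non}empty.

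To prove $V(n_1,n_2,n_3)\neq\emptyset$ I would count with Chern classes. The triple $N:=(n_3,-n_2,n_1)$ consists of forms of a common degree $e$ with $\gcd=1$, and the Plücker identity $(m_3,-m_2,m_1)=(P_1,P_2,P_3)\times(Q_1,Q_2,Q_3)$ gives $P_1m_3-P_2m_2+P_3m_1=0$, hence $P_1n_3-P_2n_2+P_3n_1=0$; thus $N$ is a global section of $\mathcal K(e):=\mathcal K\otimes\mathcal O_{\PP^2}(e)$, where $\mathcal K:=\ker\bigl(\mathcal O_{\PP^2}^{\oplus 3}\xrightarrow{(P_1,P_2,P_3)}\mathcal O_{\PP^2}(a)\bigr)$ is locally free of rank $2$ (the map is surjective because $c$ has no base point). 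Since $\gcd(n_i)=1$, the section $N$ vanishes in codimension $2$, so the zero scheme $V(n_1,n_2,n_3)$ of this regular section has length $c_2(\mathcal K(e))$; and from $c(\mathcal K)=(1+aH)^{-1}=1-aH+a^2H^2$ (with $H$ the hyperplane class) one computes
\[c_2(\mathcal K(e))=a^2-ae+e^2=\tfrac12\bigl(a^2+e^2+(a-e)^2\bigr)\ \ge\ \tfrac12\,a^2\ >\ 0 .\]
So $V(n_1,n_2,n_3)\neq\emptyset$, as needed; at any $p$ in it one moreover sees directly that $G\cdot(n_1,n_2,n_3)$ is non-principal in $\mathcal O_{\PP^2,p}$, the ideal $(n_1,n_2,n_3)$ there having height $2$ — concretely, $Z$ carries an embedded fat point at $p$.

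The routine points I would leave implicit are the identification of $Z$ with $V(m_1,m_2,m_3)$ (the incidence ideal of a graph is generated by those minors, using the absence of base points), the fact that the zero scheme of a regular section of a rank-$2$ bundle on a smooth surface represents $c_2$, and the UFD fact that $G\cdot I$ is principal iff $I$ is. The main obstacle — and the real content — is the case where $\gcd(m_1,m_2,m_3)$ has positive degree: then $Z$ contains the curve $\{G=0\}$ and might a priori be a genuine divisor; what prevents this is exactly that dividing out $G$ always leaves a residual ideal $(n_1,n_2,n_3)$ with \emph{non}empty (necessarily zero-dimensional) vanishing locus, and that is precisely what the inequality $c_2(\mathcal K(e))=a^2-ae+e^2>0$ — valid for every $e$ because the discriminant $-3a^2$ is negative — delivers. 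Specialised to $G$ constant, the same inequality also disposes of the degenerate possibility $Z=\emptyset$, for which the statement would actually be false, the empty subscheme being an effective Cartier divisor.
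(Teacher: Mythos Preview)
Your argument is correct. The paper does not actually prove this lemma---it introduces it with ``The following easy lemma is left to the reader''---so there is nothing to compare against directly. Your reduction to showing $V(n_1,n_2,n_3)\neq\emptyset$ via the Chern class $c_2(\mathcal K(e))=a^2-ae+e^2>0$ is complete: the identification of $Z$ with $V(m_1,m_2,m_3)$, the survival of the Pl\"ucker syzygy $P\cdot(P\times Q)=0$ after dividing out $G$, the fact that the zero scheme of $N$ as a section of the subbundle $\mathcal K(e)\subset\mathcal O(e)^{\oplus 3}$ is exactly $V(n_1,n_2,n_3)$, and the positivity of $c_2$ forcing a zero are all in order. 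The word ``easy'' suggests the author may have had something more elementary in mind (perhaps a direct determinantal or degree count), but none is supplied, and your Chern-class route is a clean and self-contained way to certify that the coprime triple $(n_1,n_2,n_3)$ always has a common zero on $\PP^2$.
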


\begin{corollary}
  The only pairs of admissible sections of \(\pi\) are the constant ones.
\end{corollary}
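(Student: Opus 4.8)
The plan is to recognise the admissibility condition for a pair $(c,d)$ as precisely the Cartier condition of the preceding Lemma, once one accounts for the fact that the Lemma is phrased with the graph of the first morphism while admissibility, by definition, involves the graph of the second. Concretely, recall that a section of $\pi$ is the same thing as a morphism $\PP^2\to\PP^2$: to such a morphism $c$ corresponds the section $\underline{c}\colon t\mapsto(c(t),t)$, with image the graph $\underline{c}(\PP^2)\subseteq\PP^2\times\PP^2$, carried isomorphically onto $\PP^2$ by the second projection $\mathrm{pr}_2$. Fix a closed point $e=(c,d)$ of $\Cl_0^2=\Cl_0\times\Cl_0$, and, following the Lemma, write $c,d$ also for the two morphisms. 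Here $\mathcal{S}^0_0=\Cl_0\times\mathcal{S}$; by Remark~\ref{equivalence sf - rcf} the section $\alpha^0_0$ is $\Id_{\Cl_0}\times_{\Cl_0}\psi$, and since $\psi$ restricts on the fibre $\{c\}\times B$ to $\underline{c}$, the image of $\alpha^0_0$ meets $\{c\}\times\mathcal{S}$ in $\{c\}\times\underline{c}(\PP^2)$; on the other hand $\underline{e}_2(B^e)=\{c\}\times\underline{d}(\PP^2)$, by the description of $\rho$ recalled after Definition~\ref{def:morphism-rho-and-F}. Therefore, with $Z:=\underline{c}(\PP^2)\cap\underline{d}(\PP^2)$ as in the Lemma,
\[
\underline{e}_2(B^e)\cap\alpha^0_0(\Cl_0\times B)=\{c\}\times Z
\]
scheme-theoretically, and $d$ is admissible with respect to $c$ exactly when $Z$ is an effective Cartier divisor of $\underline{d}(\PP^2)$.

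Next I would record the one auxiliary point: whether $Z$ is an effective Cartier divisor is insensitive to which of the two graphs it is viewed in. Indeed $\mathrm{pr}_2$ identifies both $Z\hookrightarrow\underline{c}(\PP^2)$ and $Z\hookrightarrow\underline{d}(\PP^2)$ with the same closed subscheme of $\PP^2$, cut out in a suitable affine chart by the ``cross product'' relations $c(t)_ic(t)_0^{-1}=d(t)_id(t)_0^{-1}$, which are symmetric in $c$ and $d$. Hence $Z$ is an effective Cartier divisor of $\underline{c}(\PP^2)$ if and only if it is one of $\underline{d}(\PP^2)$; in particular admissibility of $(c,d)$ is symmetric in its two entries.

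I would then finish with a short case distinction. If both $c$ and $d$ are constant there is nothing to prove (one checks that $Z$ is then either all of $\underline{c}(\PP^2)$, when $c=d$, or empty, when $c\neq d$; in the latter case the pair is admissible). So suppose at least one of $c,d$ is non-constant; the claim is that $(c,d)$ is then not admissible. If $c=d$, then $Z=\underline{d}(\PP^2)$ is all of $\underline{e}_2(B^e)$, which, being non-empty, is not an effective Cartier divisor of itself --- equivalently, $e$ lies on the diagonal $\Delta_0(\Cl_0)$, which is a stratum of type~II by Theorem~\ref{theorem of birrationaliti}. If $c\neq d$, then by the symmetry noted above I may assume $c$ is non-constant (otherwise $d$ is, and I interchange them); moreover $Z\neq\underline{c}(\PP^2)$, for $Z=\underline{c}(\PP^2)$ would force $\underline{c}(\PP^2)\subseteq\underline{d}(\PP^2)$ and hence, both being sections of $\mathrm{pr}_2$, $\underline{c}(\PP^2)=\underline{d}(\PP^2)$, i.e. $c=d$. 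The Lemma then applies and shows that $Z$ is not a Cartier divisor of $\underline{c}(\PP^2)$, hence, by the previous paragraph, not of $\underline{d}(\PP^2)$; so $(c,d)$ is not admissible. This exhausts the cases.

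The substance of the argument lies entirely in the Lemma; the only real work here is the bookkeeping identification of $\underline{e}_2(B^e)\cap\alpha^0_0(\Cl_0\times B)$ with $\{c\}\times Z$ and the order-independence of the Cartier property. I expect pinning down the correct scheme structure on this intersection to be the one point that deserves a little care, though it is purely formal.
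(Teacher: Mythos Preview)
Your argument is correct and is essentially the deduction the paper leaves implicit: the Corollary is stated without proof, as an immediate consequence of the preceding Lemma. Your identification \(\underline{e}_2(B^e)\cap\alpha^0_0(\Cl_0\times B)=\{c\}\times Z\) is right, and the case analysis is clean.

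The one point where you do more than the paper is the symmetry step. The Lemma concludes that \(Z\) is not Cartier in \(\underline{c}(\PP^2)\), whereas admissibility of \((c,d)\) asks for \(Z\) to be Cartier in \(\underline{d}(\PP^2)\); you bridge this by observing that \(\mathrm{pr}_2\) carries both inclusions \(Z\hookrightarrow\underline{c}(\PP^2)\) and \(Z\hookrightarrow\underline{d}(\PP^2)\) to the same closed subscheme \(\{t:c(t)=d(t)\}\subseteq\PP^2\), so the Cartier property is independent of which graph one sits in. The paper does not spell this out, so your treatment is actually more complete on this point. (A minor remark: invoking Theorem~\ref{theorem of birrationaliti} for the diagonal case is heavier than needed---it suffices to note that \(Z=\underline{d}(\PP^2)\) is never an effective Cartier divisor of itself.)
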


If \(e\in \Cl_1\) and \(F(e) = (c,d)\in \Cl_0^2\) with \(c\) a constant section,
\(\Bl_{\underline{c}(\PP^2)}(\PP^2\times \PP^2)\cong \Bl_q(\PP^2)\times \PP^2\),
for some \(q\in\PP^2\), and the exceptional divisor is isomorphic to
\(\apl{\PP^1\times \PP^2}{\PP^2}\), which admits constant sections only. The
cluster \(e\) must be constant, even it is infinitely near to \(c\) (e.i. if
\(c=d\)).  It is also possible to see that there are no other pairs of infinitely
near sections, so
\[\Cl_1=\Bl_{\Delta_{\PP^2}} (\PP^2\times \PP^2).\]
\end{example}

\begin{example}
  Let us now illustrate, with an explicit computation in a particular case of
  example I, that the centre ideal of a blow up (\(F|_{Z}\) in the notation of
  Theorem~\ref{the end theorem}) need not be the ideal sheaf of
  \(\Delta_r(\Cl_r)\cap F(Z)\).  Note also that in this case the centre ideal is
  not uniquely determined, although the blow up map is. This can
  happen because of the singularities in \(F(Z)\).\\

  Given a line \(R\) in \(\PP^3\), choose coordinates \([x:y:z:t]\) such that
  \(R=V(x,y)\).  We are interested in the family given by the pencil of planes
  \(\{\alpha y-\beta x =0\}\), where \([\alpha:\beta]\in \PP^1\) and whose total
  space is \(\mathcal{S}:=\Bl_R(\PP^3)\).

  Any line in \(\PP^3\) which does not meet \(R\) determines a section of
  \(\pi\).
  There is an open subscheme \(U_1\) of \(\Cl_0\) isomorphic to \(\mathbb{A}^4\)
  in such a way that each point \((a,b,c,d)\in\mathbb{A}^4\) is mapped to the
  line
  \[
    L_{(a,b,c,d)}:=V\left(
      \begin{array}{c}
        ax+by+z \\
        cx+dy+t \\
      \end{array}\right) \subseteq \mathcal{S}.
  \]
  In this example we consider all the objects of Section~\ref{family blowups}
  restricted over \(U_1\) instead of the whole \(\Cl_0\).  The morphism
  \(\apl{\rho\!:\! U_1\times U_1\times B }{U_1\times \mathcal{S}}\) is
  \[
    \rho\left((a,b,c,d),(a',b',c',d'),[u:v]\right) =
  \]
  \[
    = \left((a,b,c,d),[u:v:-a'u-b'v:-c'u-d'v],[u:v]\right).
  \]
  Let \(Y\) be \(V((a-a')(d-d')-(b-b')(c-c'))\subset U_1\times U_1\).  Given
  \(p\not=p'\in U_1\), the line \(L_p\) meets \(L_{p'}\) if and only if
  \((p,p')\in Y\) and in this case they meet always at a simple point. The
  flattening stratification of the morphism \(\apl{I}{U_1\times U_1}\) is
  \[U_1\times U_1=\Delta_0(\Cl_0)\sqcup \left(Y\setminus
      \Delta_0(\Cl_0)\right)\sqcup Y^c\]

  We now focus on the irreducible component \(Z\) of \(\Cl_1\) dominating the
  stratum \(Y\setminus\Delta_0(\Cl_0)\).  In fact, \(F(Z)=Y\) is a singular
  quadric in \(\mathbb{A}^8\), with singular locus \(\Delta_0(\Cl_0)\).

  If \([\nu:\mu]\) are the coordinates of a \(\PP^1\), the blow up
  \(\mathcal{S}^0_1\) of \(\mathcal{S}^0_0\) at \(V(ax+by+z,cx+dy+t)\), the image
  of \(\alpha_0^0\), is
  \[V\left(\mu(ax+by+z)-\nu(cx+dy+t)\right)\subseteq U_1\times \mathcal{S}\times
    \PP^1.\]

  The morphism \(\rho\) restricted to the stratum
  \((Y\setminus\Delta_0(\Cl_0))\times \PP^1\) extends to \(\mathcal{S}_1^0\).
  Over the coordinates \([\nu:\mu]\), it is \([a-a':b-b']\) or
  \([c-c':d-d']\). If both expressions are well defined, they are equal since
  \((a,b,c,d,a',b',c',d')\) belongs to \(Z\).

  If \([\omega:\eta]\) are coordinates of a \(\PP^1\), the blow up of
  \(U_1\times U_1\) at the ideal \((a-a',b-b')\) is
  \[
    V(\eta(a-a')-\omega(b-b'))\subseteq U_1\times U_1 \times \PP^1.
  \]
  The strict transform \(\tilde{Y}\) of \(Y\) under this blow up is a small
  resolution of \(Y\).  The morphism \(\rho\) extends over all
  \(\apl{\tilde{Y}\times \PP^1}{\mathcal{S}^0_1}\), over the coordinates
  \([\nu:\mu]\) it is just \([\omega:\eta]\).  That implies \(\tilde{Y}\cong Z\),
  because any two distinct points of \(\tilde{Y}\) give two distinct sections of
  \(\mathcal{S}^0_1\).  Observe that the ideal is not unique: the ideal
  \((c-c',d-d')\)
  works too.
\end{example}

\bibliographystyle{apalike}

\bibliography{bib2,grothendieck}

\end{document}